\documentclass[journal]{IEEEtran}

\usepackage{amsmath,amsfonts,amsthm,amssymb,mathrsfs,bbm}

\usepackage{algorithm}
\usepackage{algpseudocode}

\usepackage{array,booktabs,multirow}
\usepackage{enumitem}
\usepackage{graphicx}
\usepackage[caption=false,font=normalsize,labelfont=sf,textfont=sf]{subfig}

\usepackage{textcomp}
\usepackage{stfloats}
\usepackage{url}
\usepackage{verbatim}

\usepackage{cite}
\usepackage{cancel}

\usepackage{balance}
\usepackage{subfig}

\algrenewcommand\algorithmicrequire{\textbf{Input:}}
\algrenewcommand\algorithmicensure{\textbf{Output:}}

\def\N			{\mathbb N}
\def\Z			{\mathbb Z}
\def\R			{\mathbb R}
\def\C			{\mathbb C}

\def\Fourier	{\mathcal F}

\def\Mod		{\mathscr M}

\def\ind		{\mathbbm 1}

\def\Cont		{\mathscr C}
\def\Lebesgue	{\mathrm L}
\def\PW			{\mathsf{PW}}

\def\d			{\mathrm d}
\def\e			{\mathrm e}
\def\i			{\mathrm i}
\def\T			{\mathrm T}

\def\bfV		{\mathbf{V}}
\def\bfc		{\mathbf{c}}

\def\bfr		{\mathbf{r}}
\def\bfs		{\mathbf{s}}

\def\Ell		{\mathbb L}
\def\I			{\mathbb I}
\def\A			{\mathcal A}
\def\B			{\mathcal B}

\def\P			{\mathcal P}
\def\S			{\mathcal S}

\def\OF			{\mathsf{OF}}

\DeclareMathOperator{\sinc}{sinc}
\DeclareMathOperator{\supp}{supp}
\DeclareMathOperator{\sgn}{sgn}

\DeclareMathOperator*{\argmax}{arg\,max}
\DeclareMathOperator*{\argmin}{arg\,min}

\newtheorem{definition}{Definition}
\newtheorem{proposition}{Proposition}
\newtheorem{theorem}{Theorem}

\newtheorem{assumption}{Assumption}

\begin{document}
	
\title{Generalized Modulo Hysteresis Encoding}
	
\author{Matthias Beckmann and J{\"u}rgen Jeschke%
\thanks{This work was supported by Deutsche Forschungsgemeinschaft (DFG) - Project number 530863002.}%
\thanks{M.~Beckmann is with the Dept.\ of Mathematics, University of Hamburg, Germany~(Email: \texttt{research@mbeckmann.de}).}%
\thanks{J.~Jeschke is with the Center for Industrial Mathematics, University of Bremen, Germany (Email:\texttt{jjeschke@uni-bremen.de}).}%
\thanks{The authors are listed in alphabetical order.}}

\maketitle
	
\begin{abstract}
	Unlimited sensing and its extension via modulo hysteresis provide an efficient encoding scheme for high dynamic range signals by folding the signal's amplitude into the range of the analog-to-digital converter prior to sampling.
	Motivated by shortcomings of the modulo hysteresis model, in this work we introduce generalized modulo encoders that flexibly design the folding by incorporating a general transition function and allowing the folding times to depend on the nonideality of the folding transition.
	We rigorously study mathematical properties of the respective operators and derive conditions under which a bandlimited signal is uniquely determined by its generalized modulo samples.
	Finally, we propose a recovery algorithm backed by mathematical guarantees and illustrate our theoretical results through numerical experiments.
\end{abstract}
	
\begin{IEEEkeywords}
	Unlimited sensing, modulo encoding, generalized modulo operator, Shannon sampling theory, sparse recovery
\end{IEEEkeywords}

\section{Introduction}

By the classical Shannon sampling theorem~\cite{Shannon1948} any band\-limited signal $g$ with bandwidth $\Omega \, \tfrac{\mathrm{rad}}{\mathrm{s}}$ is uniquely determined by equispaced samples $\{g(n\T) \mid n \in \Z\}$ with sampling rate $\T > 0$ if sampled at or above Nyquist rate, i.e., if $\T \leq \tfrac{\pi}{\Omega}$.
Moreover, recovery is achieved by employing the well-known Whittaker-Shannon-Kotelnikov interpolation formula
\begin{equation*}
	g(t) = \sum\nolimits_{n \in \Z} g(n\T) \, \sinc\bigl(\tfrac{\pi}{\T}(t-n\T)\bigr).
\end{equation*}
In hardware, this fundamental principle is implemented by analog-to-digital converters (ADCs)~\cite{Walden1999}.
By design, conventional ADCs operate at a fixed dynamic range and can only capture a quantized signal depending on the available bit-budget resulting in inevitable quantization noise.
This imposes a fundamental limit on the digital resolution leading to an intrinsic trade-off between resolution and dynamic range.
In particular, if the input signal exceeds the capacity of the ADC, it leads to saturation or clipping and, hence, a permanent loss of information, which compromises the reconstruction quality.

To account for these limitations, the Unlimited Sampling Framework (USF) has been introduced in \cite{Bhandari2017, Bhandari2020, Bhandari2021}, a fundamentally different sampling paradigm based on the observation that any function can be decomposed into its \emph{integer part}, corresponding to the quantized signal, and its \emph{fractional part}, representing the quantization noise.
The key enabler is then the following mathematical insight: \emph{for smooth functions, their fractional part encodes the integer part}~\cite{Zhu2025a}.
Hence, USF suggests to capture the fractional part by folding the input signal prior to sampling into the dynamic range of the ADC by means of modulo arithmetic.
More precisely, the \emph{ideal modulo encoder} $\Mod_\lambda$ with threshold $\lambda > 0$ is for $g\colon \R \to \R$ defined via
\begin{equation*}
\Mod_\lambda g(t) = g(t) - 2\lambda \Bigl\lfloor\frac{g(t) + \lambda}{2\lambda}\Bigr\rfloor,
\end{equation*}
where $\lfloor\cdot\rfloor$ denotes the floor function and $\Mod_\lambda g(t) \in [-\lambda,\lambda]$.
The Unlimited Sampling Theorem~\cite[Theorem 1]{Bhandari2017} then states that $g \in \PW_\Omega$ can be recovered (up to an additive constant) from $\{\Mod_\lambda g(n\T) \mid n \in \Z\}$ if $\T < \frac{1}{\Omega\e}$.
The first modulo ADC prototype was introduced in~\cite{Bhandari2021} and since then several hardware implementations have been proposed, e.g.~\cite{Mulleti2023, Zhu2024, Zhu2025, Zhu2025a, Guo2025, Li2026}, along with various reconstruction approaches, see~\cite{Rudresh2018, Romanov2019, Weiss2022, Azar2022, Shah2023, Guo2023, Azar2025, Patricio2025, Kobayashi2026}.
Moreover, USF has been extended to diverse applications like, for instance, imaging~\cite{Bhandari2020a, Zhou2020, Monroy2026}, communication~\cite{Ordonez2021, Liu2023, Liu2025}, electro\-encephalography~\cite{Geng2023}, computerized tomography~\cite{Beckmann2022, Beckmann2024, Beckmann2025} and radar~\cite{Feuillen2023, Feuillen2025, Zhang2025}.

\begin{figure}
	\centering
	\includegraphics[width=\linewidth]{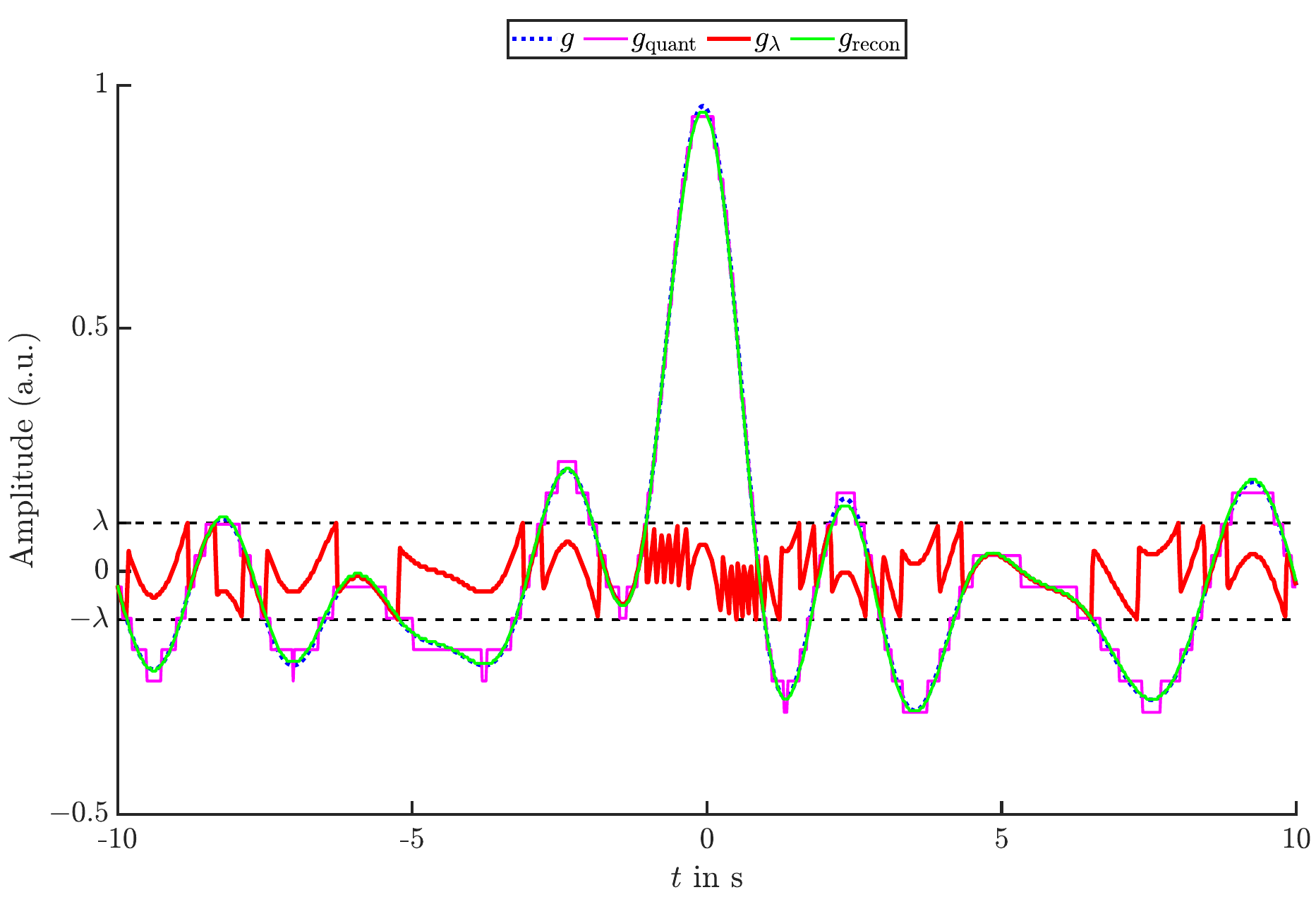}
	\caption{Illustration of generalized modulo encoding.
	Capturing the quantized signal $g_{\mathrm{quant}}$ with $5~\text{bits}$ yields a mean squared error of $3.6074 \cdot 10^{-4}$.
	As opposed to this, capturing the modulo signal $g_\lambda$ with the same bit-budget allows for the reconstruction $g_{\mathrm{recon}}$ with mean squared error of $3.2712 \cdot 10^{-5}$.}
	\label{fig:Mod_recon_quantized}
\end{figure}

The ideal modulo encoder $\Mod_\lambda$ assumes an instantaneous fold whenever the input signal reaches the dynamic threshold $\pm\lambda$.
However, as argued in~\cite{Florescu2022} such exact folding transitions cannot be realized due to hardware imperfections.
As opposed to this, in~\cite{Florescu2022, Florescu2022a, Florescu2025} the modulo hysteresis encoder $\Mod_H$ with $H = (\lambda,h,\alpha)$ is introduced, which accounts for non-instantaneous folds by incorporating a hysteresis parameter $h \geq 0$ that models an imperfect alignment of the reset threshold and the post-reset value, and a transient parameter $\alpha \geq 0$ accounting for an inexact folding transition.
However, $\Mod_H$ has the following limitations that motivate further refinement:
\begin{enumerate}[label = \Roman*)]
\item $\Mod_H g$ is not always guaranteed to stay in $[-\lambda,\lambda]$;\label{item:P1}
\item folding transition is modelled as a straight line;\label{item:P2}
\item folding times are independent of the transient nonideality.\label{item:P3}
\end{enumerate}

In our preliminary work~\cite{Beckmann2025a} we addressed \ref{item:P1} by proposing the modified modulo hysteresis operator $\Mod_\lambda^{h,\alpha}$, which makes the folding positions dependent on the folding transition, as also observed in modulo hardware in~\cite{Zhu2025}.
In its definition the folding transition is still modelled as a linear function.
As this is a rather restrictive assumption that appears unrealistic in practical implementations, in this paper we generalize the operators $\Mod_H$ and $\Mod_\lambda^{h,\alpha}$ by incorporating a general folding function $j_{\alpha}$.
To emphasize their origins, we reuse the notation $\Mod_H$ with $H = (\lambda,h,j_{\alpha})$ for the former and $\Mod_\lambda^{h,j_\alpha}$ for the latter.
As before, we will see that $\Mod_H g$ may exceed the range $[-\lambda,\lambda]$, while $\Mod_\lambda^{h,j_\alpha} g$ is guaranteed to stay in $[-\lambda,\lambda]$ under assumptions only on $j_\alpha$.
To allow for a larger class of folding functions, we introduce a reset time $\sigma>0$ to $\Mod_\lambda^{h,j_\alpha}$ as an additional model parameter and will see that the resulting operator $\Mod_{\lambda,\sigma}^{h,j_\alpha}$ exceeds the range $[-\lambda,\lambda]$ in a controlled way, which has also been reported for some modulo ADC prototypes in~\cite{Zhu2025,Li2026,Florescu2022}.
We study the mathematical properties of the three new operators we refer to as \emph{generalized modulo hysteresis operators}.
To this end, we focus on Lipschitz continuous input functions, which include the classical bandlimited functions, but also other function classes studied in the literature like B-splines, recently discussed in~\cite{Guo2025a}.
Our generalized modulo hysteresis operators lead to a \emph{perturbed} decomposition of the signal $g$ into integer and fractional part.
Under the additional assumption that $g$ is bandlimited, we formulate conditions under which the signal can be identified by its perturbed fractional part $g_\lambda$.
For inversion we then describe a generic recovery approach that can be adapted to the specific model to improve reconstruction quality.
This yields a flexible and effective encoding scheme for bandlimited signals, which is illustrated in Fig.~\ref{fig:Mod_recon_quantized} using only $5$ bits for encoding.

\paragraph*{Contribution}
We advance the existing art and theory of generalized modulo encoders by
\begin{enumerate}[label = \roman*)]
\item developing flexible models for modulo hysteresis with general folding transition functions that can be adapted to pre-described desired properties of the encoding and its actual hardware implementation,\label{item:C1}
\item rigorously analysing the mathematical properties of the introduced generalized modulo hysteresis operators and conditions that guarantee
\begin{itemize}
    \item bounded range and folding at $\pm \lambda$,
    \item separation of folds,
    \item return property for sufficiently large $t \in \R$,
\end{itemize}\label{item:C2}
\item proving identifiability conditions for bandlimited functions from generalized modulo samples with minimal oversampling,\label{item:C3}
\item describing a generic reconstruction algorithm from discrete data based on orthogonal matching pursuit, which is applicable to all settings.\label{item:C4}
\end{enumerate}
Our accurate modelling in \ref{item:C1} allows us to adapt the reconstruction algorithm in \ref{item:C4} to the respective generalized modulo operator, which leads to improved recovery results especially if the transient parameter is larger than the sampling rate.

\paragraph*{Paper overview}

This paper is organized as follows.
In Section~\ref{sec:GenMod} we introduce and analyze our novel generalized modulo hysteresis encoders.
In Section~\ref{sec:comparison} we compare the different models and summarize the mathematical properties in Table~\ref{tab:Mod_comparison}.
Section~\ref{sec:reconstruction} is devoted to our recovery approach with theoretical guarantees and adaptation to the respective modulo operators.
In Section~\ref{sec:numerics} we provide numerical experiments which showcase the methods developed in this paper.
Finally, we summarize our results and point to future research directions in Section~\ref{sec:conclusion}.

\section{Generalized Modulo Hysteresis} Operators \label{sec:GenMod}

We start by recalling the generalized modulo encoder $\Mod_{H}$ from \cite{Florescu2022}, where $H = (\lambda,h,\alpha)$ with modulo threshold $\lambda$, hysteresis parameter $h$ and transient parameter $\alpha$.
For a real-valued function $g \equiv g(t)$ it is defined via a sequence of folding points $(\tau_p)_{p\in\N}$, where $\tau_1 = \inf\{t > \tau_0 \mid \Mod_\lambda (g(t) + \lambda) = 0\}$ and $\tau_{p+1} = \inf\{t > \tau_p \mid \Mod_\lambda(g(t) - g( \tau_p) + h s_p) = 0\}$ with $s_p = \sgn(g(\tau_p) - g(\tau_{p-1}))$.
Then, for $t \in \R$ the output is given by $\Mod_H g(t) = g(t) - (2\lambda - h) \sum_{p \in \N} s_p \, \varepsilon_\alpha(t - \tau_p)$ with $\varepsilon_0(t) = \ind_{[0, \infty)}(t)$ and $\varepsilon_\alpha(t) = \frac{t}{\alpha} \, \ind_{[0, \alpha)}(t) + \ind_{[\alpha, \infty)}(t)$.

\begin{figure}
	\centering
	\includegraphics[width=\linewidth]{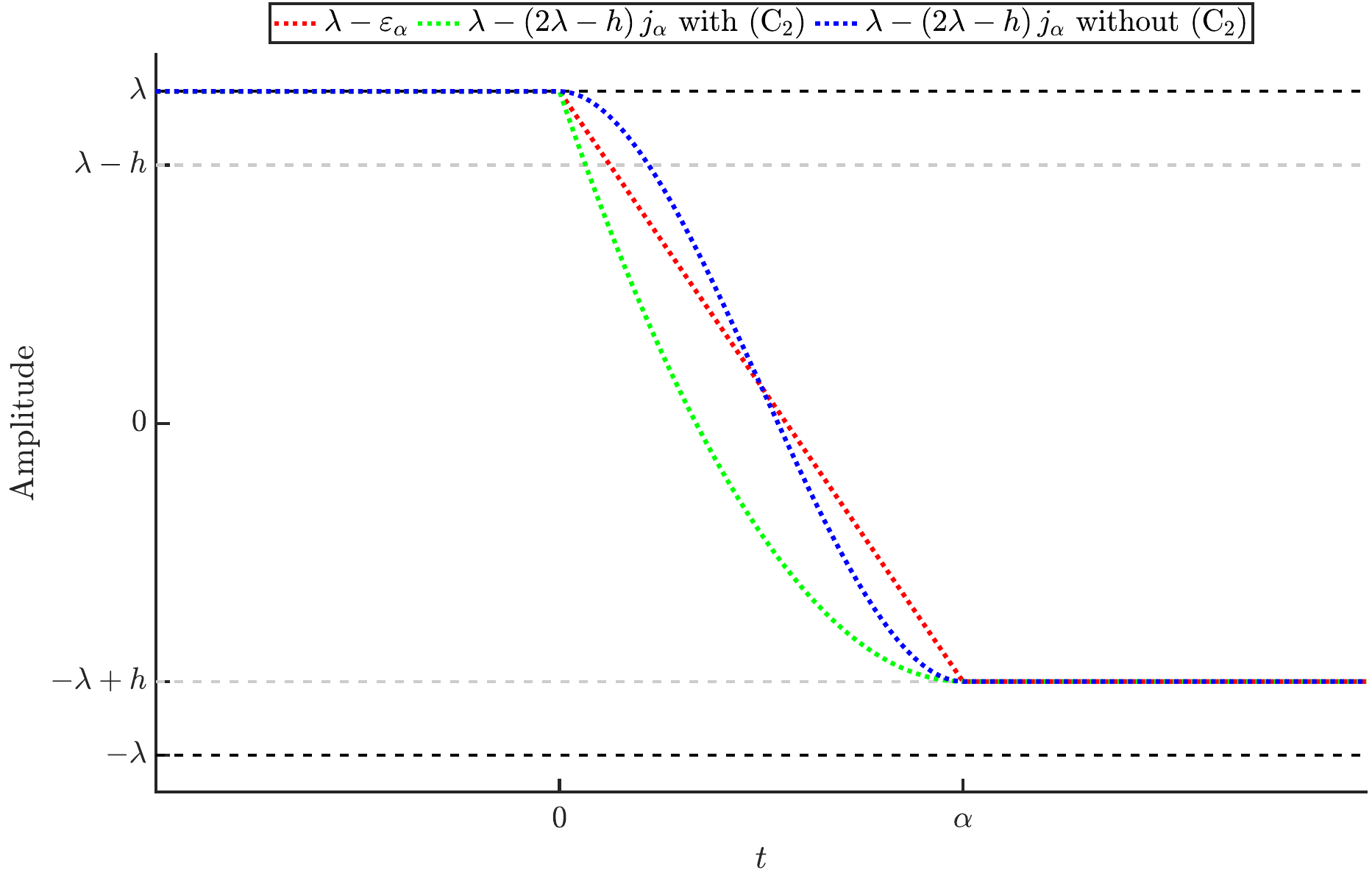}
	\caption{Illustration of the transition function in~\cite{Florescu2022, Beckmann2025a} (red) and a folding function from Definition~\ref{def:fold_fun} satisfying~\eqref{eq:fold_con2} (green) and violating~\eqref{eq:fold_con2} (blue).}
	\label{fig:FoldFun_example}
\end{figure}

To account for limitation \ref{item:P2}, we now generalize this operator by incorporating a function that models the way how a non-instantaneous fold with transient $\alpha$ is applied.
In the following, this function is referred to as {\em folding function}, see Fig.~\ref{fig:FoldFun_example} for illustration.	

\begin{definition} \label{def:fold_fun}
	A function $ j_\alpha \colon \R \to \R $ with transient $\alpha \geq 0$ is called {\em folding function} if $j_0 = \ind_{[0, \infty)}$ and, for $\alpha > 0$, $j_\alpha$ is continuous with
	\begin{gather*}
		\forall \, t \leq 0 \colon ~ j_{\alpha}(t) = 0, \\
		\forall \, t \geq \alpha \colon ~ j_{\alpha}(t) = 1.
	\end{gather*}
	When stated explicitly, we additionally assume that
	\begin{equation}\label{eq:fold_con1}
		\forall \, t \in [0,\alpha] \colon ~ \frac{t}{\alpha} \leq j_{\alpha}(t) \leq 1
		\tag{$\text{C}_1$}
	\end{equation}
	and that the right derivative $\partial_+ j_\alpha$ exists on $[0,\alpha]$ with
	\begin{equation}\label{eq:fold_con2}
		\forall \, s \leq t \in [0,\alpha] \colon ~ 0 \leq \partial_+ j_{\alpha}(t) \leq \partial_+ j_{\alpha}(s).
		\tag{$\text{C}_2$}
	\end{equation}
\end{definition}

\subsection{Transient independent folding points}

In the following, we consider the triplet $H = (\lambda,h,j_\alpha)$ and modify the definition of the generalized modulo encoder $\Mod_H$ accordingly.
To ensure well-definedness, we restrict ourselves to the function class
\begin{equation*}
	\Cont_{\lambda,\tau_0}^{0,1} = \bigl\{g \in \Cont^{0,1}(\R) \bigm| |g(t)| < \lambda ~ \forall \, t \leq \tau_0\bigr\}
\end{equation*}
with arbitrary $\tau_0 \in \R$, where $\Cont^{0,1}(\R)$ denotes the space of Lipschitz continuous functions on $\R$, which are differentiable almost everywhere with essentially bounded derivative, whose essential supremum yields a Lipschitz constant of the function.

\begin{definition} \label{def:Mod_H}
	Let $\lambda > 0$, $h \in (0, 2\lambda)$ and $j_\alpha$ be a folding function with transient $\alpha \geq 0$.
	For a function $g \in \Cont_{\lambda,\tau_0}^{0,1}$ with $\tau_0 \in \R $ let $(\tau_p)_{p \in \N}$ be given by
	\begin{equation} \label{eq:Mod_H_tau}
		\begin{gathered}
			\tau_1 = \inf\bigl\{t > \tau_0 \bigm| \Mod_\lambda(g(t)+ \lambda) = 0\bigr\}, \\
			\tau_{p+1} = \inf\bigl\{t > \tau_{p} \bigm| \Mod_\lambda(g(t) - g(\tau_p) + h s_p) = 0\bigr\},
		\end{gathered}
	\end{equation}
	where $s_p = \sgn(g(\tau_p) - g(\tau_{p-1}))$ if $\tau_p < \infty$ and $\inf \emptyset = \infty$.
	Then, the output of the {\em generalized modulo encoder} $\Mod_H$ with $H = (\lambda,h,j_\alpha)$ is defined as
	\begin{equation} \label{eq:Mod_H}
		\Mod_H g (t) = g(t) - (2 \lambda -h) \sum\nolimits_{\tau_p < \infty} s_p \, j_{\alpha}(t-\tau_{p}).
	\end{equation}
\end{definition}

Note that choosing $j_\alpha = \varepsilon_\alpha$ in Definition~\ref{def:Mod_H} yields the same generalized modulo encoder as in~\cite{Florescu2022}, which justifies the clash of notation.
We now study elementary properties of $\Mod_H$ and begin with its well-definedness by proving that the series in~\eqref{eq:Mod_H} converges pointwise.

\begin{theorem} \label{theo:Mod_H_separation}
	Let $g \in \Cont_{\lambda,\tau_0}^{0,1}$.
	Then, the folding points $(\tau_p)_{p \in \N}$ satisfy the separation property
	\begin{equation*}
		\tau_p < \infty
		\implies
		\tau_{p+1} - \tau_p \geq \frac{\min\{h, 2\lambda-h\}}{\|g^\prime\|_\infty} > 0
	\end{equation*}		
	and the series $\sum\nolimits_{p \in \N} s_p \, j_{\alpha}(t-\tau_{p})$ converges for all $t \in \R$.
\end{theorem}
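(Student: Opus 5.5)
The plan is to first understand the zero set of $\Mod_\lambda(x)$, then derive the separation bound from it, and finally read off convergence. Since $\Mod_\lambda x = x - 2\lambda\lfloor (x+\lambda)/(2\lambda)\rfloor$, we have $\Mod_\lambda x = 0$ if and only if $x \in 2\lambda\Z$. The defining condition for $\tau_{p+1}$ therefore says that $g(t) - g(\tau_p) + h s_p \in 2\lambda\Z$.

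\textbf{Separation.} Assume $\tau_p<\infty$, and put $x_p(t) = g(t)-g(\tau_p)+hs_p$. At $t=\tau_p$ we have $x_p(\tau_p) = h s_p$. Because $h\in(0,2\lambda)$ and $s_p\in\{\pm1\}$, the distance from $hs_p$ to the nearest point of $2\lambda\Z$ is
\[
\min\{h,2\lambda-h\}.
\]
For $s_p=1$ the nearest points are $0$ and $2\lambda$; for $s_p=-1$ they are $0$ and $-2\lambda$.

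Two details need checking here. First, $s_p\neq 0$: I would show $g(\tau_p)\neq g(\tau_{p-1})$, since by continuity and the definition of the infimum, $g(\tau_p)-g(\tau_{p-1})+hs_{p-1}\in 2\lambda\Z$. Second, the base case: for $p=1$, $g(\tau_1)=\pm\lambda$ and $|g(\tau_0)|<\lambda$, so $s_1$ is well defined and nonzero. The case where $g(\tau_p)-g(\tau_{p-1}) = -hs_{p-1}$ (difference landing at $0$) requires care. I would argue that this can occur only if $g$ returns to the offset level, and treat it by noting that the relevant value is $x_p(\tau_p)=hs_p$ no matter which sign $s_p$ takes, so only $s_p\neq0$ matters. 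If $s_p$ could be $0$, then $x_p(\tau_p)=0$ and $\tau_{p+1}=\tau_p$ would break separation. I expect this degenerate sign case to be the main obstacle, and I would rule it out by an induction on $p$ using $h<2\lambda$.

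Given $s_p\neq 0$, the Lipschitz property gives
\[
|x_p(t)-x_p(\tau_p)| = |g(t)-g(\tau_p)| \le \|g'\|_\infty (t-\tau_p).
\]
Hence $x_p(t)\notin 2\lambda\Z$ whenever $t-\tau_p < \min\{h,2\lambda-h\}/\|g'\|_\infty$. Since $g$ is continuous, the set $\{t>\tau_p : x_p(t)\in 2\lambda\Z\}$ is closed relative to $(\tau_p,\infty)$, so its infimum is at least this bound, which proves the separation estimate. If $\|g'\|_\infty=0$, then $g$ is constant with $|g|<\lambda$, so $\tau_1=\infty$ and there is nothing to prove. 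The bound is strictly positive because $h\in(0,2\lambda)$.

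\textbf{Convergence.} The finite $\tau_p$ are strictly increasing, with gaps bounded below by a constant $\delta>0$. So only finitely many of them lie in any bounded interval, and in fact $\tau_p\ge\tau_1+(p-1)\delta$. Fix $t\in\R$. For every $p$ with $\tau_p > t$ we have $t-\tau_p<0$, so $j_\alpha(t-\tau_p)=0$. Terms with $\tau_p=\infty$ are excluded by definition. The series therefore reduces to a finite sum over the indices $p$ with $\tau_p\le t$, of which there are at most $1+(t-\tau_1)/\delta$, and so it converges for every $t\in\R$.
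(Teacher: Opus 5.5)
Your proposal is correct and follows essentially the same route as the paper: the distance from $hs_p$ to the zero set $2\lambda\Z$ of $\Mod_\lambda$ is $\min\{h,2\lambda-h\}$ (the paper phrases this as $g(\tau_{p+1})-g(\tau_p)+hs_p\in\{0,2\lambda s_p\}$), the Lipschitz bound turns this into the time separation, and the series reduces to a finite sum because $j_\alpha(t-\tau_p)=0$ whenever $\tau_p>t$. The degenerate case $s_p=0$ that you flag as the main obstacle, and which the paper passes over silently, is settled in one line: by attainment of the infimum, $g(\tau_p)-g(\tau_{p-1})\in -hs_{p-1}+2\lambda\Z$, and this set does not contain $0$ because $0<h<2\lambda$.
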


\begin{proof}
	Let $\tau_{p} < \infty$ and assume that $\tau_{p+1} < \infty$.
	Otherwise, the separation is trivially true.
	By definition, $\tau_{p+1}$ is the first point larger than $\tau_p$ which satisfies $\Mod_\lambda(g(\tau_{p+1}) - g(\tau_p) + h s_p) = 0$.
	Because of the continuity of $g$ this is the first point where $g(\tau_{p+1}) - g( \tau_p) + h s_p = 0$ or $g(\tau_{p+1}) - g(\tau_p) + h s_p = 2\lambda s_p$.
	Therefore, this implies that 
	\begin{equation*}
		\begin{aligned}
			|g(\tau_{p+1}) - g( \tau_p)| \geq \min\{h, 2\lambda-h\}.
		\end{aligned}
	\end{equation*}
	As $g$ is Lipschitz continuous, the difference $|g(\tau_{p+1}) - g(\tau_p)|$ can be bounded via $|g(\tau_{p+1}) - g(\tau_p)| \leq \|g^\prime\|_\infty \, (\tau_{p+1} - \tau_p)$.
	With this, we obtain the stated separation
	\begin{equation*}
		\begin{aligned}
			\tau_{p+1}  - \tau_p \geq \frac{ \min \{ h, 2\lambda -h \}  }{ \| g' \|_{\infty} } .
		\end{aligned}
	\end{equation*}
	Now, for fixed $t \in \R$ there exists $q \in \N$ such that $t \leq \tau_{q+1}$, e.g., $q = \max\bigl\{0 ,\bigl\lceil (t - \tau_0) \, \frac{\|g'\|_\infty}{\min\{h, 2\lambda-h\}} \bigr\rceil\bigr\}$.
	Therefore, the series
	\begin{equation*}
		\sum\nolimits_{p \in \N} s_p \, j_{\alpha}(t-\tau_{p}) = \sum\nolimits_{p = 1}^{q+1} s_p \, j_{\alpha}(t-\tau_{p}).
	\end{equation*}
	reduces to a finite sum and, hence, converges.
\end{proof}

Theorem~\ref{theo:Mod_H_separation} shows that $\Mod_H$ is well-defined on $\Cont_{\lambda,\tau_0}^{0,1}$.
We now study conditions to guarantee that $\Mod_H g$ returns to $g$ for sufficiently large $t \in \R$, see also Fig.~\ref{fig:Mod_H_return}.
To this end, for $\delta > 0$ and $\rho \in \R$ we consider the function class
\begin{equation*}
	\B^\delta_\rho = \bigl\{g\colon \R \to \R \bigm| |g(t)| < \delta ~ \forall \, t \geq \rho\bigl\}.
\end{equation*}
	
\begin{figure}
	\centering
	\includegraphics[width=\linewidth]{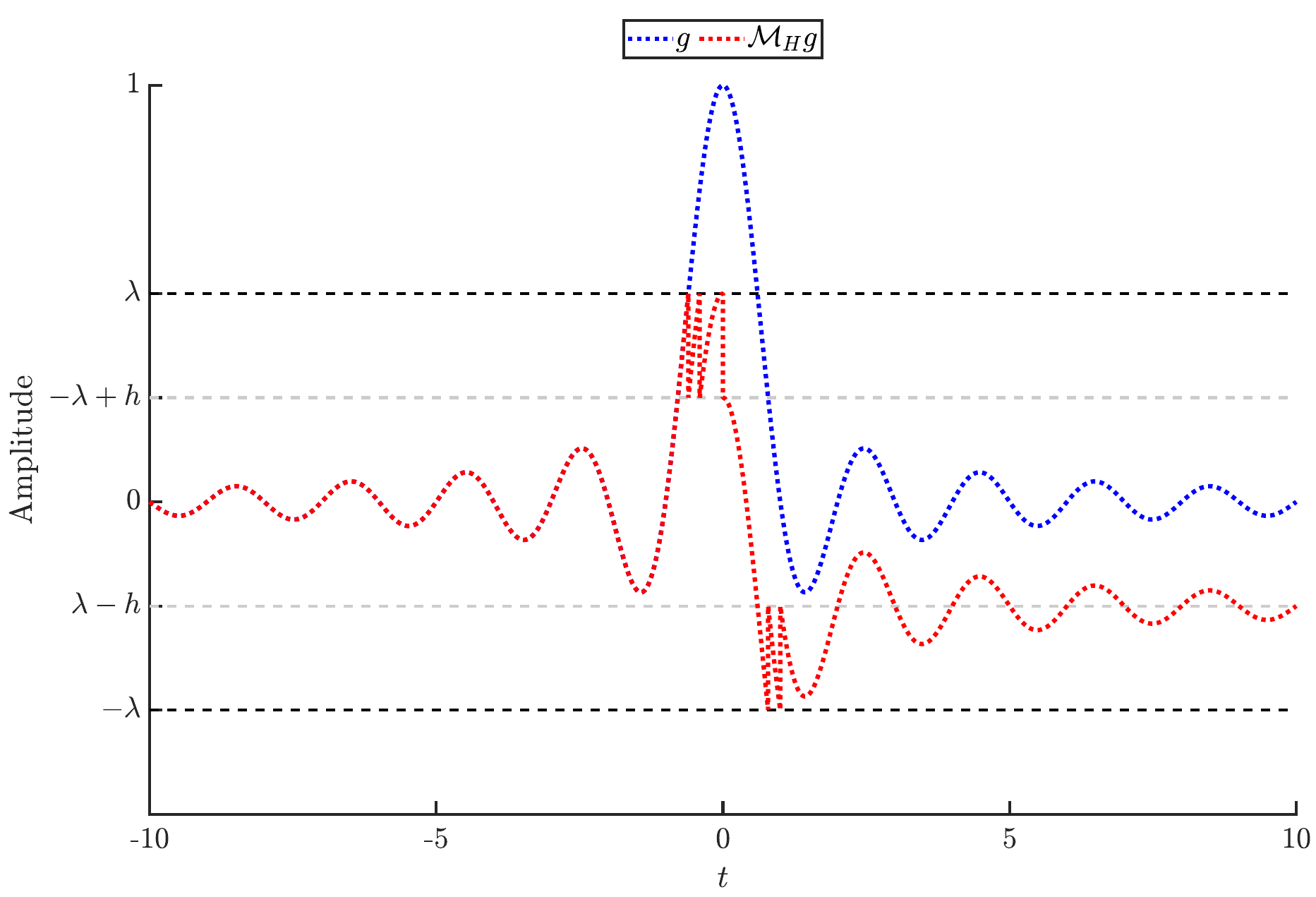}
	\caption{Illustration of the failure of the return property in the case $h > \lambda$ using $g(t) = \sinc(t)$, where we choose $\lambda = 0.5$, $h = 0.75$ and $\alpha = 0$.}
	\label{fig:Mod_H_return}
\end{figure}

\begin{proposition} \label{prop:Mod_H_return}
	Let $0<h<\lambda$ and $g \in \Cont^{0,1}_{\lambda,\tau_0} \cap \B^{\lambda-h}_\rho$ with $\rho > \tau_0$.
	Then, $\Mod_H g(t) = g(t)$ for all $t \geq \rho + \alpha$.
\end{proposition}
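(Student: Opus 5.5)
The plan is to compare $\Mod_H g$ with an idealized folded trajectory that uses only the signs $s_p$ and ignores the transient. The argument rests on an invariant: this trajectory always stays in $[-\lambda,\lambda]$. Combined with $|g|<\lambda-h$ on $[\rho,\infty)$, the invariant forces the net number of folds to vanish and rules out any fold after $\rho$.

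First I would introduce the counting function $k(t) = \sum_{\tau_p \le t} s_p$ and the ideal folded signal $y(t) = g(t) - (2\lambda-h)\,k(t)$. This is $\Mod_H g$ with $j_0$ in place of $j_\alpha$, but keeping the same folding points $\tau_p$; those points do not depend on $\alpha$ by \eqref{eq:Mod_H_tau}. By Theorem~\ref{theo:Mod_H_separation}, only finitely many $\tau_p$ lie in any bounded interval, so $k$ is a well-defined, piecewise constant, right-continuous integer-valued function.

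The key step is to prove by induction on $p$ that $y(t)\in[-\lambda,\lambda]$ for all $t$, and that folds occur exactly when $y$ reaches $\pm\lambda$.

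For the base case, $|g|<\lambda$ on $(-\infty,\tau_0]$, and $\tau_1$ is the first time after $\tau_0$ with $g(\tau_1)=\pm\lambda$. Hence $y=g\in(-\lambda,\lambda)$ before $\tau_1$, and $s_1=\sgn(g(\tau_1)-g(\tau_0))=\sgn g(\tau_1)$. It follows that $y(\tau_1^-)=s_1\lambda$ and $y(\tau_1)=s_1\lambda-(2\lambda-h)s_1=-s_1(\lambda-h)$.

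For the inductive step, assume $y(\tau_p)=-s_p(\lambda-h)$. On $[\tau_p,\tau_{p+1})$ we have $y(t)=-s_p(\lambda-h)+g(t)-g(\tau_p)$. The two conditions defining $\tau_{p+1}$ translate as follows:
\begin{itemize}
	\item $g-g(\tau_p)+hs_p=0$ is equivalent to $y=-s_p\lambda$;
	\item $g-g(\tau_p)+hs_p=2\lambda s_p$ is equivalent to $y=s_p\lambda$.
\end{itemize}
So by continuity $y\in(-\lambda,\lambda)$ on $(\tau_p,\tau_{p+1})$, and at $\tau_{p+1}$ the value $y(\tau_{p+1}^-)$ equals $\pm\lambda$. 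I then need to check that $s_{p+1}=\sgn(g(\tau_{p+1})-g(\tau_p))$ equals $\sgn y(\tau_{p+1}^-)$. This holds because $g(\tau_{p+1})-g(\tau_p)$ equals either $-hs_p$ or $(2\lambda-h)s_p$, and $h,\,2\lambda-h>0$. Consequently $y(\tau_{p+1})=-s_{p+1}(\lambda-h)$, which closes the induction. I expect this bookkeeping of signs, together with the base case that uses $g(\tau_0)$, to be the main obstacle; everything after it is short.

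With the invariant in hand, take $t\ge\rho$, so that $|g(t)|<\lambda-h$.
\begin{itemize}
	\item If $k(t)\neq0$, then $|y(t)|\ge(2\lambda-h)-|g(t)|>\lambda$, which contradicts the invariant. Hence $k(t)=0$ for all $t\ge\rho$.
	\item No fold can occur at any $\tau_p\ge\rho$. Indeed, such a fold requires $|y(\tau_p^-)|=\lambda$. If $\tau_p>\rho$, the left limit is taken over times in $[\rho,\tau_p)$, where $k=0$, so $|g(\tau_p)|=\lambda$, which is impossible. If $\tau_p=\rho$, the invariant still gives $|g(\rho)-(2\lambda-h)k(\rho^-)|=\lambda$. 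The case $k(\rho^-)=0$ is excluded since $|g(\rho)|<\lambda-h$. Otherwise $|y(\rho^-)|>\lambda$, a contradiction.
\end{itemize}

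Therefore all finite folding points satisfy $\tau_p<\rho$, there are finitely many of them, and $\sum_{\tau_p<\infty}s_p=k(\rho)=0$. For $t\ge\rho+\alpha$ we have $t-\tau_p\ge\alpha$, so $j_\alpha(t-\tau_p)=1$ for every such $p$ by Definition~\ref{def:fold_fun}; this also covers $\alpha=0$, where $j_0=\ind_{[0,\infty)}$. Substituting into \eqref{eq:Mod_H} gives $\Mod_H g(t)=g(t)-(2\lambda-h)\,k(\rho)=g(t)$.
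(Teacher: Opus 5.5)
Your proof is correct and follows essentially the same route as the paper. Both compare $\Mod_H g$ with the instantaneous-fold version $\Mod_{H_0} g$, which has the same folding points. Both use its range bound together with $|g|<\lambda-h$ on $[\rho,\infty)$ to force the net fold count to vanish and all folds to lie before $\rho$, and then note that $j_\alpha(t-\tau_p)=1$ once $t\ge\rho+\alpha$. The difference is that you prove the $\alpha=0$ range invariant by induction and explicitly rule out a fold at $\tau_p=\rho$, whereas the paper only asserts both.
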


\begin{proof}
	First consider $\alpha = 0$.
	Then, we can write $\Mod_H g(t) = g(t) + c \, (2 \lambda-h)$ for some $c \in \Z$ dependent on $t$.
	As $g \in \B^{\lambda-h}_\rho$, for $t \geq \rho$ we get $g(t) + c \, (2 \lambda-h) < \lambda - h + c \, (2 \lambda-h)$, which can only be larger than or equal to $-\lambda$ if $c \geq 0$.
	Analogously, we get $g(t) + c \, (2 \lambda-h) > -\lambda + h + c \, (2 \lambda-h)$, which can only be smaller than or equal to $\lambda$ if $c \leq 0$.
	This implies that $c = 0$ so that indeed $\Mod_H g(t) = g(t)$ for all $t \geq \rho$.
	
	To prove the statement for $\alpha > 0$, consider $H_0 = (\lambda,h,j_0)$.
	We have already proven that $\Mod_{H_0} g(t) = g(t)$ for all $t \geq \rho$ and $\tau_p < \rho$ or $\tau_p = \infty$.
	Because $\Mod_{H_0} g(t) = \Mod_H g(t)$ by definition for all $t \notin \bigcup_{\tau_p < \infty} [\tau_p, \tau_p + \alpha) \subset [\tau_0, \rho + \alpha)$ we also have  $\Mod_H g(t) = g(t)$ for all $t \geq \rho + \alpha$.
\end{proof}

If $0 < h< \lambda$ and $g \in \B^{\lambda-h}_\rho$ with $\rho > \tau_0$, Proposition~\ref{prop:Mod_H_return} implies that $\tau_{p} < \rho$ or $\tau_p = \infty$.
This in combination with Theorem~\ref{theo:Mod_H_separation} gives
\begin{equation*}
	\sum\nolimits_{p \in \N} s_p \, j_{\alpha}(t-\tau_{p}) = \sum\nolimits_{p =1}^{q+1} s_p \, j_{\alpha}(t-\tau_{p})
\end{equation*}
with $q = \max\bigl\{0 ,\bigl\lceil(\rho - \tau_0) \, \frac{\|g^\prime\|_{\infty}}{h}\bigr\rceil\bigr\}$ for all $t \in \R$.
Therefore, we can guarantee uniform convergence of $\sum\nolimits_{p \in \N} s_p \, j_{\alpha}(\cdot-\tau_{p})$ for $0 < h < \lambda$ and $g \in \B^{\lambda-h}_\rho$.

\begin{figure}
	\centering
	\includegraphics[width=\linewidth]{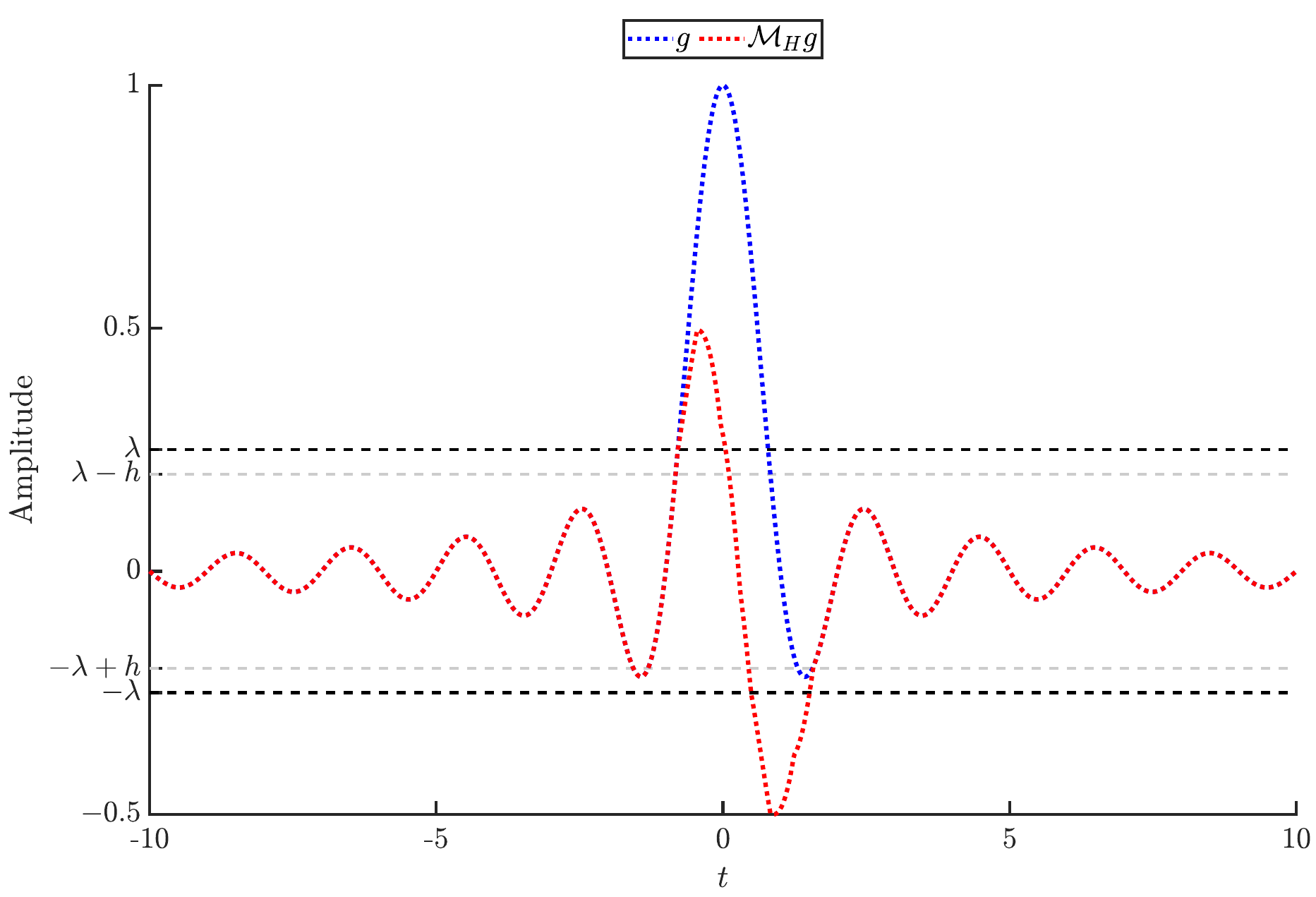}
	\caption{Example for $\Mod_H g (t) \notin [- \lambda, \lambda]$ using $g(t) = \sinc(\pi t)$, where we choose $j_\alpha(t) = \frac{t}{\alpha}$ for $t \in [0,\alpha]$, $\lambda = 0.25$, $h = 0.05$ and $\alpha = 0.75$.}
	\label{fig:Mod_H_exceedance}
\end{figure}

Next, we discuss limitation \ref{item:P1} and study conditions under which $\Mod_H g$ stays in the prescribed range $[-\lambda,\lambda]$.
In the case $\alpha = 0$, the condition $g \in \Cont^{0,1}_{\lambda,\tau_0}$ suffices to guarantee that $|\Mod_H g(t)| \leq \lambda$ for all $t \in \R$ and $|\Mod_{H} g (\tau_p)| = \lambda - h$ if $\tau_{p} < \infty$.
However, by defining the sequence of folding points $(\tau_p)_{p \in \N}$ independently of the transient $\alpha$ it may occur that $\Mod_H g (t) \notin [- \lambda, \lambda]$ and $|\Mod_H g(\tau_p)| \neq \lambda$ if $\alpha > 0$, see Fig.~\ref{fig:Mod_H_exceedance}.
To prevent this, we need additional conditions on the first and second derivative of the input function and consider the function class
\begin{equation*}
	\Cont^{1,1}_{\lambda,\tau_0} = \bigl\{g \in \Cont^{0,1}_{\lambda,\tau_0} \bigm| g^\prime \in \Cont^{0,1}(\R)\bigr\}.
\end{equation*}

\begin{theorem} \label{theo:Mod_H_range}
	Let $\alpha > 0$ and let $j_\alpha$ satisfy~\eqref{eq:fold_con1}.
	Moreover, let $g \in \Cont^{1,1}_{\lambda,\tau_0}$ with $\|g^\prime\|_\infty \leq \frac{2 \lambda - h}{\alpha}$ and $\|g^{\prime\prime}\|_\infty \leq \frac{2 h}{\alpha^2}$.
	Then, we have $|\Mod_H g(t)| \leq \lambda$ for all $t \in \R$.
	If $\tau_0 < \tau_p < \infty$, we further obtain $\tau_{p+1} - \tau_p \geq \alpha$ and $|\Mod_H g(\tau_p)| = \lambda$.
\end{theorem}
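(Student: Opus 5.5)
The plan is an induction over the folding points. Alongside it I carry the invariant
\[
\Mod_H g(\tau_p) = s_p\lambda \qquad\text{and}\qquad s_p\, g'(\tau_p) \geq 0,
\]
and I show that $|\Mod_H g| \leq \lambda$ on $[\tau_p, \tau_{p+1}]$. By symmetry ($g \mapsto -g$ flips all $s_p$) it is enough to treat $s_p = 1$.

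\textbf{Base case.} On $(-\infty, \tau_1)$ there is no fold, so $\Mod_H g = g$. Since $g \in \Cont^{0,1}_{\lambda,\tau_0}$ and $\tau_1$ is the first time after $\tau_0$ with $|g| = \lambda$, we get $|\Mod_H g| \leq \lambda$ there. We also get $g(\tau_1) = \pm\lambda$, and $s_1 = \sgn(g(\tau_1) - g(\tau_0))$ has the sign of $g(\tau_1)$ because $|g(\tau_0)| < \lambda$. The first-hitting property then gives $s_1 g'(\tau_1) \geq 0$, as $g$ approaches the level from inside.

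\textbf{Inductive step on the transient window.} Assume the invariant at $\tau_p$ and assume all earlier transients are completed by $\tau_p$; this follows from the separation $\tau_p - \tau_{p-1} \geq \alpha$ obtained in the previous step. For $u = t - \tau_p \in [0, \alpha]$, and as long as $t < \tau_{p+1}$, we then have
\[
\Mod_H g(t) = \lambda + \bigl(g(t) - g(\tau_p)\bigr) - (2\lambda - h)\, j_\alpha(u).
\]
\emph{Upper bound.} By Lipschitz continuity, $g(t) - g(\tau_p) \leq \|g'\|_\infty u \leq (2\lambda - h)\, u/\alpha$. By \eqref{eq:fold_con1}, $j_\alpha(u) \geq u/\alpha$. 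Together these give $\Mod_H g(t) \leq \lambda$.

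\emph{Lower bound.} Using $j_\alpha \leq 1$ gives $\Mod_H g(t) \geq -\lambda + h + \bigl(g(t) - g(\tau_p)\bigr)$. Taylor expansion with $g'(\tau_p) \geq 0$ and $g'' \geq -2h/\alpha^2$ yields
\[
g(t) - g(\tau_p) \geq -\frac{h u^2}{\alpha^2} \geq -h,
\]
so $\Mod_H g(t) \geq -\lambda$.

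\emph{Separation.} The same two estimates show that no new fold can occur for $u < \alpha$. A fold requires either $g(t) - g(\tau_p) = -h$ or $g(t) - g(\tau_p) = 2\lambda - h$. But for $u < \alpha$ we have $-hu^2/\alpha^2 > -h$ and $(2\lambda - h)\, u/\alpha < 2\lambda - h$. Hence $\tau_{p+1} - \tau_p \geq \alpha$. This also justifies ignoring any later fold inside the window.

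\textbf{Inductive step after the transient.} For $t \in [\tau_p + \alpha, \tau_{p+1}]$ we have $j_\alpha(t - \tau_p) = 1$, so
\[
\Mod_H g(t) = -\lambda + h + \bigl(g(t) - g(\tau_p)\bigr).
\]
By the definition of $\tau_{p+1}$ as a first hitting time, $g(t) - g(\tau_p)$ stays in $[-h, 2\lambda - h]$, so $\Mod_H g(t) \in [-\lambda, \lambda]$. At $\tau_{p+1}$ one of the two boundaries is attained.
\begin{itemize}
\item If $g(\tau_{p+1}) - g(\tau_p) = -h$, then $s_{p+1} = -1$ and $\Mod_H g(\tau_{p+1}) = -\lambda$.
\item If $g(\tau_{p+1}) - g(\tau_p) = 2\lambda - h$, then $s_{p+1} = +1$ and $\Mod_H g(\tau_{p+1}) = \lambda$.
\end{itemize}
In both cases the boundary is reached from inside, so $s_{p+1} g'(\tau_{p+1}) \geq 0$ and the invariant propagates. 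If $\tau_{p+1} = \infty$, the same formula shows the bound for all $t \geq \tau_p + \alpha$.

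\textbf{Main obstacle.} The delicate point is the lower bound in the transient window. There $j_\alpha \leq 1$ alone is too weak unless $g$ cannot drop by more than $h$ within time $\alpha$. This is exactly where the sign information $s_p g'(\tau_p) \geq 0$ from first hitting and the second-derivative bound $2h/\alpha^2$ are needed. Care is also required to justify that the earlier terms of the sum contribute exactly the constant $-(2\lambda - h)\sum_{q<p} s_q$, so that $\Mod_H g(\tau_p) = s_p\lambda$. This uses the separation $\geq \alpha$ established inductively, together with the base case $\tau_0 < \tau_1$.
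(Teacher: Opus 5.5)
Your proof is correct and follows essentially the same route as the paper. The separation $\tau_{p+1}-\tau_p\ge\alpha$ comes from the Lipschitz bound for same-direction folds and from $s_p g'(\tau_p)\ge 0$ plus the bound on $g''$ for reversals. An induction over the folding points then uses \eqref{eq:fold_con1} for the upper bound in the transient window and yields $\Mod_H g(\tau_{p+1})=s_{p+1}\lambda$ at the next fold.

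The only minor difference concerns the lower bound on $[\tau_p,\tau_{p+1})$. The paper gets it directly from the first-hitting constraint $g(t)-g(\tau_p)\ge -h$ together with $j_\alpha\le 1$, so your Taylor estimate, and the ``main obstacle'' remark built on it, is not needed for the range bound. Its real role, in both arguments, is to guarantee the separation, which makes earlier transients complete and the offset at $\tau_p$ exact.
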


\begin{proof}
	We start with showing $\tau_{p+1} - \tau_p \geq \alpha$ for $\tau_0 < \tau_p < \infty$.
	If $\tau_{p+1} < \infty$, we either have $g(\tau_{p+1}) = g(\tau_{p}) - h s_p$ and $s_{p+1} = -s_p$ or $g(\tau_{p+1}) = g(\tau_{p}) + (2\lambda - h) s_p$ and $s_{p+1} = s_p$.
	In the latter case, the assumption $\|g^\prime\|_\infty \leq \frac{2 \lambda - h}{\alpha}$ yields
	\begin{equation*}
		\tau_{p+1} - \tau_p \geq \frac{|g(\tau_{p+1}) - g(\tau_p)|}{\|g^\prime\|_\infty} \geq \alpha.
	\end{equation*}
	In the former case, we have $s_p \cdot g^\prime(\tau_p) \geq 0$ and $\|g^{\prime\prime}\|_\infty \leq \frac{2 h}{\alpha^2}$ yields
	\begin{equation*}
		\tau_{p+1} - \tau_p \geq \sqrt{\frac{2h}{\|g^{\prime\prime}\|_\infty}} \geq \alpha.
	\end{equation*}
	We now prove by induction on $p \in \N$ that $|\Mod_H g(t)| \leq \lambda$ for $t < \tau_p$ and $\Mod_H g(\tau_p) = \lambda s_p$ if $\tau_0 < \tau_p < \infty$.
	Since $\tau_p \to \infty$ for $p \to \infty$ due to Theorem~\ref{theo:Mod_H_separation}, this implies that $|\Mod_H g(t)| \leq \lambda$ for all $t \in \R$.
	By definition of $\tau_1$ we have $|\Mod_H g(t)| \leq \lambda$ for all $t < \tau_1$ as well as $\Mod_H g(\tau_1) = g(\tau_1) = \lambda s_1$ if $\tau_1 < \infty$.
	Now, let $p \in \N$ such that $|\Mod_H g(t)| \leq \lambda$ for $t < \tau_p$ and $\Mod_H g(\tau_p) = \lambda s_p$ if $\tau_p < \infty$.
	For $t \in (\tau_p, \tau_{p+1})$ we can write
	\begin{equation*}
		\Mod_H g(t) = \Mod_H g(\tau_p) - g(\tau_p) + g(t) - (2\lambda - h) \, s_p \, j_\alpha(t - \tau_p),
	\end{equation*}
	where $0 \leq j_\alpha \leq 1$.
	If $s_p = 1$, we have $g(\tau_p) - h \leq g(t) \leq g(\tau_p) + 2\lambda - h$ as well as $\Mod_H g(\tau_p) = \lambda$ and can directly conclude that
	\begin{equation*}
		\Mod_H g(t) \geq \lambda - g(\tau_p) + g(\tau_p) - h - 2\lambda + h = -\lambda.
	\end{equation*}
	Moreover, for $t \geq \tau_p + \alpha$ we have $j_\alpha(t - \tau_p) = 1$ and, therefore, 
	\begin{align*}
		\Mod_H g(t) &= \Mod_H g(\tau_p) - g(\tau_p) + g(t) - 2\lambda + h \\
		&\leq \lambda - g(\tau_p) +  g(\tau_p) + 2\lambda - h - 2\lambda + h = \lambda.
	\end{align*}
	For $t \leq \tau_p + \alpha$, condition~\eqref{eq:fold_con1} gives $j_\alpha(t - \tau_p) \geq \frac{t - \tau_p}{\alpha}$ and, hence,
	\begin{align*}
		\Mod_H g(t) &= \Mod_H g(\tau_p) - (2\lambda - h) \, j_\alpha(t - \tau_p) + g(t) - g(\tau_p) \\
		&\leq \lambda - (2\lambda - h) \, \frac{t - \tau_p}{\alpha} + (t - \tau_p) \, \|g^\prime\|_\infty \leq \lambda.
	\end{align*}
	On the other hand, if $s_p = -1$, we have $g(\tau_p) - (2\lambda - h) \leq g(t) \leq g(\tau_p) + h$ and $\Mod_H g(\tau_p) = -\lambda$ so that
	\begin{equation*}
		\Mod_H g(t) \leq -\lambda - g(\tau_p) + g(\tau_p) + h + 2\lambda - h = \lambda.
	\end{equation*}
	As before, for $t > \tau_p + \alpha$, we have $j_\alpha(t - \tau_p) = 1$ and, hence,
	\begin{align*}
		\Mod_H g(t) &= \Mod_H g(\tau_p) - g(\tau_p) + g(t) + 2\lambda - h \\
		&\geq -\lambda - g(\tau_p) +  g(\tau_p) - (2\lambda - h) + 2\lambda - h = -\lambda.
	\end{align*}
	For $t \leq \tau_p + \alpha$, condition~\eqref{eq:fold_con1} gives $j_\alpha(t - \tau_p) \geq \frac{t - \tau_p}{\alpha}$ and
	\begin{align*}
		\Mod_H g(t) &= \Mod_H g(\tau_p) + (2\lambda - h) \, j_\alpha(t - \tau_p) + g(t) - g(\tau_p) \\
		&\geq -\lambda + (2\lambda - h) \, \frac{t - \tau_p}{\alpha} - (t - \tau_p) \, \|g^\prime\|_\infty \geq \lambda.
	\end{align*}
	Consequently, we have $|\Mod_H g(t)| \leq \lambda$ for $t < \tau_{p+1}$.
	Finally, if $\tau_{p+1} < \infty$, we either have $g(\tau_{p+1}) = g(\tau_{p}) - h s_p$ and $s_{p+1} = -s_p$ implying $\Mod_H g(\tau_{p+1}) = \lambda s_p + (2\lambda - h) s_p - (2\lambda - h) s_p = \lambda s_{p+1}$ or $g(\tau_{p+1}) = g(\tau_{p}) + (2\lambda - h) s_p$ and $s_{p+1} = s_p$ so that $\Mod_H g(\tau_{p+1}) = \lambda s_p - h s_p - (2\lambda - h) s_p = \lambda s_{p+1}$.
\end{proof}

\subsection{Transient dependent folding points}

As described in limitation \ref{item:P1} and seen in Fig.~\ref{fig:Mod_H_exceedance}, the above definition of $\Mod_H$ does not guarantee that $\Mod_\lambda g(t) \in [-\lambda,\lambda]$ for $t \geq \tau_0$.
One reason for this is that the folding points $(\tau_p)_{p \in \N}$ are independent of the folding function~$j_\alpha$ if $\alpha > 0$, as already indicated in limitation \ref{item:P3}.
To account for this, Theorem~\ref{theo:Mod_H_range} included an additional condition on the growth of the input function.
Instead, we now wish to propose a modified definition of the generalized modulo encoder.
To this end, we rewrite $\Mod_H g$ by defining the function sequence $(\eta_n^{(\alpha)})_{n \in \N_0}$ via
\begin{equation*}
\eta_{n+1}^{(\alpha)} = \eta_n^{(\alpha)} - (2\lambda - h) \sgn(\eta_n^{(\alpha)}(\tau_{n+1})) \, j_\alpha(\cdot - \tau_{n+1})
\end{equation*}
if $\tau_{n+1} < \infty$ and $\eta_{n+1}^{(\alpha)} = \eta_n^{(\alpha)}$ if $\tau_{n+1} = \infty$,
where $\eta_0^{(\alpha)} = g$.
With this, we then obtain
\begin{equation*}
\tau_{n+1} = \inf \bigl\{t > \tau_n \bigm| |\eta_n^{(0)}(t)| \geq \lambda \bigr\}
\end{equation*}
and
\begin{equation*}
\Mod_H g(t) = \lim_{n \to \infty} \eta_{n}^{(\alpha)}(t).
\end{equation*}
The idea is to make $\tau_{n+1}$ dependent on $\eta_n^{(\alpha)}$ rather than $\eta_n^{(0)}$.

\begin{definition} \label{def:Mod_j}
	Let $\lambda > 0$, $h < 2\lambda$ and $j_\alpha$ be a folding function with transient $\alpha > 0$ satisfying~\eqref{eq:fold_con2}.
	For a function $g \in  \Cont_{\lambda,\tau_0}^{0,1}$ with $\tau_0 \in \R$ we define the two sequences $(\kappa_n)_{n \in \N_0}$ and $(\zeta_n)_{n \in \N_0}$ via
	\begin{equation} \label{eq:Mod_j_kappa}
		\begin{gathered}
			\kappa_{n+1} = \inf \bigl\{t>\kappa_n \bigm| |\zeta_n(t)|\geq \lambda \bigr\},\\
			\zeta_{n+1} = \zeta_n - (2 \lambda -h) \sgn(\zeta_n(\kappa_{n+1})) \, j_{\alpha}(\cdot - \kappa_{n+1})
		\end{gathered}
	\end{equation}
	and $\zeta_{n+1} = \zeta_n$ if $\kappa_{n+1} = \infty$,
	where $\kappa_0 = \tau_0$ and $\zeta_0 = g$.
	Then, the output of the {\em generalized modulo operator} $\Mod_\lambda^{h,j_\alpha}$ is defined as
	\begin{equation} \label{eq:Mod_j}
		\Mod_\lambda^{h,j_\alpha} g(t) = \lim\nolimits_{n \to \infty} \zeta_n(t).
	\end{equation}
\end{definition}

Note that for $\alpha = 0$ and $0 < h < 2\lambda$ Definition~\ref{def:Mod_j} would yield the same generalized modulo encoder as Definition~\ref{def:Mod_H} so that $\Mod_\lambda^{h,j_0} = \Mod_{H_0}$ if $h > 0$.
Hence, we require that $\alpha > 0$, in which case also a negative $h < 2\lambda$ is applicable.
We now study basic properties of $\Mod_\lambda^{h,j_\alpha}$ and begin with its well-definedness by proving that the sequence in~\eqref{eq:Mod_j} converges.

\begin{proposition} \label{prop:Mod_j_convergence}
	For $g \in \Cont_{\lambda,\tau_0}^{0,1}$ the function sequence $(\zeta_n)_{n \in \N_0}$ converges pointwise and for every $t \in \R$ there exists $p \in \N_0$ with $\Mod_\lambda^{h,j_\alpha} g(t) = \zeta_p(t)$.
\end{proposition}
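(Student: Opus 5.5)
The idea is to show that the folding points $(\kappa_n)_{n\in\N_0}$ are strictly increasing and uniformly separated, in the spirit of Theorem~\ref{theo:Mod_H_separation}. Then for fixed $t$ only finitely many $\kappa_n$ lie below $t$. Since $j_\alpha(t-\kappa_{n+1}) = 0$ whenever $\kappa_{n+1} \geq t$, we get $\zeta_{n+1}(t) = \zeta_n(t)$ for all such $n$. The sequence $(\zeta_n(t))_{n \in \N_0}$ is therefore eventually constant, and we may take $p$ to be the number of indices with $\kappa_n < t$, or $p = 0$ if $t \leq \kappa_1$.

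First we record the regularity of the iterates. By induction each $\zeta_n$ is continuous; if $j_\alpha$ is Lipschitz, $\zeta_n$ is Lipschitz as well. Condition~\eqref{eq:fold_con2} makes $\partial_+ j_\alpha$ bounded by $\partial_+ j_\alpha(0)$ on $[0,\alpha]$. If $\partial_+ j_\alpha(0) = \infty$ we only use continuity, as explained below. Continuity of $\zeta_n$ implies that, when $\kappa_{n+1} < \infty$, we have $|\zeta_n(\kappa_{n+1})| = \lambda$. For $n = 0$ this uses $|g| < \lambda$ on $(-\infty,\tau_0]$, which also gives $\kappa_1 > \tau_0$. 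In particular the sign in~\eqref{eq:Mod_j_kappa} is well defined and nonzero. Moreover $\zeta_{n+1}(\kappa_{n+1}) = \zeta_n(\kappa_{n+1})$ because $j_\alpha(0) = 0$.

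The key step is a lower bound $\kappa_{n+2} - \kappa_{n+1} \geq \delta > 0$ with $\delta$ independent of $n$. Write $s = \sgn(\zeta_n(\kappa_{n+1}))$. Right after $\kappa_{n+1}$ we have
\begin{equation*}
\zeta_{n+1}(t) = \zeta_n(t) - (2\lambda - h)\, s\, j_\alpha(t-\kappa_{n+1}), \qquad \zeta_{n+1}(\kappa_{n+1}) = \lambda s .
\end{equation*}
So $\kappa_{n+2}$ is the first time after $\kappa_{n+1}$ at which $|\zeta_{n+1}| \geq \lambda$. This is not immediate, because $|\zeta_{n+1}(\kappa_{n+1})| = \lambda$ and the infimum could equal $\kappa_{n+1}$ itself; this is the main obstacle. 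Here~\eqref{eq:fold_con2} is used: $j_\alpha$ is concave and nondecreasing on $[0,\alpha]$, so $j_\alpha(u) \geq \min\{1, c\,u\}$ for some slope $c > 0$. When $2\lambda - h > 0$, on a short interval the folding term pushes $s\,\zeta_{n+1}$ strictly below $\lambda$ at rate at least $(2\lambda - h)c$, while $g$ moves at rate at most $L = \|g'\|_\infty$.

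To get a uniform $\delta$, I would handle the earlier folds as follows. On $[\kappa_{n+1}, \kappa_{n+1} + \alpha]$ the function $\zeta_{n+1}$ differs from $g$ by a sum of $j_\alpha$-terms. Earlier folds that are still in transition contribute monotone terms of controlled sign. Alternatively, argue by contradiction: if the separations accumulated, infinitely many transitions would overlap in a bounded interval. The point is that $s\,\zeta_{n+1}$ must rise by a definite amount before reaching $\lambda$ again: by at least $\min\{2\lambda - h, \cdot\}$, or, via the other threshold, by $2\lambda$. This rise requires time at least (amount)$/L$ once the transition has saturated. This mirrors the case analysis in Theorem~\ref{theo:Mod_H_range}.

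With a uniform $\delta$ we have $\kappa_n \geq \tau_0 + (n-1)\delta \to \infty$, and the finite-sum argument of Theorem~\ref{theo:Mod_H_separation} finishes the proof. The remaining care point is the case $h \leq 0$, where $2\lambda - h \geq 2\lambda$, which actually helps the estimate.
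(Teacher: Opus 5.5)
Your reduction is sound: once only finitely many $\kappa_n$ lie below each $t$, the sequence $(\zeta_n(t))$ is eventually constant. Your preliminary facts ($|\zeta_n(\kappa_{n+1})|=\lambda$, $\zeta_{n+1}(\kappa_{n+1})=\zeta_n(\kappa_{n+1})$, $\kappa_1>\tau_0$) are also fine. The gap is the key step. A uniform bound $\kappa_{n+2}-\kappa_{n+1}\geq\delta>0$ is \emph{false} under the hypotheses, because nothing relates $L=\|g'\|_\infty$ to the initial folding slope $(2\lambda-h)\,\partial_+ j_\alpha(0)$. Take $j_\alpha(t)=t/\alpha$ on $[0,\alpha]$ and $g$ locally linear with slope $L>(2\lambda-h)/\alpha$ at its first crossing of $\lambda$. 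Then $\zeta_1(t)-\lambda=(L-\frac{2\lambda-h}{\alpha})(t-\kappa_1)>0$ just after $\kappa_1$, so the infimum gives $\kappa_2=\kappa_1$. Folding points can therefore coincide; the paper itself writes $\kappa_{n+1}=\ldots=\kappa_{n+m}$ in the proof of Proposition~\ref{prop:Mod_j_continuation}. You identified exactly this obstacle, but your resolution tacitly assumes $(2\lambda-h)c>L$. That resolution was that the folding term pushes $s\,\zeta_{n+1}$ below $\lambda$ at rate $(2\lambda-h)c$ while $g$ moves at rate at most $L$. It also ignores earlier opposite-sign folds whose transitions are still running. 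These push $s\,\zeta_{n+1}$ back \emph{up}, at rate up to $(2\lambda-h)\,\partial_+ j_\alpha(0)$ each.

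The same oversight breaks your fallback estimate. Between sign changes, $\zeta_n$ moves at rate up to $L+(2\lambda-h)\,\partial_+ j_\alpha(0)$ times the number of folds currently in transition, not at rate $L$. Moreover, the next fold may occur before any transition has saturated. That number of folds in transition is not a priori bounded. Each same-sign cluster must compensate the slope of the preceding opposite-sign cluster that is still in transition, so cluster sizes may grow from one sign change to the next; this is the mechanism behind the endless folding in Proposition~\ref{prop:Mod_j_continuation}. Your contradiction sketch only restates what has to be excluded.

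What is missing is the quantitative bookkeeping the paper does:
\begin{enumerate}
\item pass to the subsequence $(\kappa_{m_n})$ of sign changes;
\item use \eqref{eq:fold_con2}, i.e.\ $\partial_+ j_\alpha\leq\partial_+ j_\alpha(0)$, to show cluster sizes grow at most linearly, $m_{n+1}-m_n\leq n\,c_g$;
\item deduce the non-uniform separation $\kappa_{m_n}-\kappa_{m_n-1}\geq\frac{2\lambda}{(2\lambda-h)\,\partial_+ j_\alpha(0)}\,(n\,c_g)^{-1}$;
\item conclude $\kappa_n\to\infty$ from the divergence of the harmonic series.
\end{enumerate}
Since $c_g$ involves $\|g\|_\infty$, a localization step then handles unbounded $g$. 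The uniform separation in Theorem~\ref{theo:Mod_H_separation} exists only because the $\tau_p$ are defined via the instantaneous fold. For the $\kappa_n$, a separation is available only under the extra bound on $g''$ in Theorem~\ref{theo:Mod_j_range_separation}, and then only for folds of opposite sign.
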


\begin{proof}
    We first prove the statement for $g \in \Cont_{\lambda,\tau_0}^{0,1} \cap \Lebesgue^{\infty}(\R)$ by showing that $(\kappa_{n})_{n \in \N_0}$ diverges to infinity.
	Then, for each $t \in \R$ there exists $p \in \N_0$ such that $t < \kappa_p$ and we obtain $\zeta_n(t) = \zeta_p(t)$ for all $n \geq p$.
	
	We consider the subsequence $(\kappa_{m_n})_{n \in \N_0}$ with $\kappa_{m_1} = \kappa_1$, where $\kappa_{m_{n+1}}$ is the next folding point with different sign, i.e.\ $\kappa_{m_{n+1}} = \inf_{k > m_n}\{\kappa_k \mid \sgn(\zeta_{k-1}(\kappa_k)) \neq \sgn(\zeta_{m_n} (\kappa_{m_n}))\}$.
	We now show that $m_{n+1} - m_n \leq n \, c_g$ by induction on $n \in \N$ with $c_g = \bigl\lceil\frac{\|g\|_\infty}{ \lambda}\bigr\rceil \cdot \bigl\lceil\frac{\alpha \, \|g^\prime\|_\infty}{2\lambda -h}\bigr\rceil$.
	For $n = 1$, we directly have
	\begin{equation*}
		m_2 - m_1 \leq \biggl\lceil \frac{\|g\|_\infty}{\lambda}\biggr\rceil \cdot \biggl\lceil\frac{\alpha \, \|g^\prime\|_\infty}{2\lambda - h}\biggr\rceil = c_g,
	\end{equation*}
	where $\bigl\lceil\frac{\|g\|_\infty}{\lambda}\bigr\rceil$ bounds the number of folds with distance larger than or equal to $\alpha$ and $\bigl\lceil\frac{\alpha \, \|g^\prime\|_\infty}{2\lambda - h}\bigr\rceil$ bounds the number of folds that can happen while resetting in an interval of length $\alpha$.
	For $n \geq 2$, we have to incorporate the case that previous non-instantaneous folds may induce additional folds, leading to
	\begin{align*}
		m_{n+1} - m_n &\leq \biggl\lceil\frac{\|g\|_\infty}{\lambda}\biggr\rceil \cdot \biggl\lceil\frac{\alpha \, \|g^\prime\|_\infty}{2\lambda - h}\biggr\rceil \\
		&\qquad +  \biggl\lceil  \frac{\partial_+ (\zeta_{m_n-1}(\kappa_{m_n-1}) - g(\kappa_{m_n-1}))}{(2\lambda - h) \cdot \partial_+ j_\alpha(0)} \biggr\rceil \\
		&\leq \biggl\lceil\frac{\|g\|_\infty}{\lambda}\biggr\rceil \cdot \biggl\lceil\frac{\alpha \, \|g^\prime\|_\infty}{2\lambda - h}\biggr\rceil + \biggl\lceil\sum_{i = m_{n-1}}^{m_n-1} \frac{\partial_+ j_\alpha(t_i)}{\partial_+ j_\alpha(0)} \biggr\rceil
	\end{align*}
	with certain points $t_i \in [0,\alpha]$.
	As Assumption~\eqref{eq:fold_con2} gives $\partial_+ j_\alpha(t_i) \leq \partial_+ j_\alpha(0)$, we can bound $m_{n+1} - m_n$ via
	\begin{align*}
		m_{n+1} - m_n &\leq \biggl\lceil\frac{\|g\|_\infty}{\lambda}\biggr\rceil \cdot \biggl\lceil\frac{\alpha \, \|g^\prime\|_\infty}{2 \lambda - h}\biggr\rceil + m_n - m_{n-1} \\
		&\leq  n \, \biggl\lceil\frac{\|g\|_\infty}{\lambda}\biggr\rceil \cdot \biggl\lceil\frac{\alpha \, \|g^\prime\|_\infty}{2 \lambda - h}\biggr\rceil = n \, c_g.
	\end{align*}
	By definition of $\kappa_{m_n}$, we have
	\begin{align*}
		\kappa_{m_n} &= \inf\{t>\kappa_{m_n-1} \mid |\zeta_{m_n-1}(t)|\geq \lambda\} \\
		&= \inf\{t>\kappa_{m_n-1} \mid |\zeta_{m_n-1}(t) - \zeta_{m_n-1}(\kappa_{m_n-1})| \geq 2 \lambda\}
	\end{align*}
	and, for $\kappa_{m_n-1} < t \leq \kappa_{m_n}$,
	\begin{align*}
		&|\zeta_{m_n-1}(t) - \zeta_{m_n-1}(\kappa_{m_n-1})| \\
		&\leq \biggl((2\lambda - h) \, \partial_+ j_\alpha(0) \, \Bigl(m_n - m_{n-1} + \Bigl\lceil \frac{\alpha \, \|g^\prime\|_\infty}{2\lambda - h}\Bigr\rceil\Bigr)\biggr) \\
		&\qquad \cdot (t - \kappa_{m_n-1}) \\
		&\leq \bigl((2\lambda - h) \, \partial_+ j_\alpha(0) \, (m_n - m_{n-1} + c_g)\bigr) \, (t - \kappa_{m_n-1})
	\end{align*}
	Hence,
	\begin{equation*}
		\kappa_{m_n} \geq \kappa_{m_n-1} + \frac{2\lambda}{(2\lambda - h) \, \partial_+ j_\alpha(0)} \, (m_n - m_{n-1} + c_g)^{-1}
	\end{equation*}
	and the above estimate gives 
	\begin{equation*}
		\kappa_{m_n} \geq \kappa_{m_n-1} + \frac{2\lambda}{(2\lambda - h) \, \partial_+ j_\alpha(0)} \, \bigl(n \, c_g\bigr)^{-1}.
	\end{equation*}
	As $\kappa_{m_\ell-1} \geq \kappa_{m_{\ell-1}}$ for all $1 < \ell \leq n$, we can conclude that
	\begin{equation*}
		\kappa_{m_{n+1}} \geq \kappa_1 +\frac{2\lambda}{(2\lambda - h) \, \partial_+ j_\alpha(0) \, c_g} \sum_{\ell=1}^n \frac{1}{\ell}.
	\end{equation*}
	Consequently, the subsequence $(\kappa_{m_n})_{n \in \N}$ and, therefore, also the full sequence $(\kappa_n)_{n \in \N_0}$ diverge to infinity for $n \to \infty$.
    
    Now, assume that $g \notin \Lebesgue^{\infty}(\R)$.
    Because $g$ has an essentially bounded derivative and  $|g(s)| < \lambda$ for all $s \leq \tau_0$, we can find for any $t \in \R$ a function $\widetilde{g} \in \Cont_{\lambda,\tau_0}^{0,1} \cap \Lebesgue^{\infty}(\R)$ with $g(s) = \widetilde{g}(s)$ for all $s \leq t$.
    This also implies that
	\begin{equation*}
		\Mod_\lambda^{h,j_\alpha} g(s) = \Mod_\lambda^{h,j_\alpha} \widetilde{g}(s)
		\quad \forall \, s \leq t
	\end{equation*}	    
    and, hence, the statement is also true for $g \notin \Lebesgue^{\infty}(\R)$.
\end{proof}

As opposed to $\Mod_H$, we now show that $\Mod_\lambda^{h,j_\alpha}$ satisfies $\Mod_\lambda^{h,j_\alpha} g (t) \in [- \lambda, \lambda]$ and $|\Mod_\lambda^{h,j_\alpha} g(\kappa_{n})| = \lambda$ if $\kappa_{n} < \infty$ for all $g\in \Cont_{\lambda,\tau_0}^{0,1}$ without additional restrictions on the first and second derivative of $g$.
However, making the folding points depend on $j_\alpha$ with positive transient $\alpha > 0$ has a disadvantageous side effect, namely that additional folds can be caused by the transient of prior folds in the opposite direction.
This is illustrated in Fig.~\ref{fig:Mod_j_continuation}, where we use $j_\alpha(t) = \frac{t}{\alpha}$ for $t \in [0,\alpha]$ and observe that the folds do not end.
To prevent this, we have to make an additional assumption on the second derivative of $g$, which allows us to show that folds in different directions are separated by $\alpha$.
	
\begin{theorem} \label{theo:Mod_j_range_separation}
	Let $g\in \Cont_{\lambda,\tau_0}^{0,1}$.
	Then, $\Mod_\lambda^{h,j_\alpha} g(t) \in [-\lambda,\lambda]$ for all $t \in \R$ and $|\Mod_\lambda^{h,j_\alpha} g(\kappa_n)| = \lambda$ if $\kappa_n < \infty$.
	If, in addition, $g \in \Cont^{1,1}_{\lambda,\tau_0}$ with $\|g^{\prime\prime}\|_\infty \leq \frac{2 h}{\alpha^2}$, then, for $n \in \N$ and $\kappa_{n+1} < \infty$ we have
	\begin{equation*}
		\Mod_\lambda^{h,j_\alpha} g(\kappa_n) \cdot \Mod^{h,j_\alpha}_\lambda g(\kappa_{n+1}) < 0
		\implies
		\kappa_{n+1} - \kappa_n \geq \alpha.
	\end{equation*}
\end{theorem}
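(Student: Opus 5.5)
The plan has two parts. First we prove the range statement by induction over the folding points, using Proposition~\ref{prop:Mod_j_convergence}. Then we prove the separation of folds with opposite signs through a second-order Taylor argument.

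\emph{Range.} By Proposition~\ref{prop:Mod_j_convergence}, for every $t$ there is $p$ with $\Mod_\lambda^{h,j_\alpha} g(t) = \zeta_p(t)$. Moreover, $\zeta_n(t) = \zeta_{n+1}(t)$ for $t \le \kappa_{n+1}$, since $j_\alpha(t - \kappa_{n+1}) = 0$ there. So it suffices to show by induction on $n$ that $|\zeta_n(t)| \le \lambda$ for $t \in (\kappa_n, \kappa_{n+1}]$.

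For $t \le \kappa_1$ this follows from the definition of $\kappa_1$, since $\zeta_0 = g$ and $|g| < \lambda$ on $(-\infty, \tau_0]$. Because $\zeta_n$ is continuous and $\kappa_{n+1}$ is the first point after $\kappa_n$ where $|\zeta_n| \ge \lambda$, we get $|\zeta_n(t)| < \lambda$ for $t \in (\kappa_n, \kappa_{n+1})$ and $|\zeta_n(\kappa_{n+1})| = \lambda$.

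One check is needed: $\kappa_{n+1} > \kappa_n$ must be meaningful when $|\zeta_n(\kappa_n)| = \lambda$. Right after the fold, $\zeta_n = \zeta_{n-1} - (2\lambda - h)\, s\, j_\alpha(\cdot - \kappa_n)$ starts at value $\lambda s - (2\lambda - h) s \cdot 0 = \lambda s$. It may immediately exceed $\lambda$ if $g$ grows faster than the fold, in which case $\kappa_{n+1} = \kappa_n$ in the infimum sense. I would handle this by reading the infimum over $t > \kappa_n$ and noting that the value at $\kappa_{n+1}$ is $\pm\lambda$ by continuity.

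Finally, $\Mod_\lambda^{h,j_\alpha} g(\kappa_n) = \zeta_{n-1}(\kappa_n)$, because later folds vanish at $\kappa_n$. Hence its absolute value is $\lambda$ whenever $\kappa_n < \infty$.

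\emph{Separation.} Suppose $\Mod_\lambda^{h,j_\alpha} g(\kappa_n) = \lambda s$ and $\Mod_\lambda^{h,j_\alpha} g(\kappa_{n+1}) = -\lambda s$; take $s = 1$ without loss of generality. Assume for contradiction that $\delta = \kappa_{n+1} - \kappa_n < \alpha$. On $[\kappa_n, \kappa_{n+1}]$ we have
\begin{equation*}
\zeta_n(t) = \lambda + \bigl(g(t) - g(\kappa_n)\bigr) - (2\lambda - h)\bigl(j_\alpha(t - \kappa_n) - 0\bigr) - \sum_{k < n} s_k \, (2\lambda - h) \bigl(j_\alpha(t - \kappa_k) - j_\alpha(\kappa_n - \kappa_k)\bigr).
\end{equation*}
The condition $\zeta_n(\kappa_{n+1}) = -\lambda$ requires a drop of $2\lambda$.

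The key step is to use that $\zeta_{n-1}$ reached $+\lambda$ from below at $\kappa_n$. This gives a one-sided derivative $\partial_+ \zeta_{n-1}(\kappa_n) \ge 0$, equivalently (left-)derivative information. By \eqref{eq:fold_con2} each active earlier fold $j_\alpha(\cdot - \kappa_k)$ is concave on $[0, \alpha]$. So the contribution of earlier folds, combined with $g'$, is controlled from below by its slope at $\kappa_n$, up to the curvature of $g$. Using $|g(t) - g(\kappa_n) - g'(\kappa_n)(t - \kappa_n)| \le \tfrac12 \|g''\|_\infty (t - \kappa_n)^2 \le h (t - \kappa_n)^2/\alpha^2$, together with $j_\alpha \le 1$, we would obtain
\[
\zeta_n(\kappa_{n+1}) \ge \lambda - (2\lambda - h) - h\,\delta^2/\alpha^2 > -\lambda,
\]
which is a contradiction.

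The main obstacle is this last step. The nonnegative initial slope of $\zeta_{n-1}$ at $\kappa_n$ must dominate all previous transients, which are concave with nonincreasing slope by \eqref{eq:fold_con2}, and this is not automatic when earlier folds of both signs are still active. I would first try to reduce to the case where all active earlier folds share the sign $s$. Those folds push downward with decreasing rate, so their slope at later times is at least their slope at $\kappa_n$, and concavity of $-s\,j_\alpha$ then goes in the favorable direction. Opposite-sign active folds would be excluded by an induction hypothesis on the separation claim itself.
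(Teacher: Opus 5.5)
Your proposal is correct and follows essentially the same route as the paper. The range part is the same infimum/continuity argument. The separation part is the paper's induction over sign changes (the subsequence $\kappa_{m_n}$): the previous $\alpha$-separation ensures all active folds share one sign, and then $\partial_+\zeta_{n-1}(\kappa_n)\ge 0$, the concavity from \eqref{eq:fold_con2}, $\|g''\|_\infty\le 2h/\alpha^2$ and $j_\alpha\le 1$ give $\zeta_n(t)\ge -\lambda+h-\tfrac{h}{\alpha^2}(t-\kappa_n)^2$. One point to tighten: $\partial_+\zeta_{n-1}(\kappa_n)\ge 0$ should be derived from the infimum in \eqref{eq:Mod_j_kappa}, i.e.\ from points just to the right of $\kappa_n$ where $\zeta_{n-1}\ge\lambda$, not from $\zeta_{n-1}$ approaching $\lambda$ from below, because $\kappa_{n-1}=\kappa_n$ is possible and the left and right slopes of $\zeta_{n-1}$ can differ at fold kinks.
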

	
\begin{proof}
	Since $\Mod_\lambda^{h,j_\alpha} g(t) = \zeta_{n-1}(t)$ for all $t \leq \kappa_n$, the definition in~\eqref{eq:Mod_j_kappa} gives $|\Mod_\lambda^{h,j_\alpha} g(\kappa_n)| = |\zeta_{n-1}(\kappa_n)| = \lambda$.
	Now assume that there is $t \in (\kappa_n, \kappa_{n+1})$ so that $|\Mod_\lambda^{h,j_\alpha} g(t)| > \lambda$.
	This in turn implies that $|\zeta_n(t)| > \lambda$ and, hence, $t > \kappa_{n+1}$ in contradiction to the assumption.
	To prove the separation property, as in the proof of Proposition~\ref{prop:Mod_j_convergence} consider the subsequence $ (\kappa_{m_n})_{n \in \N}$, where $\kappa_{m_1} = \kappa_{1}$ and $\kappa_{m_{n+1}}$ is the next folding point with different sign, i.e.,
	\begin{equation*}
		\kappa_{m_{n+1}} = \inf_{k > m_n} \bigl\{\kappa_k \bigm| \sgn(\zeta_{k-1}(\kappa_k)) \neq \sgn(\zeta_{m_n}(\kappa_{m_n}))\bigr\} 
	\end{equation*}
	so that $\Mod_\lambda^{h,j_\alpha} g(\kappa_{m_n}) = \pm \lambda  = -\Mod_\lambda^{h,j_\alpha} g(\kappa_{m_{n+1}})$.
	Without loss of generality let $\Mod_\lambda^{h,j_\alpha} g(\kappa_{m_1}) = \lambda$.
	Then, we obtain $\Mod_\lambda^{h,j_\alpha} g(\kappa_{m_2-1}) = \lambda$ and $\Mod_\lambda^{h,j_\alpha} g(\kappa_{m_2}) = -\lambda$.
	Additionally, the monotonicity of $\partial_+ j_\alpha$ due to Assumption~\eqref{eq:fold_con2} gives $g^\prime(\kappa_{m_2-1}) \geq \partial_+\bigl(\sum\nolimits_{\kappa \in K \setminus \{\kappa_{m_2-1}\}} (2\lambda - h) \, j_\alpha(t - \kappa)\bigr)$ for $t > \kappa_{m_2-1}$ and with $K = \{\kappa_k \in (\kappa_{m_2-1} - \alpha, \kappa_{m_2-1}]\}$.
	Therefore, $g^\prime(\kappa_{m_2-1}) \, (t - \kappa_{m_2-1}) \geq \sum\nolimits_{\kappa \in K \setminus \{\kappa_{m_2-1}\}} (2\lambda - h) \, \bigl(j_\alpha(t - \kappa) -j_\alpha(\kappa_{m_2-1} - \kappa))\bigr)$ and using~\eqref{eq:fold_con2} we can estimate $\zeta_{m_2-1}(t)$ as
	\begin{align*}
		&\zeta_{m_2-1}(t) = \zeta_{m_2-1}(\kappa_{m_2-1}) - \zeta_{m_2-1}(\kappa_{m_2-1}) + \zeta_{m_2-1}(t)\\
        & = \lambda - g(\kappa_{m_2-1}) + g(t) - (2\lambda - h) \sum\nolimits_{n=1}^{m_2-1}  \sgn(\zeta_{n-1}(\kappa_n)) \\
        & \quad \cdot \bigl(j_\alpha(t - \kappa_n) - j_\alpha(\kappa_{m_2-1} - \kappa_n)\bigr) \\
        & \geq \lambda  + g^\prime(\kappa_{m_2-1}) \,  (t - \kappa_{m_2-1}) - \frac{\|g^{\prime\prime}\|_\infty}{2} \, (t - \kappa_{m_2 -1})^2 \\
        & \quad - (2\lambda - h) \sum\nolimits_{\kappa \in K}  \bigl(j_\alpha(t - \kappa) - j_\alpha(\kappa_{m_2-1} - \kappa)\bigr) \\
        & \geq \lambda  - \frac{h}{\alpha^2} \, (t - \kappa_{m_2-1})^2 - (2 \lambda -h) \cdot j_{\alpha} (t - \kappa_{m_2-1} ) \\
		& \geq - \lambda + h - \frac{h}{\alpha^2} \, (t - \kappa_{m_2-1})^2 .
	\end{align*}
	With this we can conclude that
	\begin{align*}
		\kappa_{m_2} &= \inf\bigl\{t > \kappa_{m_2 -1} \bigm| \zeta_{m_2-1}(t) = -\lambda\bigr\} \\
		&\geq \inf\bigl\{t > \kappa_{m_2-1} \bigm| -\lambda +h - \tfrac{h}{\alpha^2} \, (t - \kappa_{m_2-1})^2 = -\lambda\bigr\} \\
		&= \kappa_{m_2-1} + \alpha.
	\end{align*}
	We now assume that we have $\kappa_{m_n} \geq \kappa_{m_n-1} + \alpha$ for $n \geq 2$.
	Again without loss of generality let $\Mod_\lambda^{h,j_\alpha} g(\kappa_{m_n}) = \lambda$.
	Since $\kappa_{m_{n+1}-1} \geq \kappa_{m_n} \geq \kappa_{m_n-1} + \alpha$ we can proceed with the same arguments as before to obtain
	\begin{align*}
		&\kappa_{m_{n+1}} = \inf\bigl\{t>\kappa_{m_{n+1}-1} \bigm| \zeta_{m_{n+1}-1}(t) = -\lambda\bigr\} \\
		&\geq \inf\bigl\{t>\kappa_{m_{n+1}-1} \bigm| -\lambda + h - \tfrac{h}{\alpha^2} \, (t - \kappa_{m_{n+1}-1})^2 = -  \lambda\bigr\} \\
		&= \kappa_{m_{n+1}-1} + \alpha,
	\end{align*}
	which completes the proof.
\end{proof}

The separation in Theorem~\ref{theo:Mod_j_range_separation} can be used to get the return property for sufficiently large $t$ also for the operator $\Mod_\lambda^{h,j_\alpha}$.
	
\begin{proposition} \label{prop:Mod_j_return}
	Let $0 < h < \lambda$ and $g \in \Cont^{1,1}_{\lambda,\tau_0} \cap \B^{\lambda-h}_\rho$ with $\rho > \tau_0$ and $\|g^{\prime\prime}\|_\infty \leq \frac{2 h}{\alpha^2}$.
	Then, $\Mod_\lambda^{h,j_\alpha} g(t) = g(t)$ for all $t \geq \rho + 2 \alpha$.
\end{proposition}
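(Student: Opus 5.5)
Our plan is to show that no folding point $\kappa_n$ lies in $[\rho+\alpha,\infty)$, and that all folds before that point have finished their transients by time $\rho+2\alpha$. Once this is known, for $t \ge \rho+2\alpha$ every active fold contributes $j_\alpha(t-\kappa_n)=1$. Then $\Mod_\lambda^{h,j_\alpha} g(t) = g(t) - c\,(2\lambda-h)$ for some integer $c$ that is constant on $[\rho+2\alpha,\infty)$. The integer argument from the proof of Proposition~\ref{prop:Mod_H_return} then forces $c=0$: we have $|g(t)|<\lambda-h$ by $g\in\B^{\lambda-h}_\rho$, and the output lies in $[-\lambda,\lambda]$ by Theorem~\ref{theo:Mod_j_range_separation}.

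\emph{Step 1: structure of the output after $\rho$.} By Proposition~\ref{prop:Mod_j_convergence} the folding points diverge, so only finitely many $\kappa_n$ lie in any bounded interval. Let $\kappa_N$ be the last folding point with $\kappa_N<\rho+\alpha$, if further ones exist. We first argue that any folding point $\kappa_n\ge\rho$ must be preceded within distance less than $\alpha$ by another fold that is still in transient. Suppose instead that $\kappa_n\ge\rho$ and that all earlier folds are completed, i.e.\ $j_\alpha(\kappa_n-\kappa_k)=1$ for all $k<n$. Then $\zeta_{n-1}(\kappa_n)=g(\kappa_n)-c(2\lambda-h)$ with $c\in\Z$. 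The two-sided estimate $-\lambda+h<g<\lambda-h$ gives $|\zeta_{n-1}(\kappa_n)|<\lambda$ when $c=0$ and $|\zeta_{n-1}(\kappa_n)|>\lambda$ when $c\neq0$. The second alternative contradicts the range bound of Theorem~\ref{theo:Mod_j_range_separation}, since $\zeta_{n-1}=\Mod_\lambda^{h,j_\alpha} g$ up to $\kappa_n$ and the output must stay in $[-\lambda,\lambda]$ on $(\kappa_{n-1},\kappa_n)$ while continuously approaching that value. The first alternative contradicts $|\zeta_{n-1}(\kappa_n)|=\lambda$. Hence every fold at a time $\ge\rho$ occurs during the transient of an earlier fold.

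\emph{Step 2: folds during transients.} Now consider a fold $\kappa_n\ge\rho$ inside the transient window of a previous fold $\kappa_k$, so $\kappa_n-\kappa_k<\alpha$. By the separation part of Theorem~\ref{theo:Mod_j_range_separation}, which uses $\|g''\|_\infty\le 2h/\alpha^2$, consecutive folds of opposite sign are at least $\alpha$ apart. Hence $\kappa_n$ has the same sign as its predecessor, and so a chain of folds lying within $\alpha$ of each other is monotone in direction. Between two same-direction folds the transient pushes the output away from the threshold just reached, while $|g|<\lambda-h$ keeps $g$ away from it. We will compare $\zeta_{n-1}$ with its completed-transient value, using concavity from \eqref{eq:fold_con2}, to show that such a same-sign fold cannot occur after $\rho$. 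Concretely, in the interval after $\kappa_{n-1}$ the quantity $\zeta_{n-1}$ moves toward the opposite threshold by at least the linear rate from \eqref{eq:fold_con2}, so it cannot return to the threshold it just hit. Combining the two steps shows that no fold lies in $[\rho+\alpha,\infty)$. A cruder fallback suffices for the claim: allow at most one such chain to start before $\rho+\alpha$ and terminate, which places all folds in $(-\infty,\rho+\alpha)$ and all transients finished by $\rho+2\alpha$.

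\emph{Step 3: conclusion.} For $t\ge\rho+2\alpha$ all $j_\alpha(t-\kappa_n)=1$, so the output equals $g(t)+c(2\lambda-h)$ with $c\in\Z$ fixed. The argument of Proposition~\ref{prop:Mod_H_return} together with the range bound then gives $c=0$.

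The main obstacle is Step 2: ruling out cascades of same-direction folds triggered by transients after $\rho$ and quantifying that they stop within $\alpha$. This is exactly where the $2\alpha$ rather than $\alpha$ margin enters. We would try to bound it first by the separation theorem combined with the monotonicity of $\partial_+ j_\alpha$.
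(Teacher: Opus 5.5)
Your Step~1 is correct and matches the mechanism of the paper's Cases~1--3. Once every transient has finished at some time $\geq\rho$, the output is $g+c(2\lambda-h)$ with $c\in\Z$, the range bound forces $c=0$, and no further fold can occur.

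The gap is Step~2, which is the actual content of the proposition. The mechanism you state there ignores the contribution of $g$. You claim that after $\kappa_{n-1}$ the function $\zeta_{n-1}$ moves toward the opposite threshold at least at the linear rate, so it cannot return. But the slope of $\zeta_{n-1}$ is $g^\prime$ minus $(2\lambda-h)$ times the slopes of the active transients, and the hypotheses give no bound on $\|g^\prime\|_\infty$. In general $g$ can outrun the transient and retrigger a fold in the same direction; this is how the coinciding folds $\kappa_{n+1}=\dots=\kappa_{n+m}$ in the proof of Proposition~\ref{prop:Mod_j_continuation} arise. The separation part of Theorem~\ref{theo:Mod_j_range_separation} and the monotonicity of $\partial_+ j_\alpha$ only tell you that the folds of a chain share a sign; they say nothing about $g^\prime$. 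Likewise, $|g(\kappa_n)|<\lambda-h$ at the single time $\kappa_n$ is compatible with $\zeta_{n-1}(\kappa_n)=\lambda$; it only forces a large enough unfinished transient. Your ``cruder fallback'' is also just an assertion: nothing you wrote bounds the length of a chain of same-sign folds each within $\alpha$ of its predecessor, so you have not shown that all folds lie before $\rho+\alpha$.

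The missing ingredient is a quantitative reason why such a cascade stops. The paper supplies it by counting. Let $\kappa_n$ be the last fold $\leq\rho$, and suppose both $\kappa_{n+1}<\kappa_n+\alpha$ and $\kappa_{n+2}<\kappa_{n+1}+\alpha$. Then the three folds have the same sign, and at $\kappa_{n+2}+\alpha$ they have shifted the output by $3(2\lambda-h)$, more than the variation of $g$ can offset, contradicting the range bound. Hence at most one fold follows $\kappa_n$ inside its transient, it lies before $\rho+\alpha$, and this is where the margin $2\alpha$ comes from. (As written, that estimate also uses $g(\kappa_n)>-(\lambda-h)$, although the hypothesis only controls $g$ on $[\rho,\infty)$.)

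Your own route can be closed by applying the curvature bound to the future of $g$:
\begin{itemize}
\item Take the first fold $\kappa_m\geq\rho$, say with $\zeta_{m-1}(\kappa_m)=\lambda$. Write $\lambda=g(\kappa_m)-(2\lambda-h)(C-A)$, where $C\in\Z$ and $A\geq0$ is the unfinished part of the active transients (all of sign $+1$ by the separation theorem).
\item If $A=0$ you are in your Step~1. Otherwise $\zeta_{m-1}$ reaches $\lambda$ from below. Concavity of $j_\alpha$ from \eqref{eq:fold_con2}, in the form $1-j_\alpha(s)\leq\alpha\,\partial_- j_\alpha(s)$ for $s\in(0,\alpha)$, then gives $g^\prime(\kappa_m)\geq x/\alpha$ with $x=(2\lambda-h)A$.
\item The bound $\|g^{\prime\prime}\|_\infty\leq 2h/\alpha^2$ then produces some $t\geq\kappa_m$ with $g(t)\geq g(\kappa_m)+\frac{x^2}{4h}=\lambda-h+(2\lambda-h)C+\frac{(x-2h)^2}{4h}$.
\item For $C\geq0$ this contradicts $g<\lambda-h$ on $[\rho,\infty)$. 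For $C\leq-1$ one gets $g(\kappa_m)\leq-(\lambda-h)$ directly, again a contradiction.
\end{itemize}
So in fact no fold lies in $[\rho,\infty)$, which even yields the return property on $[\rho+\alpha,\infty)$.
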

	
\begin{proof}
	Let $\kappa_{n}$ be the largest folding point smaller than or equal to $\rho$.
	We now go through the different possible cases.
	
	\smallskip
	\underline{Case 1:}
	If $\kappa_n + \alpha \leq \rho$, there exists $k \in \Z$ such that
	\begin{align*}
		|\Mod_\lambda^{h,j_\alpha} g(\rho)| &= |k \, (2 \lambda - h) + g(\rho)| \\
		&\geq |k| \, (2 \lambda - h) - |g(\rho)| \\
		&> |k| \, (2 \lambda - h) - \lambda + h.
	\end{align*}
	As $|\Mod_\lambda^{h,j_\alpha} g(\rho)| \leq \lambda$, we get $k = 0$, which implies that $\Mod_\lambda^{h,j_\alpha} g(\rho) = g(\rho)$ and $\zeta_n(t) = g(t)$ for all $t \geq \rho$.
	Inserting this into the definition of $\kappa_{n+1}$ gives	
	\begin{equation*}
		\kappa_{n+1} = \inf\{t > \rho \mid |g(t)| \geq \lambda\} = \infty,
	\end{equation*}
	from which follows that $\Mod_\lambda^{h,j_\alpha} g(t) = g(t)$ for all $t \geq \rho$.
	
	\smallskip
	\underline{Case 2:}
	If $\kappa_n + \alpha > \rho$, but $\kappa_n + \alpha \leq  \kappa_{n+1}$, we get, for some $k \in \Z$,
	\begin{align*}
		|\Mod_\lambda^{h,j_\alpha} g(\kappa_n + \alpha)| &= |k \, (2 \lambda - h) + g(\kappa_n + \alpha)| \\
		&> |k| \, (2 \lambda - h) - \lambda + h
	\end{align*}
	so that $\kappa_{n+1} = \infty$ and $\Mod_\lambda^{h,j_\alpha} g(t) = g(t)$ for all $t \geq \kappa_n + \alpha$.
	Since $\kappa_n \leq \rho$, this also holds for all $t \geq \rho + \alpha$.
	
	\smallskip
	\underline{Case 3:}
	If $\kappa_n + \alpha > \max\{\rho, \kappa_{n+1}\}$, but $\kappa_{n+1} + \alpha \leq  \kappa_{n+2}$, we get, for some $k \in \Z$,
	\begin{align*}
		|\Mod_\lambda^{h,j_\alpha} g(\kappa_{n+1} + \alpha)| &= |k \, (2 \lambda - h) + g(\kappa_{n+1} + \alpha)| \\
		&> |k| \, (2 \lambda - h) - \lambda + h
	\end{align*}
	so that $\kappa_{n+2} = \infty$ and $\Mod_\lambda^{h,j_\alpha} g(t) = g(t) $ for all $t \geq \kappa_{n+1} + \alpha$.
	Since $\kappa_{n+1} \leq \rho + \alpha$, this also holds for all $t \geq \rho + 2\alpha$.
	
	\smallskip
	\underline{Case 4:}
	Let $\kappa_n + \alpha > \max\{\rho, \kappa_{n+1}\}$, $\kappa_{n+1} + \alpha >  \kappa_{n+2}$.
	As above, without loss of generality let $\Mod_\lambda^{h,j_\alpha} g( \kappa_{n}) = \lambda$.
	Then, the separation property in Theorem~\ref{theo:Mod_j_range_separation} implies that $\lambda = \Mod_\lambda^{h,j_\alpha} g(\kappa_{n+1}) = \Mod_\lambda^{h,j_\alpha} g(\kappa_{n+2})$ and $\Mod_\lambda^{h,j_\alpha} g(\kappa_{n+2} + \alpha) \leq \zeta_{n+2}(\kappa_{n+2} + \alpha)$.
	This gives us that
	\begin{align*}
		& \Mod_\lambda^{h,j_\alpha} g( \kappa_{n+2} + \alpha) \leq \lambda + \zeta_{n+2}( \kappa_{n+2} + \alpha)  - \zeta_{n+2}( \kappa_{n} ) \\
		&\quad \leq \lambda + g( \kappa_{n+2} + \alpha)  - g( \kappa_{n} ) - 3 ( 2 \lambda-h) \\
		&\quad < \lambda + 2 (\lambda - h)- 3 (2 \lambda - h) = -3\lambda + h < -2\lambda
	\end{align*}
	in contradiction to $ \Mod_\lambda^{h,j_\alpha} g(t) \in [-\lambda,\lambda]$.
\end{proof}

\begin{figure}
	\centering
	\includegraphics[width=\linewidth]{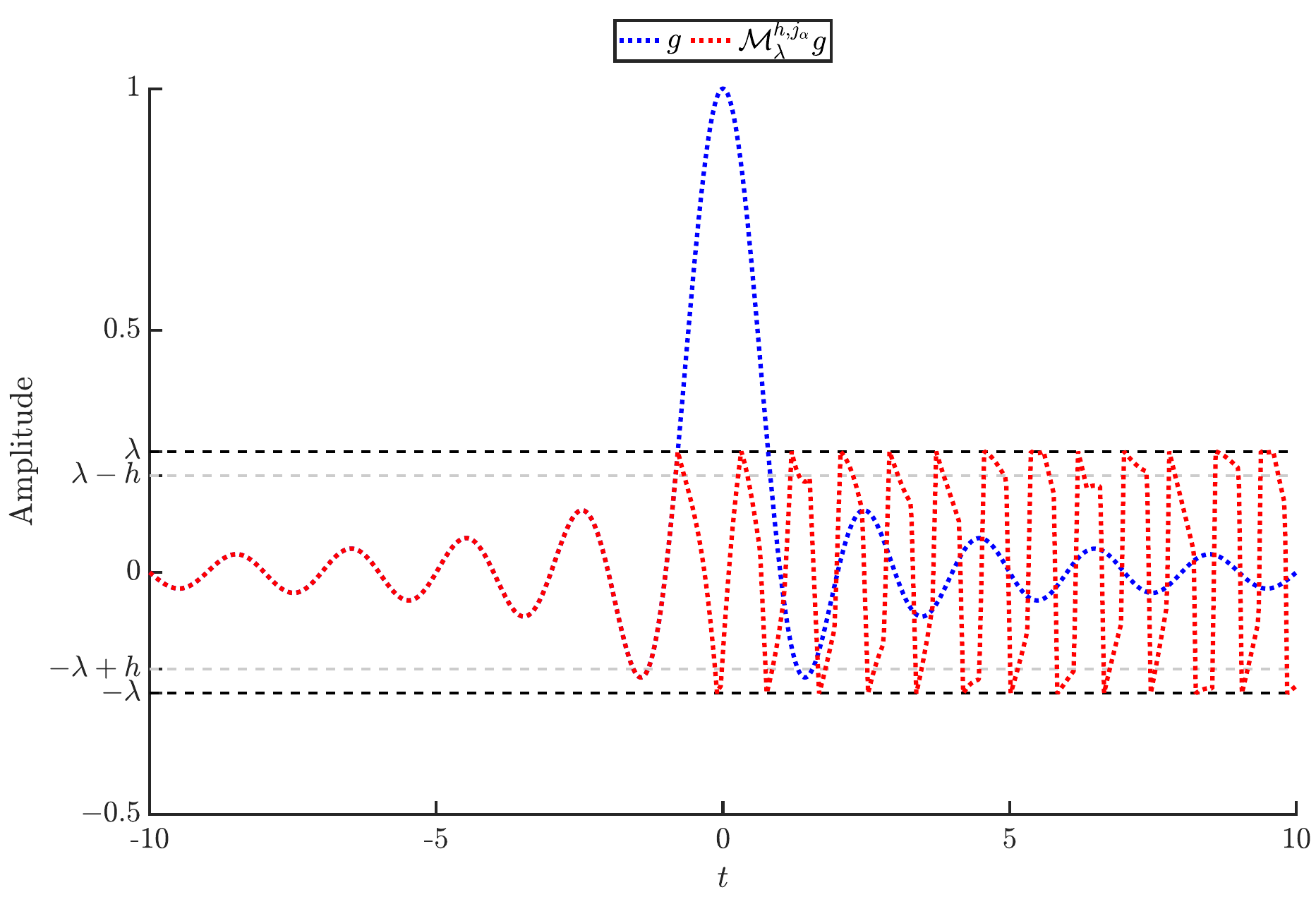}
	\caption{Example of an endless folding in $\Mod_\lambda^{h,j_\alpha} g(t)$ using $g(t) = \sinc(\pi t)$, where $j_\alpha(t) = \frac{t}{\alpha}$ for $t \in [0,\alpha]$, $\lambda = 0.25$, $h = 0.05$ and $\alpha = 0.75$.}
	\label{fig:Mod_j_continuation}
\end{figure}

We now show that the behaviour in Fig.~\ref{fig:Mod_j_continuation} is not due to numerical inaccuracies but can be proven to occur.
As the construction depends on the specific form of $j_\alpha$, we restrict the following discussion to the folding function from~\cite{Beckmann2025a}, i.e.,
\begin{equation*}
	j_\alpha(t) = \begin{cases} \frac{t}{\alpha} & \text{if } t \in [0,\alpha] \\ 0 & \text{otherwise}. \end{cases}
\end{equation*}
Endless folding may happen if folds in one direction are triggered while folds in the opposite direction are still incomplete.
More precisely, we make the following assumption.

\begin{assumption} \label{as:Mod_j_continuation}
	There are $c_0 \in \N$, $c_0 < c_1 \in \N$, $c_2 \in \{0,1\}$, $c_3 = c_1 + c_2$, $c_4 \in \{0,1\}$ as well as $t_1 \in \R$, $t_2 \in [t_1,t_1+\alpha)$, $t_3 \in (\max \{t_2 ,t_1 + \tfrac{\alpha}{2}\}, t_1+\alpha)$ and $t_4 \in[t_3,t_3+\alpha)$ so that
	\begin{align*}
		\zeta_n(t) &= g(t) + \sigma \, r(t) - (2\lambda - h) \, \sigma \, \bigl(c_0 - c_1 \, j_\alpha(t - t_1) \\
		&\quad - c_2 \, j_\alpha(t - t_2) + c_3 \, j_\alpha(t - t_3) + c_4 \, j_\alpha(t - t_4)\bigr)
	\end{align*}
	for all $t \geq t_3$ and 
	\begin{equation*}
		\sigma \cdot \zeta_{n}(t) < \lambda
		\quad \forall \, t > t_4,
	\end{equation*}
	where $\sigma \in \{\pm 1\}$ and $r \colon \R \to \R$ is a function with $r(t) \geq0$ for all $t \in \R $, $r(t) = 0$ for all $t \geq t_1 + \alpha $ and $\| \partial_+ r \|_{\infty} \leq \frac{2\lambda - h}{\alpha}$.
\end{assumption}

Using Assumption~\ref{as:Mod_j_continuation} we can guarantee the continuation of foldings as follows.

\begin{proposition} \label{prop:Mod_j_continuation}
	Let $h < \frac{\lambda}{2}$ and $j_\alpha(t) = \frac{t}{\alpha}$ for $t \in [0,\alpha]$.
	Moreover, let $g \in \Cont^{1,1}_{\lambda,\tau_0}$ and $n \geq 5$ such that
	\begin{enumerate}
		\item Assumption~\ref{as:Mod_j_continuation} is satisfied with $\sigma \in \{\pm 1\}$ and $t_i = \kappa_{m_i}$ for $m_1 \leq m_2 \leq m_3 \leq m_4 = n$,
		\item for all $t > \kappa_n - 2\alpha$ there exists at most one $s \in [t,t+2 \alpha]$ with $g^\prime(s) = 0$,
		\item $|g(t)| < h$ for all $t \geq t_3$,
		\item $|g(t) - g(s)| < h$  for all $t,s \geq t_3$ ,
		\item $|g^\prime(t)| < \frac{2\lambda - h}{\alpha}$ for all $t \geq t_1$.
	\end{enumerate}
	Then, for all $t \in \R$ there exists $q \in \N$ so that $t < \kappa_{q} < \infty$.
\end{proposition}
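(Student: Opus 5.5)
The strategy is to show that Assumption~\ref{as:Mod_j_continuation} is self-reproducing. Starting from the configuration at $\kappa_n$, I will show that a further fold $\kappa_{n'}$ with $n' > n$ must occur. I will then show that the state after the new folds again satisfies Assumption~\ref{as:Mod_j_continuation}, with $\sigma$ replaced by $-\sigma$, new times $t_i' = \kappa_{m_i'}$, and a new nonnegative remainder $r'$. Hypotheses 2)--5) are stated for all $t \geq t_3$ (resp.\ $t \geq t_1$), so they are inherited at every step. The statement then follows by induction, provided the new folding points move forward by a uniformly positive amount. This is the case because $t_3' > t_1' + \frac{\alpha}{2}$ and $t_1' \geq t_3$ force an advance of at least $\frac{\alpha}{2}$ per cycle. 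Hence $(\kappa_q)$ is an infinite sequence of finite points tending to infinity, and for every $t$ some $\kappa_q$ with $t < \kappa_q < \infty$ exists.

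\textbf{Step 1: existence of the next fold.} Evaluate $\zeta_n$ at $t_1+\alpha$ and, if needed, at $t_4+\alpha$, where all the linear ramps $j_\alpha(\cdot - t_i)$ have finished. There $r = 0$, and the bracket equals $c_0 - c_1 - c_2 + c_3 + c_4 = c_0 + c_4$. Hence
\begin{equation*}
\sigma\,\zeta_n(t) = \sigma g(t) - (2\lambda - h)(c_0 + c_4) \leq h - (2\lambda - h) < -\lambda
\end{equation*}
by hypothesis~3) and $h < \frac{\lambda}{2}$ (the strict inequality needs only $h<\lambda$). Since $c_0 \geq 1$ and $\zeta_n$ is continuous, there is a first time after $t_4 = \kappa_n$ at which $\sigma\zeta_n = -\lambda$. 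This yields $\kappa_{n+1} < \infty$ with $\sgn\zeta_n(\kappa_{n+1}) = -\sigma$. Because the assumption gives $\sigma\zeta_n < \lambda$ for $t > t_4$, no fold of sign $\sigma$ can intervene first.

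\textbf{Step 2: rebuild the representation.} Put $t_1' = \kappa_{n+1}$. Since $t_3 > t_1 + \frac{\alpha}{2}$ and the $c_1$-ramp ends at $t_1 + \alpha$, we get $t_1' \in [t_3, t_3 + \alpha)$. Absorb into the new remainder $r'$ all ramps that are still active, namely the tails of the $c_3$-, $c_4$- and possibly $c_1$-terms, together with the old $r$. Use the slope bound (hypothesis~5, $|g'| < \frac{2\lambda-h}{\alpha}$) to verify $\|\partial_+ r'\|_\infty \leq \frac{2\lambda-h}{\alpha}$, and use the signs of the ramps to get $r' \geq 0$. Then count how many folds of sign $-\sigma$ occur at $t_1'$, possibly $t_2'$, and how many reverse folds occur at $t_3'$, $t_4'$, each by comparing the level $\mp\lambda$ with the linear rates $\frac{2\lambda-h}{\alpha}$. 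This shows $c_1' > c_0' \geq 1$, $c_2', c_4' \in \{0,1\}$ and $c_3' = c_1' + c_2'$. Hypotheses~2) and~4) ensure that $g$ changes monotonicity at most once and by less than $h$ over the relevant window of length $2\alpha$, so the fold counts differ by at most one. This is exactly why $c_2, c_4 \in \{0,1\}$.

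\textbf{Main obstacle.} The hardest part is establishing the timing constraint $t_3' > \max\{t_2', t_1' + \frac{\alpha}{2}\}$ and $t_3' < t_1' + \alpha$, i.e., that the reverse fold is triggered strictly inside the second half of the opposite ramp, together with the bound $\sigma'\zeta_{n'} < \lambda$ after $t_4'$. My first attempt is to write $\sigma\zeta$ on $[t_1', t_1' + \alpha]$ as an explicit piecewise linear function plus $g$. Its slope is at least $\frac{2\lambda-h}{\alpha}(c_1' - \text{active opposite ramps})$, and the level must climb by $2\lambda - O(h)$. With $h < \frac{\lambda}{2}$ and hypothesis~4), this forces the crossing time into $(\frac{\alpha}{2}, \alpha)$. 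Beyond that, I expect to need a case distinction on $c_2, c_4$.
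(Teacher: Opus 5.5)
Your overall plan (show that Assumption~\ref{as:Mod_j_continuation} reproduces itself with $\sigma$ replaced by $-\sigma$, then induct with an advance of at least $\frac{\alpha}{2}$ per cycle) is the paper's plan. Step~1 correctly gives $\kappa_{n+1}<\infty$ with $\sgn\zeta_n(\kappa_{n+1})=-\sigma$, up to two slips: the $t_3$- and $t_4$-ramps have not finished at $t_1+\alpha$, and $2h-2\lambda\le-\lambda$ needs $h\le\frac{\lambda}{2}$, not merely $h<\lambda$.

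The first genuine gap is the bookkeeping in Step~2. In the Assumption the $t_1,t_2$-folds have sign $-\sigma$ and the $t_3,t_4$-folds have sign $\sigma$. With $\sigma'=-\sigma$, the new folds at $\kappa_{n+1}$ (sign $-\sigma=\sigma'$) must therefore be $t_3'$-type folds, not $t_1'$. Absorbing the $c_3$-tail into $r'$ also fails, for two reasons:
\begin{itemize}
\item That tail has slope $c_3\frac{2\lambda-h}{\alpha}\ge 2\frac{2\lambda-h}{\alpha}$, since $c_3\ge c_1>c_0\ge1$. So $\|\partial_+r'\|_\infty\le\frac{2\lambda-h}{\alpha}$ is violated, and in the frame $\sigma'=-\sigma$ the tail even has the wrong sign. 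Hypothesis~5 bounds only $g'$ and cannot repair this.
\item More importantly, the $c_3$- and $c_4$-ramps are what determine how many folds occur at $\kappa_{n+1}$, so they must stay explicit.
\end{itemize}
The paper instead sets $t_1'=t_3$, $t_2'=t_4$, $c_1'=c_3$, $c_2'=c_4$ and $c_0'=c_1+c_2-c_0$, so that $c_1'-c_0'=c_0\ge1$. The new fold is $t_3'=\kappa_{n+1}$, whose multiplicity is $c_3+c_4$ or $c_3+c_4+1$ (read off from $\partial_+\zeta_n(\kappa_{n+1})$ and hypothesis~5). Then $c_3'=c_3+c_4$, and $c_4'\in\{0,1\}$ accounts for an extra fold at $\kappa_{n+1}$ or a later one before $t_3+\alpha$. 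Finally, $r'$ is either $0$ or the residual $c_2(2\lambda-h)(1-j_\alpha(\cdot-t_2))$ of the $c_2$-ramp, which is admissible only because $c_2\le1$. Hypothesis~2 is used exactly in the case $\kappa_{n+1}<t_2+\alpha$, to control the sign of $g'(\kappa_{n+1})$.

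The second gap is the timing constraint $\kappa_{n+1}\in(\max\{t_4,t_3+\frac{\alpha}{2}\},t_3+\alpha)$, i.e.\ $t_3'\in(\max\{t_2',t_1'+\frac{\alpha}{2}\},t_1'+\alpha)$. This is the heart of the proof, and you have not established it.
\begin{itemize}
\item For the upper bound, nothing in Step~2 gives $\kappa_{n+1}<t_3+\alpha$. Evaluate at $t_3+\alpha$ instead: the bracket there is $c_0+c_4j_\alpha(\cdot)\ge c_0$, and this is where $h<\frac{\lambda}{2}$ is used.
\item For the lower bound, your slope sketch cannot work. A lower bound on the slope only bounds the crossing time from above, whereas $\kappa_{n+1}>t_3+\frac{\alpha}{2}$ needs an upper bound on how fast $-\sigma\zeta_n$ rises.
\end{itemize}
On $(t_3,t_1+\alpha]$ the paper gets $-\sigma\zeta_n<\lambda$ from the exact cancellation $c_1+c_2-c_3=0$ of the ramp slopes, combined with $\sigma\zeta_n(t_3)=\lambda$, $|g(t)-g(t_3)|<h$, $r(t_3)-r(t)\le\frac{2\lambda-h}{\alpha}(t-t_3)$, $c_4j_\alpha(t-t_4)\le\frac{t-t_3}{\alpha}$ and $t-t_3<\frac{\alpha}{2}$.

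Be aware that on $(t_1+\alpha,t_3+\frac{\alpha}{2}]$ this cancellation is gone, because the $c_1$-ramp has stopped while the $c_3$-ramp has not. The paper's ``in the same way'' therefore does not cover that interval. Concretely, take $c_0=1$, $c_1=c_3=2$, $c_2=0$, $c_4=1$, $t_4=t_3$, $r\equiv0$, and $-\sigma g$ slowly decreasing with values in $(\frac h2,h)$. Then $t_3-t_1\in(\frac{\alpha}{2},\alpha)$ is fixed by $\sigma\zeta_n(t_3)=\lambda$, and the formula in the Assumption gives $-\sigma\zeta_n(t_3+\frac{\alpha}{2})=-\sigma g(t_3+\frac{\alpha}{2})+\lambda-\frac h2>\lambda$. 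Any completed argument must therefore either exclude such configurations or work with a weaker invariant.
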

    
\begin{proof}
	Without loss of generality, assume that $\sigma = -1$.
	Then, for $t \in (t_3, t_1 + \alpha]$ we get
	\begin{align*}
		\zeta_n(t) &= \zeta_n(t) - \zeta_n(t_3) - \lambda \\
		&= -\lambda + g(t) - g(t_3) + r(t_3) - r(t) \\
			&\quad -(2\lambda - h) \bigl(c_1 \, j_\alpha(t - t_1) + c_2 \, j_\alpha(t - t_2) \\
			&\quad - c_3 \, j_\alpha(t - t_3) - c_4 \, j_\alpha(t - t_4) - c_1 \, j_\alpha(t_3 - t_1) \\
			&\quad - c_2 \, j_\alpha(t_3 - t_2)\bigr) \\
		&< -\lambda + h  -r(t) + r(t_3) + (2\lambda - h) c_4 \, j_\alpha(t - t_4) \\
		&\leq -\lambda + h + 2 \, \frac{2\lambda - h}{\alpha} \Bigl(t_1 + \alpha - t_1 - \frac{\alpha}{2}\Bigr) = \lambda.
	\end{align*}
    Therefore, we have $\kappa_{n+1} > t_1 + \alpha$.
    In the same way we obtain $\zeta_n(t) < \lambda$ for $t \in (t_3, t_3 + \frac{\alpha}{2}]$ and, hence, $\kappa_{n+1} > t_3 + \frac{\alpha}{2}$.
    On the other hand, we have 
	\begin{align*}
		\zeta_n(t_3 + \alpha) &= g(t_3 + \alpha) + (2\lambda - h) \, (c_0 + c_4 \, j_\alpha(t_3 + \alpha - t_4))\\
		&\geq -h + c_0 \, (2\lambda - h) > \lambda.
	\end{align*}
	Thus, we conclude that $\kappa_{n+1} \in (\max\{t_1 + \alpha, t_3 + \frac{\alpha}{2}\} , t_3 + \alpha)$ and $\sgn(\zeta_n(\kappa_{n+1})) = 1 = -\sigma = -\sgn(\zeta_n(\kappa_n))$.
	
	Let us first assume that $\kappa_{n+1} \in [t_2 + \alpha, t_3 + \alpha)$.
	Then, we have $\kappa_{n+1} = \ldots = \kappa_{n+m}$ with
	\begin{align*}
		 m &= \biggl\lfloor \frac{\partial_+ \zeta_n(\kappa_{n+1})}{\frac{2\lambda - h}{\alpha}} + 1\biggr\rfloor = \biggl\lfloor \frac{g^\prime(\kappa_{n+1})}{\frac{2\lambda - h}{\alpha}} + c_3 + c_4 + 1\biggr\rfloor \\
		&= \begin{cases}
			c_3 + c_4 +1 & \text{if } g^\prime(\kappa_{n+1}) \geq 0 \\
			c_3 + c_4 & \text{otherwise}.
		\end{cases}
	\end{align*}
	For $m = c_3+c_4+1$ setting $\tilde{t}_3 = \tilde{t}_4 = \kappa_{n+m}$, $\tilde{t}_1 = t_3$, $\tilde{t}_2 = t_4$, $\tilde{c_0} = c_1 + c_2 - c_0$, $\tilde{c}_1 = c_3$, $\tilde{c}_2 = c_4$, $\tilde{c}_3 = c_3 + c_4$, $\tilde{c}_4 = 1$ as well as $\tilde{r}(t) = 0$ gives
	\begin{equation}\label{eq:zeta_n+m}
		\begin{aligned}
			\zeta_{n+m}(t) &= g(t) + \tilde{r}(t) - (2\lambda - h) \bigl(\tilde{c_0} -   \tilde{c}_1 \, j_\alpha(t - \tilde{t}_1) \\
				&\quad - \tilde{c}_2 \, j_\alpha(t - \tilde{t}_2) + \tilde{c}_3 \, j_\alpha(t - \tilde{t}_3) \\
				&\quad + \tilde{c}_4 \, j_\alpha(t -\tilde{t}_4)\bigr)
		\end{aligned}
	\end{equation}
	for all $t \geq \tilde{t}_3$ and $\zeta_{n+m}(t) < \lambda$ for all $t > \tilde{t}_4$ as $|g^\prime(t)|< \frac{2\lambda - h}{\alpha}$ for all $t \geq t_1$.
	Since $\sgn(\zeta_n(\kappa_n)) = -\sgn(\zeta_n(\kappa_{n+1}))$, we have $\tilde{t}_3 = \kappa_{n+m} = \kappa_{n+1} > \kappa_n = t_4 = \tilde{t}_2$.
	Moreover, $\tilde{t}_3 = \kappa_{n+m} = \kappa_{n+1} \geq t_3 + \frac{\alpha}{2} = \tilde{t}_1 + \frac{\alpha}{2}$.
	For $m = c_3 + c_4$ we distinguish two cases.
	If there exists no $t \in (\kappa_{n+m}, t_3 + \alpha)$ with $\zeta_{n+m}(t) = \lambda$, we get the same equation for $\zeta_{n+m}$ as in \eqref{eq:zeta_n+m} with $\tilde{c}_4 = 0$.
	On the other hand, if $t \in (\kappa_{n+m}, t_3 + \alpha)$ exists with $\zeta_{n+m}(t) = \lambda$, then $\kappa_{n+m+1}$ must be the first such point and setting $\tilde{t}_3 = \kappa_{n+m}$, $\tilde{t}_4 = \kappa_{n+m+1}$, $\tilde{t}_1 = t_3$, $\tilde{t}_2 = t_4$, $\tilde{c_0} = c_1 + c_2 - c_0$, $\tilde{c}_1 = c_3$, $\tilde{c}_2 = c_4$, $\tilde{c}_3 = c_3 + c_4$, $\tilde{c}_4 = 1$ as well as $\tilde{r}(t) = 0$ gives
	\begin{equation}\label{eq:zeta_n+m+1}
		\begin{aligned}
			\zeta_{n+m+1}(t) &= g(t) + \tilde{r}(t) - (2\lambda - h) \bigl(\tilde{c_0} -   \tilde{c}_1 \, j_\alpha(t - \tilde{t}_1) \\
				&\quad - \tilde{c}_2 \, j_\alpha(t - \tilde{t}_2) + \tilde{c}_3 \, j_\alpha(t - \tilde{t}_3) \\
				&\quad + \tilde{c}_4 \, j_\alpha(t - \tilde{t}_4)\bigr)
		\end{aligned}
	\end{equation}
	for all $t \geq \tilde{t}_3$ and $\zeta_{n+m+1}(t) < \lambda$ for all $t > \tilde{t}_4$.
	As before, we obtain $\tilde{t}_3 > \tilde{t}_2$ and $\tilde{t}_3 > \tilde{t}_1 + \frac{\alpha}{2}$.
	
	Let us now assume that $\kappa_{n+1} \in (t_3 + \frac{\alpha}{2}, t_2 + \alpha)$, which can only happen if $c_2 = 1$ and $t_1 < t_2$.
	As $|g^\prime(t)| < \frac{2\lambda - h}{\alpha}$ for all $t \geq t_1$ and $j_\alpha(s) = \frac{s}{\alpha}$ for $s \in [0,\alpha]$, we get $g^\prime(t_1) < 0$ and $g^\prime(t_2) \geq 0$ so that there exists $\tilde{t} \in (t_1, t_2]$ with $g^\prime(\tilde{t}) = 0$.
	Using 2) yields $g^\prime(t) > 0$ for $t \in (\tilde{t},\tilde{t}+2\alpha)$ and, hence, also for $t \in (t_2, t_2 + \alpha)$ so that, in particular, $g^\prime(\kappa_{n+1}) \geq 0$ and $\zeta_n$ is monotonously increasing on $(t_1 + \alpha, t_2 + \alpha)$.
	Therefore, we have $\kappa_{n+1} = \ldots = \kappa_{n+m}$ with
	\begin{equation*}
		\begin{aligned}
			m &= \biggl\lfloor \frac{\partial_+ \zeta_n(\kappa_{n+1})}{\frac{2\lambda - h}{\alpha}} + 1\biggr\rfloor = \biggl\lfloor \frac{g^\prime(\kappa_{n+1})}{\frac{2\lambda - h}{\alpha}} -c_2 + c_3 + c_4 + 1\biggr\rfloor \\
			&= c_3 + c_4.
		\end{aligned}
	\end{equation*} 
	If there exists no $t \in (\kappa_{n+m}, t_3 + \alpha)$ with $\zeta_{n+m}(t) = \lambda$, we get the same equation for $\zeta_{n+m}$ as in \eqref{eq:zeta_n+m} with $\tilde{c}_4 = 0$ and $\tilde{r}(t) = c_2 (2\lambda - h) (1 - j_\alpha(t - t_2))$.
	If there does exist $t \in (\kappa_{n+m}, t_3 + \alpha)$ with $\zeta_{n+m}(t) = \lambda$, then $\kappa_{n+m+1}$ must be the first such point and we get the same equation for $\zeta_{n+m+1}$ as in \eqref{eq:zeta_n+m+1} with $\tilde{r}(t) = c_2 (2\lambda - h) (1 - j_\alpha(t - t_2))$. 
	
	All in all, Assumption~\ref{as:Mod_j_continuation} is satisfied for $\zeta_{n+m+\ell}$ with opposite sign $-\sigma$ and folding points $\kappa_{m_3}$, $\kappa_n$, $\kappa_{n+m}$, $\kappa_{n+m+\ell}$, where either $\ell = 0$ or $\ell = 1$.
    In particular, we have $\kappa_{n+m} \in [\kappa_n + \frac{\alpha}{2}, \kappa_n + \alpha)$.
    Repeating this argument shows that for all $t \in \R$ there exists some $q \in \N$ with $t < \kappa_q < \infty$.
\end{proof}

	We wish to stress that endless folding does not occur only in a niche folding regime.
	In fact, for any choice of parameters $\lambda>0$, $h<\frac{\lambda}{2}$ and $\alpha>0$ one can find functions satisfying the assumptions of Proposition~\ref{prop:Mod_j_continuation}.
    For example, for $\lambda = 1$, $h=0.35$ and $\alpha = \frac{\pi}{2}$ the assumptions are satisfied for the benign function $g(t) = \sinc(t)$.

\subsection{Transient dependent folding points and reset time}
	
The definition of the generalized modulo operator requires the rather restrictive condition~\eqref{eq:fold_con2} on the folding function $j_\alpha$ with $\alpha > 0$, which, in particular, implies that $j_\alpha$ cannot be differentiable at $0$, whereas it may be arbitrarily smooth at~$\alpha$.
We now modify the modulo operator $\Mod_\lambda^{h,j_\alpha}$ to allow for more general folding functions not necessarily satisfying~\eqref{eq:fold_con2}.
To this end, we introduce a minimal reset time $\sigma > 0$ between two consecutive folding points.
	
\begin{definition} \label{def:Mod_reset}
	Let $\lambda > 0$, $h<2\lambda$, $\sigma > 0$ and let $j_\alpha$ be a folding function with transient $\alpha \geq 0$.
	For a function $g \in  \Cont_{\lambda,\tau_0}^{0,1}$ with $\tau_0 \in \R$ we define the two sequences $(\nu_n)_{n \in \N_0}$ and $(\xi_n)_{n \in \N_0}$ via
	\begin{equation} \label{eq:Mod_reset_kappa}
		\begin{gathered}
			\nu_{n+1} = \inf \bigl\{t > \nu_n + \sigma \bigm| |\xi_n(t)| \geq \lambda\bigr\},\\
			\xi_{n+1} = \xi_n - (2\lambda - h) \sgn(\xi_n(\nu_{n+1})) \, j_\alpha(\cdot - \nu_{n+1})
		\end{gathered}
	\end{equation}
	and $\xi_{n+1} = \xi_n$ if $\nu_{n+1} = \infty$, where $\nu_0 = \tau_0-\sigma$ and $\xi_0 = g$.
	Then, the output of the {\em delayed generalized modulo operator} $\Mod_{\lambda,\sigma}^{h,j_\alpha}$ is defined as
	\begin{equation} \label{eq:Mod_reset}
		\Mod_{\lambda,\sigma}^{h,j_\alpha} g(t) = \lim\nolimits_{n \to \infty} \xi_n(t).
	\end{equation}
\end{definition}

\begin{figure}
	\centering
	\includegraphics[width=\linewidth]{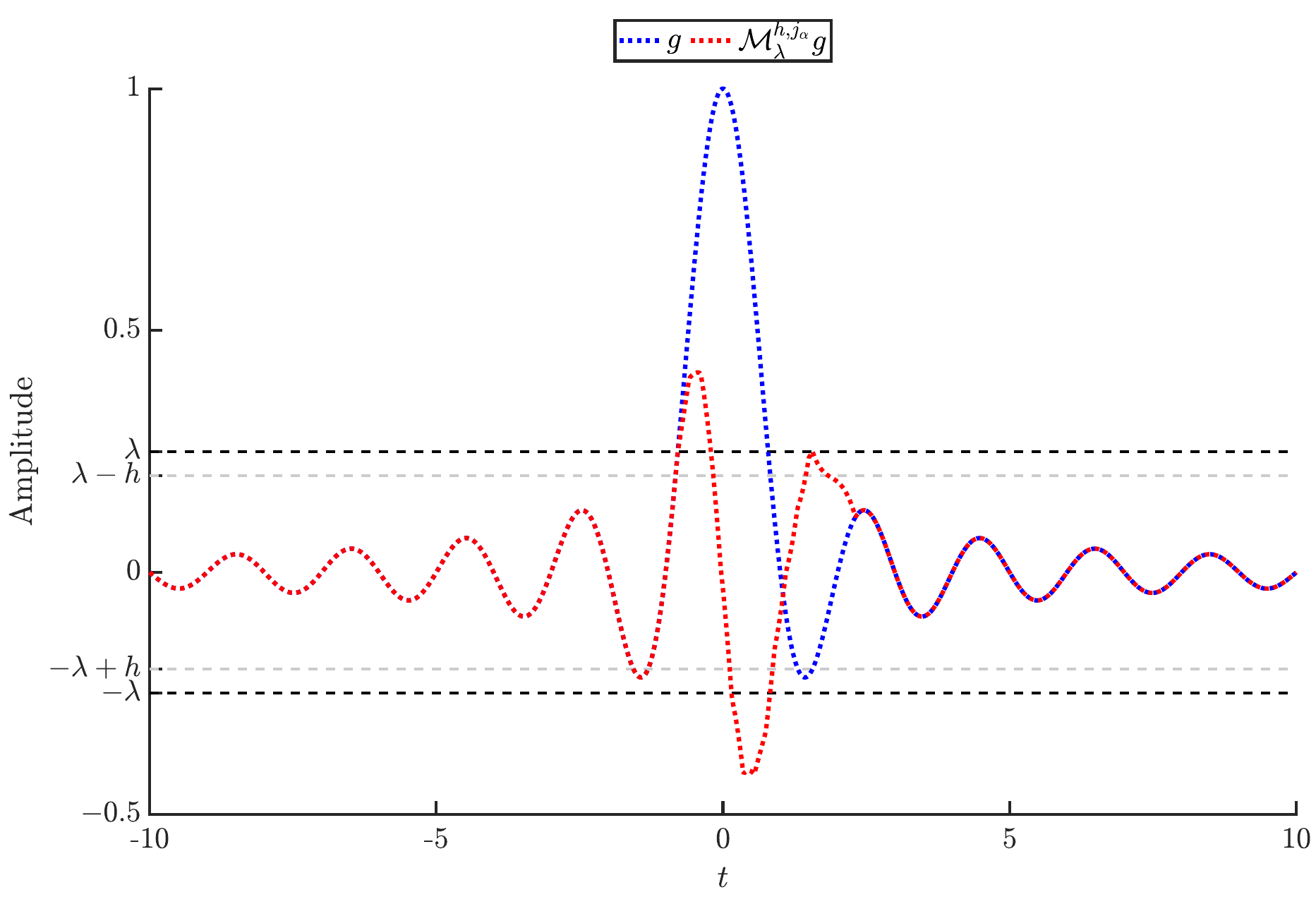}
	\caption{Illustration of $\Mod_{\lambda,\sigma}^{h,j_\alpha} g(t)$ using $g(t) = \sinc(\pi t)$, where we choose $j_\alpha(t) = \frac{t}{\alpha}$ for $t \in [0,\alpha]$, $\lambda = 0.25$, $h = 0.05$, $\alpha = 0.75$ and $\sigma = 0.2$.}
	\label{fig:Mod_reset}
\end{figure}

As we now enforce a minimum spacing of $\sigma > 0$ between folding points, for each $t \in \R$ we can find $n \in \N$ so that $t \leq \tau_0 + n \cdot \sigma$ and $t \leq \nu_{n+1}$.
This implies $\Mod_{\lambda,\sigma}^{h,j_\alpha} g(t) = \xi_{n+1}(t)$ and, hence, the well-definedness of $\Mod_{\lambda,\sigma}^{h,j_\alpha}$ without additional assumptions. 
We now collect further properties of $\Mod_{\lambda,\sigma}^{h,j_\alpha}$.

\begin{proposition}
	Let $g \in  \Cont_{\lambda,\tau_0}^{0,1}$.
	Then, $|\Mod_{\lambda,\sigma}^{h,j_\alpha} g(\nu_n)| \geq \lambda$ for all $\nu_n < \infty$ and $|\Mod_{\lambda,\sigma}^{h,j_\alpha}g(t)| < \lambda$ for all $t \in [\nu_n + \sigma, \nu_{n+1})$.
\end{proposition}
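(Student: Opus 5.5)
The plan is to mirror the first part of the proof of Theorem~\ref{theo:Mod_j_range_separation}. The key observation is that the limit in~\eqref{eq:Mod_reset} freezes to the left of later folding points. Since $\nu_{k+1} > \nu_k + \sigma$, the sequence $(\nu_k)$ is strictly increasing. Moreover, $j_\alpha(t - \nu_k) = 0$ for $t \leq \nu_k$ by Definition~\ref{def:fold_fun}, and for $\alpha = 0$ also for $t < \nu_k$. Hence every update $\xi_{k} - \xi_{k-1}$ vanishes on $(-\infty,\nu_k)$, and in the case $\alpha>0$ even at $\nu_k$ itself. Consequently
\begin{equation*}
	\Mod_{\lambda,\sigma}^{h,j_\alpha} g(t) = \xi_n(t) \quad \text{for all } t < \nu_{n+1},
\end{equation*}
and for $\alpha > 0$ also $\Mod_{\lambda,\sigma}^{h,j_\alpha} g(\nu_{n+1}) = \xi_n(\nu_{n+1})$. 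Here the convention $\nu_{n+1} = \infty$ covers the case where no further folds occur, and we use the well-definedness established above (for every $t$ the limit equals some finite $\xi_{n+1}(t)$).

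For the first claim, take $\nu_n < \infty$ with $n \geq 1$. Each $\xi_k$ is continuous, because $g$ and $j_\alpha$ are continuous for $\alpha > 0$. By definition of the infimum, $\nu_n$ is the limit of points $t > \nu_{n-1} + \sigma$ with $|\xi_{n-1}(t)| \geq \lambda$, so continuity gives $|\xi_{n-1}(\nu_n)| \geq \lambda$. The freezing identity then gives $|\Mod_{\lambda,\sigma}^{h,j_\alpha} g(\nu_n)| = |\xi_{n-1}(\nu_n)| \geq \lambda$.

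The case $\alpha = 0$ needs separate care. There $j_0 = \ind_{[0,\infty)}$ acts already at $\nu_n$, so the output at $\nu_n$ equals $\xi_n(\nu_n) = \xi_{n-1}(\nu_n) - (2\lambda - h)\sgn(\xi_{n-1}(\nu_n))$. In this case I would have to check that the statement is intended for $\alpha > 0$, or interpret the value at the fold as the left limit. I expect this boundary issue, together with the continuity of $\xi_{n-1}$ at $\nu_n$, to be the main subtlety. I would restrict the argument to $\alpha > 0$ and remark on $\alpha = 0$ separately.

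For the second claim, let $t \in [\nu_n + \sigma, \nu_{n+1})$. Then $t < \nu_{n+1}$, so the output at $t$ equals $\xi_n(t)$.

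If $t > \nu_n + \sigma$ and $|\xi_n(t)| \geq \lambda$, then $t$ belongs to the set defining $\nu_{n+1}$, which forces $\nu_{n+1} \leq t$, a contradiction. Hence $|\xi_n(t)| < \lambda$.

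The endpoint $t = \nu_n + \sigma$ is excluded from that set because the inequality $t > \nu_n + \sigma$ is strict, so it needs a separate argument. Suppose $|\xi_n(\nu_n+\sigma)| \geq \lambda$. If the inequality is strict, continuity keeps $|\xi_n| > \lambda$ on a right neighbourhood, so $\nu_{n+1} = \nu_n + \sigma$ and the interval is empty. If $|\xi_n(\nu_n + \sigma)| = \lambda$ exactly and $|\xi_n|$ drops below $\lambda$ immediately afterwards, the claim can fail. I would handle this by noting that the statement then holds with the half-open interval replaced by $(\nu_n + \sigma, \nu_{n+1})$, or by arguing that this degenerate equality case is implicitly excluded.

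For $n = 0$ we have $\nu_0 + \sigma = \tau_0$, and the same reasoning applies. The endpoint $t = \tau_0$ is now harmless, since $|g(\tau_0)| < \lambda$ by the assumption $g \in \Cont_{\lambda,\tau_0}^{0,1}$ and $\xi_0 = g$.
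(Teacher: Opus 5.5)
Your argument is the paper's own: the limit is frozen to $\xi_{n-1}$ at $\nu_n$ and to $\xi_n$ on $[\nu_n+\sigma,\nu_{n+1})$; continuity places $\nu_n$ in the closure of the defining superlevel set; and a point with $|\xi_n|\geq\lambda$ before $\nu_{n+1}$ contradicts the infimum. Both of your caveats are real defects of the statement that the paper's proof passes over: for $\alpha=0$ and $0<h<2\lambda$ one gets $|\Mod_{\lambda,\sigma}^{h,j_\alpha} g(\nu_1)|=|h-\lambda|<\lambda$, and at $t=\nu_n+\sigma$ the paper's step $s\in\overline{\{t>\nu_n+\sigma \mid |\xi_n(t)|\geq\lambda\}}$ is unjustified (for instance with $j_\alpha^{(3)}$, where $\xi_1$ first rises above $\lambda$, choosing $\sigma$ so that $\nu_1+\sigma$ is a transversal down-crossing of $\xi_1$ through $\lambda$ gives $|\Mod_{\lambda,\sigma}^{h,j_\alpha} g(\nu_1+\sigma)|=\lambda$), so your restriction to $\alpha>0$, $n\geq 1$ and the open interval (or the nondegenerate case) is exactly what can be proved.
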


\begin{proof}
	Assume that $\nu_n < \infty$ and $|\Mod_{\lambda,\sigma}^{h,j_\alpha} g(\nu_n)| < \lambda$.
	By definition this also implies $|\xi_{n-1}(\nu_n)| < \lambda$ and, therefore, $\nu_n \notin \overline{\{t > \nu_{n-1} + \sigma \mid |\xi_{n-1}(t)| \geq \lambda\}}$ in contradiction to the definition of $\nu_n$.
	Now, assume that there exists $s \in [\nu_n + \sigma , \nu_{n+1})$ with $|\Mod_{\lambda,\sigma}^{h,j_\alpha} g(s)| \geq \lambda$.
	This also implies $|\xi_n(s)| \geq \lambda$ and, hence, $s \in  \overline{\{t > \nu_n + \sigma \mid |\xi_n(t)| \geq \lambda\}}$.
	But this gives $s \geq \nu_{n+1}$ in contradiction to the assumption.
\end{proof}

While well-definedness of $\Mod_{\lambda,\sigma}^{h,j_\alpha}$ does not require the conditions \eqref{eq:fold_con1} and \eqref{eq:fold_con2}, to guarantee a bounded range and the return property, we need additional assumptions that are weaker than \eqref{eq:fold_con1} and \eqref{eq:fold_con2}, but result in a tension between modelling flexibility and theoretical guarantees.

\begin{theorem}
	Let $0 < \sigma \leq \alpha$ and $j_\alpha$ satisfy
	\begin{equation}\label{eq:fold_con3}
		\forall \, t \in [0,\alpha] \colon ~ j_\alpha(t) \leq 1.
		\tag{$\text{C}_1^\ast$}
	\end{equation}
	and
	\begin{equation}\label{eq:fold_con4}
		\forall \, s \leq t \in [\sigma,\alpha] \colon ~ 0 \leq \partial_+ j_\alpha(t) \leq \partial_+ j_\alpha(s)
		\tag{$\text{C}_2^\ast$}
	\end{equation}
	Moreover, let $g \in \Cont_{\lambda,\tau_0}^{1,1}$ satisfy $\|g^\prime\|_\infty \leq (2\lambda - h) \cdot \frac{j_\alpha(\sigma)}{\sigma}$ and $\|g^{\prime\prime}\|_\infty \leq \frac{2 h}{\alpha^2}$.
	Then, we have $\Mod_{\lambda,\sigma}^{h,j_\alpha} g(t) \in [-\lambda-r,\lambda+r]$ with 
	\begin{equation*}
		r = \sup_{t \in (0,\sigma)} \bigl(t \cdot \|g^\prime\|_\infty - (2\lambda - h) \cdot j_\alpha(t)\bigr) \geq 0,
	\end{equation*}
	and
	\begin{equation*}
		\Mod_{\lambda,\sigma}^{h,j_\alpha} g(\nu_n) \cdot \Mod_{\lambda,\sigma}^{h,j_\alpha} g(\nu_{n+1}) < 0
		\implies
		\nu_{n+1} - \nu_n \geq \alpha
	\end{equation*}
	as well as $\Mod_{\lambda,\sigma}^{h,j_\alpha} g(\nu_n) = \pm \lambda$ for all $n \in \N$ with $\nu_n < \infty$.
	If, in addition, $g \in \B^{\lambda-h}_\rho$ with $\rho > \tau_0$, we further obtain $\Mod_{\lambda,\sigma}^{h,j_\alpha} g(t) = g(t)$ for all $t \geq \rho + 2\alpha$.
\end{theorem}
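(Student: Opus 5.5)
I will follow the blueprint of Theorem~\ref{theo:Mod_j_range_separation} and Proposition~\ref{prop:Mod_j_return}. The reset time $\sigma$ plays the role that condition~\eqref{eq:fold_con2} played near the origin. Throughout I use the representation $\Mod_{\lambda,\sigma}^{h,j_\alpha} g(t) = \xi_n(t)$ for $t \leq \nu_{n+1}$, which follows from the well-definedness remark after Definition~\ref{def:Mod_reset}.

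\emph{Step 1: exact values at folding points.} I will show $\Mod_{\lambda,\sigma}^{h,j_\alpha} g(\nu_{n+1}) = \pm\lambda$ by induction. Suppose $|\xi_n(\nu_n)| = \lambda$ with sign $s$, say $s=1$. On $[\nu_n,\nu_n+\sigma]$ we have
\begin{equation*}
\xi_n(t) = \lambda + g(t)-g(\nu_n) - (2\lambda-h)\,j_\alpha(t-\nu_n) - \bigl(\text{increments of earlier folds}\bigr).
\end{equation*}
Using $\|g'\|_\infty \leq (2\lambda-h)\,j_\alpha(\sigma)/\sigma$, I will check that $\xi_n(\nu_n+\sigma) \leq \lambda$. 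The increments of older folds, together with the separation from Step 3, have the right sign to help. Hence the first time after $\nu_n+\sigma$ at which $|\xi_n|\geq\lambda$ is a genuine crossing by continuity, so $|\xi_n(\nu_{n+1})| = \lambda$.

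\emph{Step 2: the range bound.} For $t\in[\nu_n+\sigma,\nu_{n+1})$, the preceding proposition already gives $|\Mod g(t)|<\lambda$. For $t\in(\nu_n,\nu_n+\sigma)$, the output equals
\begin{equation*}
\pm\lambda + \bigl(g(t)-g(\nu_n)\bigr) \mp (2\lambda-h)\,j_\alpha(t-\nu_n) + (\text{earlier-fold increments}).
\end{equation*}
The first two terms exceed $\pm\lambda$ by at most $(t-\nu_n)\|g'\|_\infty - (2\lambda-h)\,j_\alpha(t-\nu_n) \leq r$. The other direction is handled by~\eqref{eq:fold_con3}: the folded amount is at most $2\lambda-h$, giving the lower bound $-\lambda$ exactly as in the proof of Theorem~\ref{theo:Mod_H_range}. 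I must also argue that incomplete earlier folds in the same direction only push toward the interior of the range. Finally, $r\geq 0$ follows by letting $t\to 0^+$, since $j_\alpha(0)=0$.

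\emph{Step 3: separation of sign changes.} This is the main obstacle. I will copy the argument of Theorem~\ref{theo:Mod_j_range_separation} and estimate $\xi_{m-1}(t)$ for $t>\nu_{m-1}$, where $\nu_m$ is the first subsequent fold of opposite sign. The difficulty is that~\eqref{eq:fold_con2} previously gave $g'(\nu_{m-1}) \geq \partial_+\bigl(\sum_{\text{earlier}} (2\lambda-h)j_\alpha(\cdot-\kappa)\bigr)$ via monotonicity of $\partial_+ j_\alpha$ on all of $[0,\alpha]$. Now monotonicity is available only on $[\sigma,\alpha]$. However, every earlier fold $\kappa$ in the window $(\nu_{m-1}-\alpha,\nu_{m-1})$ satisfies $\nu_{m-1}-\kappa\geq\sigma$ by the reset time, so only arguments in $[\sigma,\alpha]$ enter and~\eqref{eq:fold_con4} suffices. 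I then obtain
\begin{equation*}
\xi_{m-1}(t) \geq -\lambda + h - \tfrac{h}{\alpha^2}\,(t-\nu_{m-1})^2
\end{equation*}
using $\|g''\|_\infty\leq 2h/\alpha^2$ and $j_\alpha\leq 1$. This forces $\nu_m\geq\nu_{m-1}+\alpha$. The remaining delicate point is the justification that $g'(\nu_{m-1})$ dominates these derivatives; for this I will use that $\xi_{m-2}$ reaches $\lambda$ from below at $\nu_{m-1}$.

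\emph{Step 4: return property.} With Steps 1–3 in hand, the four-case analysis of Proposition~\ref{prop:Mod_j_return} transfers verbatim. I replace $\kappa_n$ by $\nu_n$ and the bound $[-\lambda,\lambda]$ by the fact that output values at complete-fold times $\nu_k+\alpha$ are exact integer shifts of $g$ lying in $[-\lambda-r,\lambda+r]$. In Case~4 the contradiction value $-3\lambda+h$ must lie below $-\lambda-r$. Since $r\leq\sigma\|g'\|_\infty \leq (2\lambda-h)\,j_\alpha(\sigma)\leq 2\lambda-h$, we get $-\lambda-r \geq -3\lambda+h$, with equality possible only in the extreme case. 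That extreme case I will rule out using the strict inequality from $\B^{\lambda-h}_\rho$.
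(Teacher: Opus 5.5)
Your Steps 1--3 follow the paper's own architecture. Both run a joint induction over the folding points. Both get the upper bound $\lambda+r$ on $(\nu_n,\nu_n+\sigma)$ because older folds in the window lie at distance at least $\sigma$, where $j_\alpha$ is nondecreasing. Both get the lower bound $\xi_n(t)\geq-\lambda+h-\frac{h}{\alpha^2}(t-\nu_n)^2$ from $\|g''\|_\infty\leq\frac{2h}{\alpha^2}$ and concavity of $j_\alpha$ on $[\sigma,\alpha]$. And both then transfer Proposition~\ref{prop:Mod_j_return}.

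The step that fails as you wrote it is Step~4. In Cases~1--3 one evaluates at a time $t^\ast\geq\rho$ at which all folds are complete. There $\Mod_{\lambda,\sigma}^{h,j_\alpha}g(t^\ast)=g(t^\ast)+k(2\lambda-h)$ with $k\in\Z$ and $|g(t^\ast)|<\lambda-h$. For $|k|=1$ this gives $\lambda<|\Mod_{\lambda,\sigma}^{h,j_\alpha}g(t^\ast)|<3\lambda-2h$, which is compatible with the range $[-\lambda-r,\lambda+r]$ as soon as $r>0$. So replacing $[-\lambda,\lambda]$ by $[-\lambda-r,\lambda+r]$ does not force $k=0$, yet you only adjusted Case~4. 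The paper merely says ``as in the proof of Proposition~\ref{prop:Mod_j_return}''. What actually makes the transfer work is the sharper bound from the proposition stated just before the theorem. Because $\sigma\leq\alpha$, the times $\rho$, $\nu_n+\alpha$ and $\nu_{n+1}+\alpha$ used in Cases~1--3 either lie in an interval $[\nu_k+\sigma,\nu_{k+1})$, where $|\Mod_{\lambda,\sigma}^{h,j_\alpha}g|<\lambda$, or coincide with a folding point, where the value is $\pm\lambda$. Either way $|k|\geq 1$ is excluded. In Case~4, your estimate $r\leq\sigma\|g'\|_\infty$ tacitly uses $j_\alpha\geq 0$ on $(0,\sigma)$. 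Neither $(\text{C}_1^\ast)$ nor $(\text{C}_2^\ast)$ guarantees this, so $-3\lambda+h\leq-\lambda-r$ is not justified as stated.

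A second, smaller problem is the lower bound in Step~2. It is not ``exactly as in Theorem~\ref{theo:Mod_H_range}''. There, $g(t)\geq g(\tau_p)-h$ on $(\tau_p,\tau_{p+1})$ is built into the definition of $\tau_{p+1}$, and nothing of the kind is encoded in $\nu_{n+1}$. Incomplete earlier folds of the same sign also do not only push toward the interior. They keep driving $\xi_n$ downward on top of the new fold's $2\lambda-h$, so at the lower edge they work against you. What rescues the bound is exactly your Step~3 estimate, in which $g'(\nu_n)$ dominates the right derivatives of those folds. That estimate therefore has to be proved at every folding point $\nu_n$, as the paper does in its induction step, not only at the last one before a sign change. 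It then also supplies $\xi_n(\nu_n+\sigma)\geq-\lambda$. Step~1 needs this alongside $\xi_n(\nu_n+\sigma)\leq\lambda$ to conclude $|\xi_n(\nu_{n+1})|=\lambda$.
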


\begin{proof}
	By definition, we have $\Mod_{\lambda,\sigma}^{h,j_\alpha}g(t) = g(t) \in (-\lambda,\lambda)$ for all $t < \nu_1$ and $g(\nu_1) = \pm \lambda$ if $\nu_1 < \infty$ as $\nu_1 = \kappa_1$.
	Without loss of generality let $g(\nu_1) = \lambda$ so that $g^\prime(\nu_1) \geq 0$.
	For $t \in (\nu_1, \nu_1 + \sigma)$ we obtain
	\begin{align*}
		\Mod_{\lambda,\sigma}^{h,j_\alpha}g(t) &= g(t) - (2\lambda - h) \cdot j_\alpha(t - \nu_1) \\
		&= \lambda - g(\nu_1) + g(t) - (2\lambda - h) \cdot j_\alpha(t - \nu_1) \\
		&\leq \lambda + (t - \nu_1) \cdot \|g^\prime\|_\infty - (2\lambda - h) \cdot j_\alpha(t - \nu_1) \\
		&\leq \lambda + r
	\end{align*}
	and $g^\prime(\nu_1) \geq 0$ in combination with~\eqref{eq:fold_con3} gives
	\begin{align*}
		\Mod_{\lambda,\sigma}^{h,j_\alpha}g(t) &= \lambda - g(\nu_1) + g(t) - (2\lambda - h) \cdot j_\alpha(t - \nu_1) \\
		&\geq \lambda - \frac{\|g^{\prime\prime}\|_\infty}{2} \, (t - \nu_1)^2 - (2\lambda - h) \cdot j_\alpha(t - \nu_1) \\
		&\geq - \lambda + h - \frac{h}{\alpha^2} \, (t - \nu_1)^2 > -\lambda.
	\end{align*}
	Moreover,
	\begin{equation*}
		\Mod_{\lambda,\sigma}^{h,j_\alpha} g(\nu_1 + \sigma) \leq \lambda + \sigma \cdot \|g^\prime\|_\infty - (2\lambda - h) \cdot j_\alpha(\sigma) \leq \lambda
	\end{equation*}
	and
	\begin{equation*}
		\Mod_{\lambda,\sigma}^{h,j_\alpha} g(\nu_1 + \sigma) \geq \lambda - \frac{\sigma^2}{2} \, \|g^{\prime\prime}\|_\infty - (2\lambda - h) \cdot j_\alpha(\sigma) \geq -\lambda.
	\end{equation*}
	We already know that $|\Mod_{\lambda,\sigma}^{h,j_\alpha}g(t)| < \lambda$ for all $t \in [\nu_1 + \sigma, \nu_2)$ and, hence, $\Mod_{\lambda,\sigma}^{h,j_\alpha} g(t) \in [-\lambda,\lambda+r]$ for all $t < \nu_2$.
	Moreover, $\Mod_{\lambda,\sigma}^{h,j_\alpha} g(\nu_2) = \pm\lambda$ if $\nu_2 < \infty$ and $\Mod_{\lambda,\sigma}^{h,j_\alpha} g(\nu_2) = -\lambda$ can only happen if $\nu_2 \geq \nu_1 + \alpha$.
	
	Now let $2 \leq \ell \leq n \in \N$ such that $\nu_{\ell-1} + \alpha \leq \nu_\ell \leq \nu_n < \infty$ and $\Mod_{\lambda,\sigma}^{h,j_\alpha} g(\nu_\ell) = \ldots = \Mod_{\lambda,\sigma}^{h,j_\alpha} g(\nu_n) = \pm\lambda$.
	Without loss of generality assume that $\Mod_{\lambda,\sigma}^{h,j_\alpha} g(\nu_n) = \xi_{n-1}(\nu_n) = \lambda$.
	Then, for $t \in (\nu_n, \nu_n + \sigma)$ we get, as before and using the monotonicity of $j_\alpha$ on $[\sigma,\alpha]$ due to Assumption~\eqref{eq:fold_con4},
	\begin{align*}
		\Mod_{\lambda,\sigma}^{h,j_\alpha} g(t) &= \xi_{n-1}(t) - (2\lambda - h) \cdot j_\alpha(t - \nu_n) \\
		&= \lambda - \xi_{n-1}(\nu_n) + \xi_{n-1}(t) \\
		& \qquad - (2\lambda - h) \cdot j_\alpha(t - \nu_n) \\
		&\leq \lambda - g(\nu_n) + g(t) - (2\lambda - h) \cdot j_\alpha(t - \nu_n) \\
		&\leq \lambda + r
	\end{align*}
	and $\Mod_{\lambda,\sigma}^{h,j_\alpha} g(\nu_n + \sigma) \leq \lambda$.
	Additionally, the monotonicity of $\partial_+ j_\alpha$ on $[\sigma,\alpha]$ due to Assumption~\eqref{eq:fold_con4} gives $g^\prime(\nu_n) \geq \partial_+ \Bigl(\sum\nolimits_{\nu \in K \setminus \{\nu_n\}} (2\lambda - h) \cdot j_\alpha(t - \nu)\Bigr)$ for $t > \nu_n$ and with $K = \{\nu_m \in (\nu_n - \alpha, \nu_n]\}$.
	With this we get for $t > \nu_n$
	\begin{align*}
		\xi_n(t) &\geq \xi_n(\nu_n) + g^\prime(\nu_n) \, (t - \nu_n) - \frac{\|g^{\prime\prime}\|_\infty}{2} \, (t - \nu_n)^2 \\
		& \qquad - (2\lambda - h) \sum\nolimits_{\nu \in K} \bigl(j_\alpha(t - \nu) - j_\alpha(\nu_n - \nu)\bigr) \\
		&\geq \lambda  - \frac{h}{\alpha^2} \, (t - \nu_n)^2 - (2\lambda - h) \cdot j_\alpha(t - \nu_n) \\
		&\geq - \lambda + h - \frac{h}{\alpha^2} \, (t - \nu_n)^2.
	\end{align*}
	Therefore, for $t \in (\nu_n,\nu_n+\sigma]$ we have $\xi_n(t) \geq -\lambda$ and $\xi_n(t) = -\lambda$ can only occur for $t \geq \nu_n + \alpha$.
	We already know that $|\Mod_{\lambda,\sigma}^{h,j_\alpha}g(t)| < \lambda$ for all $t \in [\nu_n + \sigma, \nu_{n+1})$ and, hence, $\Mod_{\lambda,\sigma}^{h,j_\alpha} g(t) \in [-\lambda,\lambda+r]$ for all $t < \nu_{n+1}$.
	Moreover, $\Mod_{\lambda,\sigma}^{h,j_\alpha} g(\nu_{n+1}) = \pm\lambda$ if $\nu_{n+1} < \infty$ and $\Mod_{\lambda,\sigma}^{h,j_\alpha} g(\nu_{n+1}) = -\lambda$ can only happen if $\nu_{n+1} \geq \nu_n + \alpha$.
	
	In conclusion, we have $\Mod_{\lambda,\sigma}^{h,j_\alpha} g(t) \in [-\lambda-r,\lambda+r]$ for all $t \in \R$ and 
	\begin{equation*}
		\Mod_{\lambda,\sigma}^{h,j_\alpha} g(\nu_n) \cdot \Mod_{\lambda,\sigma}^{h,j_\alpha} g(\nu_{n+1}) < 0
		\implies
		\nu_{n+1} - \nu_n \geq \alpha
	\end{equation*}
	as well as $\Mod_{\lambda,\sigma}^{h,j_\alpha} g(\nu_n) = \pm\lambda$ if $\nu_n < \infty$.
    With this, for $g \in \B^{\lambda-h}_\rho$ with $\rho > \tau_0$ finally follows that $\Mod_{\lambda,\sigma}^{h,j_\alpha} g(t) = g(t)$ for all $t \geq \rho + 2\alpha$ as in the proof of Proposition~\ref{prop:Mod_j_return}.
\end{proof}

\section{Comparison of the Modulo Operators} \label{sec:comparison}

\begin{table*}[t]
    \caption{Conditions for the modulo operators to guarantee certain properties for $g \in \Cont_{\lambda,\tau_0}^{0,1}$ ("--", if no further conditions are needed).}
	\label{tab:Mod_comparison}
    \centering
	\begin{tabular}{llccc}
		\toprule
		& & $\Mod_H g$ & $\Mod_\lambda^{h,j_\alpha} g$ & $\Mod_{\lambda,\sigma}^{h,j_\alpha} g$ \\
		\midrule
		well-definedness & condition & -- & \eqref{eq:fold_con2} & -- \\
		\midrule
		\multirow{3}*{folding at $\pm \lambda$} & \multirow{3}*{condition} & \eqref{eq:fold_con1} & \eqref{eq:fold_con2} & \eqref{eq:fold_con3}; \eqref{eq:fold_con4} \\[0.25ex]
		\multirow{3}*{\& bounded range} & & $g \in \Cont^{1,1}_{\lambda,\tau_0}$; $\|g^{\prime\prime}\|_\infty \leq \frac{2h}{\alpha^2}$; & & $g \in \Cont_{\lambda,\tau_0}^{1,1}$; $\|g^{\prime\prime}\|_\infty \leq \frac{2h}{\alpha^2}$; \\
		& & $\|g^\prime\|_\infty \leq \frac{2\lambda - h}{\alpha}$ & & $\|g^\prime\|_\infty \leq \frac{(2\lambda - h) \cdot j_\alpha(\sigma)}{\sigma}$ \\[1em]
		& range & $[-\lambda,\lambda]$ & $[-\lambda,\lambda]$ & $[-\lambda-r,\lambda+r]$ \\
		\midrule
		\multirow{2}*{separation of folds} & \multirow{2}*{condition} & \multirow{2}*{--} & \eqref{eq:fold_con2} & \multirow{2}*{--} \\[0.25ex]
		& & & $g \in \Cont^{1,1}_{\lambda,\tau_0}$; $\|g^{\prime\prime}\|_\infty \leq \frac{2h}{\alpha^2}$ \\[1em]
		& \multirow{2}*{amount} & \multirow{2}*{$\frac{\min\{h, 2\lambda-h\}}{\|g^\prime\|_\infty}$} & $\alpha$ & \multirow{2}*{$\sigma$} \\
		& & & for folds in different directions & \\
		\midrule
		\multirow{3}*{return property} & \multirow{3}*{condition} & & \eqref{eq:fold_con2} & \eqref{eq:fold_con3}; \eqref{eq:fold_con4} \\[0.25ex]
		& & \multirow{2}*{$g \in \B^{\lambda-h}_{\rho}$} & \multirow{2}*{$g \in \Cont^{1,1}_{\lambda,\tau_0} \cap \B^{\lambda-h}_{\rho}$; $\|g^{\prime\prime}\|_\infty \leq \frac{2 h}{\alpha^2}$} & $g \in  \Cont_{\lambda,\tau_0}^{1,1} \cap \B^{\lambda-h}_\rho$; $\|g^{\prime\prime}\|_\infty \leq \frac{2 h}{\alpha^2}$; \\
		& & & & $\|g^\prime\|_{\infty} \leq \frac{  (2 \lambda - h) \cdot j_{\alpha} (\sigma)}{\sigma}$ \\[1em]
		& interval & $[\rho+\alpha,\infty)$ & $[\rho+2\alpha,\infty)$ & $[\rho+2\alpha,\infty)$  \\
		\bottomrule
	\end{tabular}
\end{table*} 

As mentioned above, the generalized modulo encoder $\Mod_H$ and the generalized modulo operator $\Mod_\lambda^{h,j_\alpha}$ agree for $\alpha = 0$.
We now state a condition under which this also holds for $\alpha > 0$ and for the delayed generalized modulo operator $\Mod_{\lambda,\sigma}^{h,j_\alpha}$.

\begin{theorem} \label{theo:Mod_comparison}
	Let $\alpha \geq \sigma > 0$, $0 < h < 2\lambda$ and $j_\alpha$ satisfy \eqref{eq:fold_con1} and \eqref{eq:fold_con2}.
	Moreover, let $g \in \Cont^{1,1}_{\lambda,\tau_0}$ with $\|g^\prime\|_\infty < \frac{2 \lambda - h}{\alpha}$ and $\|g^{\prime\prime}\|_\infty < \frac{2h}{\alpha^2}$.
	Then, we have $\Mod_H g = \Mod_\lambda^{h,j_\alpha} g = \Mod_{\lambda,\sigma}^{h,j_\alpha} g$.
\end{theorem}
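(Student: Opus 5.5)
The plan is to show that, under the stated growth bounds, the three sequences of folding points coincide, $\tau_p = \kappa_p = \nu_p$ for all $p$, and that every fold uses the same sign. Once this holds, all three outputs equal $g - (2\lambda-h)\sum_p s_p\, j_\alpha(\cdot - \tau_p)$ and are therefore identical. The argument is an induction on $p$, carried along the common sequence.

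The key input is Theorem~\ref{theo:Mod_H_range}. Its hypotheses hold, even with strict inequalities, so consecutive folding points of $\Mod_H$ satisfy $\tau_{p+1} - \tau_p \geq \alpha$. In fact the proof there gives strict separation $\tau_{p+1}-\tau_p > \alpha$, since $\|g'\|_\infty < \frac{2\lambda-h}{\alpha}$ and $\|g''\|_\infty < \frac{2h}{\alpha^2}$. The same theorem also gives $\Mod_H g(\tau_p) = \lambda s_p$.

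Suppose $\tau_i = \kappa_i = \nu_i$ for $i \leq p$ with matching signs, so that $\zeta_p = \xi_p = \eta_p^{(\alpha)}$. We identify the next point in each scheme.

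For $\Mod_H$, $\tau_{p+1}$ is the first $t > \tau_p$ at which $|\eta_p^{(0)}(t)| \geq \lambda$. The transient of the fold at $\tau_p$ is finished by time $\tau_p+\alpha < \tau_{p+1}$, and all earlier transients are finished too. Hence $\eta_p^{(\alpha)}$ and $\eta_p^{(0)}$ agree on $[\tau_p+\alpha,\infty)$. It therefore suffices to show that $|\eta_p^{(\alpha)}(t)| < \lambda$ on $(\tau_p, \tau_p+\alpha]$, which gives $\kappa_{p+1} = \tau_{p+1}$, with matching sign because the two functions agree there.

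This interior estimate is already contained in the proof of Theorem~\ref{theo:Mod_H_range}, and becomes strict with strict hypotheses. Take $s_p = 1$. The upper bound $\lambda - (2\lambda-h)\frac{t-\tau_p}{\alpha} + (t-\tau_p)\|g'\|_\infty$ is strictly below $\lambda$ for $t > \tau_p$; this uses \eqref{eq:fold_con1}. For the lower bound I would use $g'(\tau_p)\geq 0$ together with the Taylor estimate, as in Theorem~\ref{theo:Mod_j_range_separation}, to get $\eta_p^{(\alpha)}(t) \geq -\lambda + h - \frac{\|g''\|_\infty}{2}(t-\tau_p)^2 > -\lambda$ on $(\tau_p,\tau_p+\alpha]$. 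Near $t=\tau_p$ itself, the right derivative of $\eta^{(\alpha)}_p$ at $\tau_p$ is $g'(\tau_p) - (2\lambda-h)\partial_+ j_\alpha(0)$. By \eqref{eq:fold_con1}, $\partial_+ j_\alpha(0) \geq \frac1\alpha$, so this derivative is negative, and the function leaves $\lambda$ strictly downwards. This also rules out an immediate refold at $\kappa_p$, which the infimum in \eqref{eq:Mod_j_kappa} might otherwise produce.

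The delayed operator is handled the same way. Since $\sigma \leq \alpha < \tau_{p+1}-\tau_p$, the constraint $t > \nu_p + \sigma$ never excludes the true next crossing. On $(\nu_p, \nu_p+\sigma]$ there is no crossing by the estimate above, so $\nu_{p+1} = \kappa_{p+1}$.

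The main obstacle is the base of the induction and the rigorous treatment of the boundary behaviour. This means making sure that no crossing of $\pm\lambda$ occurs in the transient window $(\tau_p,\tau_p+\alpha)$, including at $t \to \tau_p^+$, and that the sign $\sgn(\zeta_p(\kappa_{p+1}))$ equals $s_{p+1}$, in particular in the case of a fold in the opposite direction. For the base case, $\tau_1=\kappa_1=\nu_1$ holds because all schemes start from $g$ with $|g|<\lambda$ before the first crossing, and $\nu_0 + \sigma = \tau_0$. If some point is infinite, all later points are infinite in every scheme, and the identity follows trivially.
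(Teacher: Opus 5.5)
Your proof is correct and follows the paper's skeleton. Both argue by induction along a common folding sequence, use Theorem~\ref{theo:Mod_H_range} for $\tau_{p+1}-\tau_p\geq\alpha$ and $\Mod_H g(\tau_p)=\lambda s_p$, use that the ideal and transient partial sums coincide once all transients have finished, and use $\sigma\leq\alpha$ to identify $\nu_{p+1}$ with $\kappa_{p+1}$.

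You differ in the step that rules out a fold of $\zeta_p$ inside the transient window. The paper splits by sign. For a fold in the opposite direction it uses the separation part of Theorem~\ref{theo:Mod_j_range_separation}, which relies on \eqref{eq:fold_con2}. For a fold in the same direction it uses a separate slope estimate. You instead get the two-sided bound $|\zeta_p(t)|<\lambda$ on all of $(\kappa_p,\kappa_p+\alpha]$ directly from \eqref{eq:fold_con1}, $s_p\,g^\prime(\tau_p)\geq 0$ and the bound on $\|g^{\prime\prime}\|_\infty$, reusing the computations from the proof of Theorem~\ref{theo:Mod_H_range}.

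Your route is more self-contained and needs \eqref{eq:fold_con2} only for the well-definedness of $\Mod_\lambda^{h,j_\alpha}$. It also gives a strict inequality. That is what justifies replacing $\inf\{t>\kappa_p \mid |\zeta_p(t)|\geq\lambda\}$ by the infimum over $t>\kappa_p+\sigma$, even when $\sigma=\alpha$. The non-strict bound $\kappa_{p+1}\geq\kappa_p+\alpha$ alone would not exclude a mere touch of $\pm\lambda$ at $\kappa_p+\alpha$.

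Your right-derivative remark at $\tau_p$ is harmless but redundant. The upper bound from \eqref{eq:fold_con1} already gives $\zeta_p(t)<\lambda$ for every $t\in(\kappa_p,\kappa_p+\alpha]$.
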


\begin{proof}	
	We first show that $\Mod_H g = \Mod_\lambda^{h,j_\alpha} g$.
	By definition we have $\tau_1 = \kappa_1$ and, therefore, $\Mod_H g(t) = \Mod_\lambda^{h,j_\alpha} g(t)$ for all $t \leq \tau_1 = \kappa_1$.
	Now assume that for some $p \in \N$ we have $\tau_n = \kappa_n < \infty$ for all $n \leq p$ and $\Mod_H g(t) = \Mod_\lambda^{h,j_\alpha} g(t)$ for all $t \leq \tau_p = \kappa_p$.
	From Theorem~\ref{theo:Mod_H_range} follows that $\tau_p \geq \tau_{p-1} + \alpha$ and furthermore $\tau_p + \alpha \leq \tau_{p+1}$.
	The separation in Theorem~\ref{theo:Mod_j_range_separation} gives $\kappa_p + \alpha \leq \kappa_{p+1}$ if $\Mod_\lambda^{h,j_\alpha} g(\kappa_p) \cdot \Mod^{h,j_\alpha}_\lambda g(\kappa_{p+1}) < 0$.
	Therefore, we now assume that $\Mod_\lambda^{h,j_\alpha} g(\kappa_p) \cdot \Mod^{h,j_\alpha}_\lambda g(\kappa_{p+1}) > 0$, in which case the range property in Theorem~\ref{theo:Mod_j_range_separation} gives $\Mod_\lambda^{h,j_\alpha} g(\kappa_p) = \Mod^{h,j_\alpha}_\lambda g(\kappa_{p+1})$ and, thus, $ \zeta_p(\kappa_p) =   \zeta_p(\kappa_{p+1})$.
	Consequently, we obtain
	\begin{align*}
		\kappa_{p+1} &= \inf \bigl\{t>\kappa_p \bigm| |\zeta_p(t)| \geq \lambda\bigr\} \\
		&= \inf \bigl\{t>\kappa_p \bigm| |\zeta_p(t) - \zeta_p(\kappa_p)| \geq 0\bigr\},
	\end{align*}
	where
	\begin{align*}
		|\zeta_p(t) - \zeta_p(\kappa_p)| &\leq |g(t) - g(\kappa_p)| - (2\lambda - h) \cdot j_\alpha(t- \kappa_p) \\
		&\leq \|g^\prime\|_\infty (t - \kappa_p) - (2\lambda - h) \\
		&\leq \tfrac{2\lambda - h}{\alpha} \cdot (t - \kappa_p)  - (2\lambda - h)
	\end{align*}
	and, thus,
	\begin{align*}
		\kappa_{p+1} &\geq \inf \bigl\{t>\kappa_p \bigm|  \tfrac{2\lambda - h}{\alpha} \cdot (t - \kappa_p)  - (2\lambda - h) \geq 0\bigr\} \\
		&= \kappa_{p} + \alpha
	\end{align*}
	Consequently, $\kappa_p + \alpha \leq \kappa_{p+1}$ holds in all possible cases and $\zeta_p(\kappa_p) = \Mod_\lambda^{h,j_\alpha} g(\kappa_p) = \Mod_H g(\kappa_p) = s_p \, \lambda$ gives
	\begin{align*}
		\tau_{p+1} &= \inf \bigl\{t > \tau_{p} \bigm| \Mod_\lambda(g(t) - g(\tau_p) + h s_p) = 0\bigr\} \\
		&= \inf \bigl\{t > \kappa_{p} \bigm| \Mod_\lambda(g(t) - g(\kappa_p) - (2\lambda - h) s_p) = 0\bigr\} \\
		&= \inf \bigl\{t > \kappa_{p} \bigm| \Mod_\lambda(g(t) - g(\kappa_p) \\ & \qquad\qquad - (2\lambda - h)  \sgn(\zeta_p(\kappa_p))) = 0\bigr\} \\
		&= \inf \bigl\{t > \kappa_p \bigm| \Mod_\lambda (\zeta_p(t) - \zeta_p(\kappa_p)) = 0\bigr\} \\
		&= \inf \bigl\{t > \kappa_p \bigm| \Mod_\lambda (\zeta_p(t) - \lambda \cdot \sgn(\zeta_p(\kappa_p))) = 0\bigr\} \\
		&= \inf \bigl\{t > \kappa_p \bigm| |\zeta_p(t)| = \lambda\bigr\} = \kappa_{p+1}
	\end{align*}
	so that $\Mod_H g(t) = \Mod_\lambda^{h,j_\alpha} g(t)$ for all $t \leq \tau_{p+1} = \kappa_{p+1}$.
	By induction we can conclude that $\Mod_H g = \Mod_\lambda^{h,j_\alpha} g$.
	 
	We now prove that $\Mod_\lambda^{h,j_\alpha}g = \Mod_{\lambda,\sigma}^{h,j_\alpha} g$.
	By definition we have $\nu_1 = \kappa_1$ and $\Mod_\lambda^{h,j_\alpha}g = \Mod_{\lambda,\sigma}^{h,j_\alpha} g$ for all $t \leq \nu_1 = \kappa_1$.
	Assume that for some $p \in \N$ we have $\nu_n = \kappa_n < \infty$ for all $n \leq p$.
	Hence, $\Mod_\lambda^{h,j_\alpha}g(t) = \Mod_{\lambda,\sigma}^{h,j_\alpha} g(t)$ for all $t \leq \nu_p = \kappa_p $ and $\zeta_{p} = \xi_{p}$.
	On the one hand,
	\begin{align*}
		\nu_{p+1} &= \inf \bigl\{t > \nu_p + \sigma \bigm| |\xi_p(t)|\geq \lambda\bigr\}\\
		&= \inf \bigl\{t > \kappa_p + \sigma \bigm| |\zeta_p(t)| \geq \lambda\bigr\} \\
		&\geq \inf \bigl\{t > \kappa_p \bigm| |\zeta_p(t)|\geq \lambda\bigr\} = \kappa_{p+1}.
	\end{align*}
	On the other hand, since $\tau_p = \kappa_p$ and $\tau_{p+1} = \kappa_{p+1}$, Theorem~\ref{theo:Mod_H_range} gives $\kappa_{p+1} = \tau_{p+1} \geq \tau_p + \alpha =  \kappa_p + \alpha$ and we obtain
	\begin{align*}
		\nu_{p+1} &= \inf \bigl\{t > \kappa_p + \sigma \bigm| |\zeta_p(t)| \geq \lambda\bigr\} \\
		&\leq \inf \bigl\{t > \kappa_p +\alpha \bigm| |\zeta_p(t)|\geq \lambda\bigr\} \\
		&= \inf \bigl\{t > \kappa_p \bigm| |\zeta_p(t)| \geq \lambda\bigr\} = \kappa_{p+1}.
	\end{align*}
	This gives $\Mod_\lambda^{h,j_\alpha}g(t) = \Mod_{\lambda,\sigma}^{h,j_\alpha} g(t)$ for all $t \leq \nu_{p+1} = \kappa_{p+1}$ and, by induction, we conclude that $\Mod_\lambda^{h,j_\alpha}g = \Mod_{\lambda,\sigma}^{h,j_\alpha} g $.
\end{proof}

If $\alpha > 0$ and the assumptions of Theorem \ref{theo:Mod_comparison} are not satisfied, the generalized modulo operators $\Mod_H$, $\Mod_\lambda^{h,j_\alpha}$ and $\Mod_{\lambda,\sigma}^{h,j_\alpha}$ in general result in different output functions.
For comparison, the conditions required to guarantee the properties studied in this paper for the three operators are summarized in Table~\ref{tab:Mod_comparison}.

\begin{figure}[t]
	\centering
	\includegraphics[width=\linewidth]{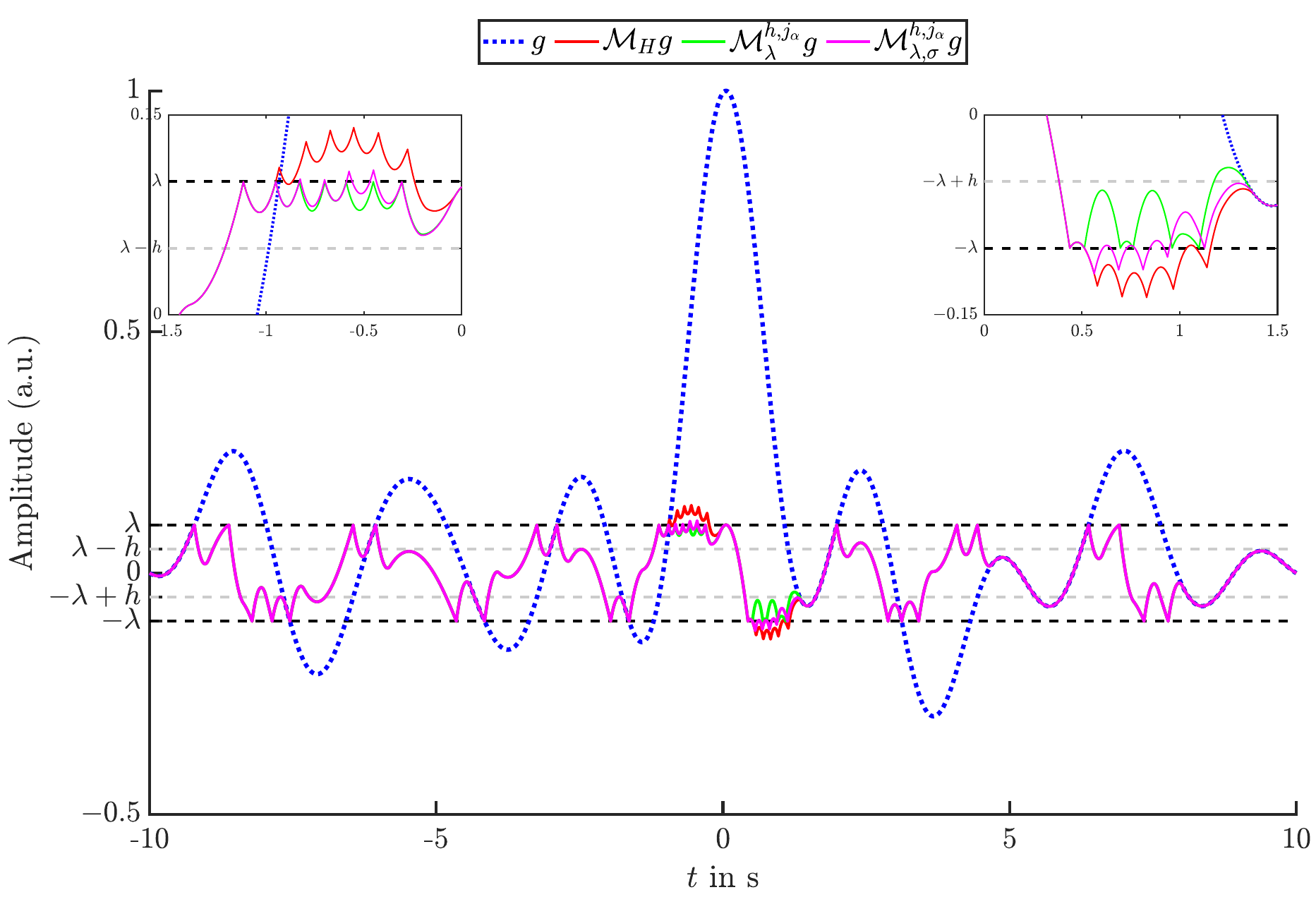}
	\caption{Illustration of $\Mod_H g$, $\Mod_{\lambda}^{h,j_\alpha} g$ and $\Mod_{\lambda,\sigma}^{h,j_\alpha} g$ for $j_\alpha = j_{\alpha}^{(2)}$ and $g \in \PW_\Omega$ with $\Omega = \pi \tfrac{\text{rad}}{\text{s}}$, where $\lambda = 0.1$, $h = 0.05$, $\alpha = 0.25$, $\sigma = 0.125$.}
	\label{fig:Mod_comparison}
\end{figure}

For a function $g$ not meeting the assumptions of Theorem~\ref{theo:Mod_comparison} we now study the behaviour of the proposed modulo operators numerically.
This requires an approximation of the folding points $(\tau_n)_{n \in \N}$, $(\kappa_n)_{n \in \N}$ and $(\nu_n)_{n \in \N}$.
To this end, we choose a sufficiently small sampling rate $d > 0$ and set
\begin{equation} \label{eq:discrete_folding}
	\begin{aligned}
		\tau_{n+1} &= \inf \bigl\{k d > \tau_n \bigm| |\eta_n^{(0)}(k d)| \geq \lambda , ~ k \in \Z \bigr\},\\
		\kappa_{n+1} &= \inf \bigl\{k d >\kappa_n \bigm| |\zeta_n(k d)|\geq \lambda, ~ k \in \Z \bigr\},\\
		\nu_{n+1} &= \inf \bigl\{k d > \nu_n + \sigma \bigm| |\xi_n(k d)| \geq \lambda, ~ k \in \Z \bigr\}.
	\end{aligned}
\end{equation}
Moreover, we consider one of the three folding functions $ j_\alpha^{(1)}, \, j_\alpha^{(2)}, \, j_\alpha^{(3)} \colon \R \to \R $, for $t \in [0,\alpha]$ given by
\begin{equation*}
	j_{\alpha}^{(1)} (t) =  \tfrac{t}{\alpha}, ~
	j_{\alpha}^{(2)} (t) =  1 - (\tfrac{\alpha-t}{\alpha})^2,~
	j_{\alpha}^{(3)} (t) =  3 (\tfrac{t}{\alpha})^2 - 2  (\tfrac{t}{\alpha})^3.
\end{equation*}
Because condition~\eqref{eq:fold_con2} is not satisfied for $j_{\alpha}^{(3)}$, we cannot combine it with the generalized modulo operator $\Mod_\lambda^{h,j_\alpha}$.

\begin{figure}[t]
	\centering
	\includegraphics[width=\linewidth]{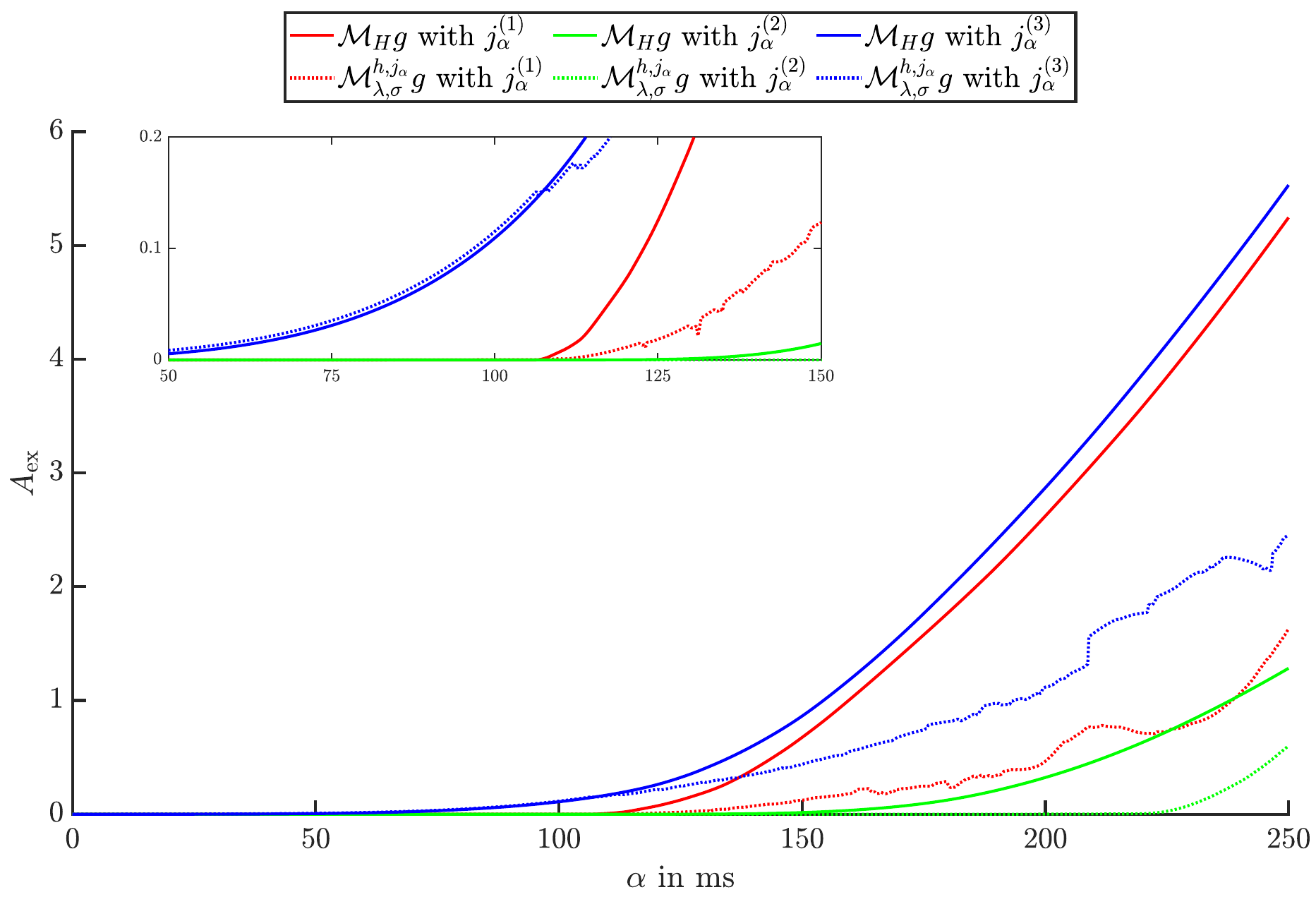}
	\caption{Exceedance $A_{\text{ex}}$ for $\Mod_H g$ (solid) and $\Mod_{\lambda,\sigma}^{h,j_\alpha} g$ (doted) with $\sigma = \frac{\alpha}{2}$ for random function $g \in \PW_\Omega$ with $\Omega = \pi \tfrac{\text{rad}}{\text{s}}$, where $d = 0.0463 \text{ms}$ and $j_{\alpha} = j_{\alpha}^{(1)}$ (red), $j_{\alpha} = j_{\alpha}^{(2)}$ (green) or $j_{\alpha} = j_{\alpha}^{(3)}$ (blue).}
	\label{fig:Mod_exceedance_area}
\end{figure}

We are mainly interested in studying the exceedance of the range $[-\lambda,\lambda]$.
In Theorem~\ref{theo:Mod_j_range_separation} we have shown that $\Mod_\lambda^{h,j_\alpha} g$ always stays within $[-\lambda,\lambda]$.
This can indeed be observed numerically in Figure~\ref{fig:Mod_comparison}.
In contrast to this, $\Mod_H g$ and $\Mod_{\lambda,\sigma}^{h,j_\alpha} g$ leave the range $[-\lambda,\lambda]$.
To quantify the severeness, we calculate the exceedance area of $g_\lambda \in \{\Mod_H g, \Mod_{\lambda,\sigma}^{h,j_\alpha} g\}$ via
\begin{equation*}
	A_{\text{ex}} = \int_{|g_\lambda(t)|> \lambda} (|g_\lambda(t)| - \lambda) \: \d t
\end{equation*}
for all three folding functions.
The results are shown in Figure~\ref{fig:Mod_exceedance_area} for varying values of $\alpha$.
We observe that for all folding functions the exceedance of $\Mod_H g$ is more severe than for $\Mod_{\lambda,\sigma}^{h,j_\alpha} g$ and the difference is more pronounced for larger values of $\alpha$.
Moreover, the exceedance grows with smaller initial slopes of the folding function $j_\alpha$.
Indeed, $j_\alpha^{(3)}$ yields the largest exceedance area followed by $j_\alpha^{(1)}$ and, lastly, $j_\alpha^{(2)}$.

\section{Reconstruction using OMP} \label{sec:reconstruction}

We first state a condition for the sampling rate $\T > 0$ to uniquely determine $g \in \PW_\Omega \cap \Cont^{0,1}_{\lambda,\tau_0}$ with bandwidth $\Omega > 0$ from discrete samples of $g_\lambda \in \{\Mod_H g, \Mod_\lambda^{h,j_\alpha} g, \Mod_{\lambda,\sigma}^{h,j_\alpha} g\}$,
\begin{equation*}
	\{g_\lambda(k\T) \mid k \in \Z\},
\end{equation*}
where we assume that $g_\lambda$ is well-defined and that there exists $\rho > 0$ such that $g_\lambda(t) = g(t)$ for all $|t| \geq \rho$, c.f.~Table~\ref{tab:Mod_comparison} for sufficient conditions in the different cases.
In~\cite[Lemma 3]{Beckmann2025a} we have shown that any $g \in \PW_\Omega$ is determined by $\{\A g(k\T) \mid k \in \Z\}$ if the oversampling condition $\T < \frac{\pi}{\Omega}$ is met, where $\A: \PW_\Omega \to \Lebesgue^2(\R)$ is a general operator with
\begin{equation*}
\A g(t) = g(t)
\quad \forall \, t \in \R \setminus M
\end{equation*}
for some bounded set $M \subset \R$.
Hence, applying \cite[Lemma 3]{Beckmann2025a} to our setting directly gives the following identifiability result.

\begin{proposition}
	For $g \in \PW_\Omega \cap \Cont^{0,1}_{\lambda,\tau_0}$ let $g_\lambda$ be well-defined and $\rho > 0$ with $g_\lambda(t) = g(t)$ for all $|t| \geq \rho$.
	Then, $g$ is uniquely determined by the samples $\{g_\lambda(k\T) \mid k \in \Z\}$ if $\T < \frac{\pi}{\Omega}$.\qed
\end{proposition}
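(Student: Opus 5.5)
The plan is to reduce the statement to \cite[Lemma 3]{Beckmann2025a} by choosing the operator and the exceptional set appropriately, and to check that lemma's hypotheses. Define $\A g = g_\lambda$, where $g_\lambda$ is one of $\Mod_H g$, $\Mod_\lambda^{h,j_\alpha} g$ or $\Mod_{\lambda,\sigma}^{h,j_\alpha} g$, and set $M = (-\rho,\rho)$. This set is bounded, and by assumption $\A g(t) = g(t)$ for all $t \in \R \setminus M$. The lemma requires $\A g \in \Lebesgue^2(\R)$. I would check this by splitting $\R$ into $M$ and its complement:
\begin{itemize}
\item On $\R\setminus M$, $g_\lambda$ coincides with $g \in \PW_\Omega \subset \Lebesgue^2(\R)$.
\item On $M$, $g_\lambda$ is bounded. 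For $\Mod_H$, Theorem~\ref{theo:Mod_H_separation} shows the defining sum is finite on bounded sets, and $g$ is continuous. For the other two operators, Proposition~\ref{prop:Mod_j_convergence} and the well-definedness argument for the delayed operator show that on $M$ only finitely many folds contribute, since $g_\lambda = \zeta_p$ or $\xi_{n+1}$ there. So $g_\lambda|_M$ is bounded on a bounded set, hence square integrable.
\end{itemize}

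Uniqueness then follows from the lemma in the following form. Suppose $g_1, g_2 \in \PW_\Omega \cap \Cont^{0,1}_{\lambda,\tau_0}$ produce the same samples, with return radii $\rho_1$ and $\rho_2$. Set $M = (-\max\{\rho_1,\rho_2\}, \max\{\rho_1,\rho_2\})$. Then $g_1(k\T) = g_2(k\T)$ for all $|k\T| \ge \max\{\rho_1,\rho_2\}$, so the bandlimited function $f = g_1 - g_2$ is known to vanish on all but finitely many sample points.

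The core of \cite[Lemma 3]{Beckmann2025a}, which I would recall rather than reprove, handles these finitely many missing samples. Under strict oversampling $\T < \pi/\Omega$, the space $\PW_\Omega$ embeds into $\PW_{\pi/\T}$ with spectrum strictly inside. A function in $\PW_{\pi/\T}$ whose samples vanish outside finitely many indices is a finite combination of shifted $\sinc$ functions, $f(t) = \sum_{|k\T|<\rho} c_k \sinc\bigl(\tfrac{\pi}{\T}(t-k\T)\bigr)$. Its Fourier transform is then a trigonometric polynomial on $[-\pi/\T, \pi/\T]$. That transform must vanish on the nonempty open set $\Omega < |\omega| < \pi/\T$, so the polynomial vanishes identically. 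Hence all $c_k = 0$ and $f \equiv 0$.

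The main point to get right is matching the lemma's hypotheses: the boundedness of $M$ and the $\Lebesgue^2$ property of $g_\lambda$, which is where the finiteness of folds on bounded intervals is used. Once those are in place, the statement is a direct application of the lemma.
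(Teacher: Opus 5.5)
Your proposal is correct and follows the paper's route: the paper obtains the proposition by directly applying \cite[Lemma 3]{Beckmann2025a} with $\A g = g_\lambda$ and a bounded exceptional set $M$. Your extra unpacking of that lemma is sound and only makes explicit what the paper leaves to the citation. Comparing two candidates via $\max\{\rho_1,\rho_2\}$ makes $g_1 - g_2$ a finite $\sinc$ combination, and its trigonometric-polynomial spectrum must then vanish on the guard band $\Omega < |\omega| < \pi/\T$.
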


We now adapt the recovery approach from \cite{Beckmann2025a} to our setting.
To this end, we assume that we are given discrete samples
\begin{equation*}
	\{g_\lambda(k\T) \mid k \in \Z\}
\end{equation*}
with $\T<\frac{\pi}{\Omega}$ and $g_\lambda(k\T) = g(k\T)$ for all $|k| \geq K \in \N$.
Then, there exist $L_\lambda \in \N$ folding points $\kappa_1,\ldots,\kappa_{L_\lambda} \in [-K,K]\T$ and signs $\sigma_1,\ldots,\sigma_{L_\lambda} \in \{\pm1\}$ such that $g_\lambda = g - s_\lambda$ with $s_\lambda = (2\lambda-h) \sum_{l=1}^{L_\lambda} \sigma_l \, j_\alpha(\cdot - \kappa_l)$.
As in \cite{Beckmann2025a}, we can represent the samples of the folding function $j_\alpha$ by the sum of multiple instantaneous folds on $\T\Z$.
Hence, defining $\underline{g} = g(\cdot+\T)-g$, we obtain $\underline{g}_\lambda = \underline{g} - \underline{s}_\lambda$ with
\begin{equation*}
	\underline{s}_\lambda(n\T) = \sum\nolimits_{\ell \in \Ell_\lambda} c_\ell \, \delta(n\T - t_\ell)
\end{equation*}
for suitable $\Ell_\lambda \subseteq \{-K,\dots,K\}$, $c_\ell \in \R$ and $t_\ell \in (\T\Z) \cap [-K\T,K\T]$, where $|\Ell_\lambda| \leq L_\lambda \, (\lceil\frac{\alpha}{\T}\rceil + 1)$.
We now consider the discrete time Fourier transform (DTFT)
\begin{equation*}
	\widehat{\underline{g}}(\omega) = \sum\nolimits_{n\in\Z} \underline{g}(n\T) \, \e^{-\i n \T \omega}
	\quad \text{for } \omega \in \R.
\end{equation*}
As $g \in \PW_\Omega$ implies $\underline{g} \in \PW_\Omega$, Poisson's summation formula
\begin{equation*}
	\sum\nolimits_{n \in \Z} \Fourier \underline{g}\bigl(\omega + \tfrac{2\pi}{\T}n\bigr) = \T \sum\nolimits_{n \in \Z} \underline{g}(n\T) \, \e^{-\i n \T \omega}
\end{equation*}
implies that $\widehat{\underline{g}}$ is $\frac{2\pi}{\T}$-periodic and, for $\omega \in [0,\frac{2\pi}{\T})$,
\begin{equation*}
	\widehat{\underline{g}}(\omega) = \begin{cases}
		\tfrac{1}{\T} \Fourier \underline{g}(\omega) & \text{for } 0 \leq \omega \leq \Omega, \\
		0 & \text{for } \Omega < \omega < \tfrac{2\pi}{\T}-\Omega, \\
		\tfrac{1}{\T} \Fourier \underline{g}(\omega-\tfrac{2\pi}{\T}) & \text{for } \tfrac{2\pi}{\T}-\Omega \leq \omega < \tfrac{2\pi}{\T}.
	\end{cases}
\end{equation*}
In particular, for all $\Omega < \omega < \tfrac{2\pi}{\T}-\Omega$ we obtain
\begin{equation*}
	\widehat{\underline{g}}_\lambda(\omega) = - \widehat{\underline{s}}_\lambda(\omega) = -\sum\nolimits_{\ell \in \Ell_\lambda} c_\ell \, \e^{-\i \omega t_\ell}.
\end{equation*}
and, with $\omega_m = \frac{2\pi m}{N\T} \in [0,\frac{2\pi}{\T})$ for $m=0,\ldots,N-1$ and $N \geq K$,
\begin{equation*}
	\widehat{\underline{g}}_\lambda(\omega_m) = -\sum\nolimits_{\ell \in \Ell_\lambda} c_\ell \, \e^{-\i \underline{\omega}_0 m \ell}
	\quad \forall \, m \in \I_{N_\Omega,N},
\end{equation*}
where $\underline{\omega}_0 = \tfrac{2\pi}{N}$ and $\I_{N_\Omega,N} = \{N_\Omega+1,\ldots,N-N_\Omega-1\}$ with $N_\Omega = \lfloor\frac{\Omega N \T}{2\pi}\rfloor$.
For sake of brevity, we set $g[n] = g((n-K)\T)$ and
\begin{equation*}
	\widehat{\underline{g}}_\lambda[m] = \e^{-\i \underline{\omega}_0 m K} \, \widehat{\underline{g}}_\lambda(\omega_m) = \sum\nolimits_{n\in\Z} \underline{g}_\lambda[n] \, \e^{-\i \underline{\omega}_0 m n}.
\end{equation*}
Hence, with $\bfs = (\widehat{\underline{g}}_\lambda[m])_{m \in \I_{N_\Omega,N}} \in \C^M$, $M = N - 2 N_\Omega - 1$, we can find $\bfc = (c_k)_{k = -K}^{K} \in \C^{2K+1}$ with $c_k = 0$ for $k \not\in \Ell_\lambda$ by solving the minimization problem 
\begin{equation}\label{eq:MinProb}
	\text{minimize } \|\bfc\|_0
	\quad \text{such that}
	\quad \bfV \bfc = \bfs,
	\end{equation}
where $\bfV \in \C^{M \times (2K+1)}$ is a Vandermonde matrix with entries $\bfV_{m-N_\Omega,n+1} = \e^{-\i \underline{\omega}_0 mn}$ for $m \in \I_{N_\Omega,N}$, $n = 0,\ldots,2K$.
The solution vector $\bfc \in \C^{2K+1}$ of~\eqref{eq:MinProb} encodes the sought parameters $\{c_\ell\}_{\ell \in \Ell_\lambda}$ as its non-zero vector components and the set $\{t_\ell\}_{\ell \in \Ell_\lambda}$ is determined by the corresponding indices.

\begin{algorithm}[t]
	\caption{(OMP)}
	\label{alg:omp}
	\begin{algorithmic}[1]
		\Require Signal $\bfs \in \C^M$ and dictionary matrix $\bfV \in \C^{M \times J}$, error tolerance $\varepsilon \geq 0$
		\medskip
		\State $\bfc^{(0)} = 0 \in \C^J$, $\S^{(0)} = \emptyset$, $i = 1$
		\While{$\lVert\bfV^\ast(\bfs - \bfV \bfc^{(i-1)})\rVert_\infty > \varepsilon $}
		\smallskip
			\State $j^{(i)} = \argmax_{1 \leq j \leq J} \lvert [\bfV^\ast(\bfs - \bfV \bfc^{(i-1)})]_j \rvert$
			\State $\S^{(i)} = \S^{(i-1)} \cup \{j^{(i)}\}$
			\State $\bfc^{(i)} = \argmin_{\bfc} \left\{\lVert\bfs - \bfV \bfc\rVert_2 \mid \supp(\bfc) \subseteq 		\S^{(i)}\right\}$
			\State $i = i+1$
		\smallskip
		\EndWhile
		\medskip
		\Ensure $\bfc^{(i_{\text{end}})} \in \C^J$
	\end{algorithmic}
\end{algorithm}

As explained in \cite{Beckmann2022a, Beckmann2024} in the context of the modulo Radon transform~\cite{Beckmann2022}, the optimization problem~\eqref{eq:MinProb} can be solved by applying the orthogonal matching pursuit (OMP) algorithm, which was first proposed in~\cite{Mallat1993}.
Here we use a variant as described in Algorithm~\ref{alg:omp}.
Given the estimates $\{c_\ell,t_\ell\}_{\ell \in \Ell_\lambda}$, we can form $\underline{s}_\lambda(k\T) = \sum\nolimits_{\ell \in \Ell_\lambda} c_\ell \, \delta(k\T - t_\ell)$ for $k \in \Z$ and compute the samples $s_\lambda(k\T)$ by applying anti-differences, i.e., $s_\lambda(k\T) = \sum_{j < k} \underline{s}_\lambda(j\T)$.
Adding $s_\lambda(k\T)$ to $g_\lambda(k\T)$ finally gives $g(k\T)$ and we can recover $g$ via Shannon interpolation, i.e.,
\begin{equation*}
	g = \sum\nolimits_{k \in \Z} g(k\T) \sinc\Bigl(\frac{\pi}{\T}(\cdot - k\T)\Bigr).
\end{equation*}

In~\cite[Theorem 1]{Beckmann2024} it is shown that Algorithm~\ref{alg:omp} with exact arithmetic and $\varepsilon = 0$ recovers the parameters $\{c_\ell,t_\ell \}_{\ell \in \Ell_\lambda}$ if $\max_{\ell \in \Ell_\lambda + K} |\ell| < N-2N_\Omega-1$.
As $\max_{\ell \in \Ell_\lambda + K} |\ell| \leq 2K-1$ because of $g_\lambda(k\T) = g(k\T)$ for all $|k| \geq K$ by choice of $K \in \N$, we set $N = K + K^\prime$ and require that $2K-1 < N-2N_\Omega-1$.
Defining the oversampling factor $\OF = \frac{\pi}{\Omega\T} > 1$, this can be guaranteed if
\begin{equation*}
K^\prime > \frac{1+\OF^{-1}}{1-\OF^{-1}} K.
\end{equation*}
In total, we obtain the following reconstruction result for OMP.

\begin{theorem} \label{theo:OMP_recovery}
	For $g \in \PW_\Omega \cap \Cont^{0,1}_{\lambda,\tau_0}$ let $g_\lambda$ be well-defined and $\rho > 0$ with $g_\lambda(t) = g(t)$ for all $|t| \geq \rho$.
	Assume we are given discrete samples $\{g_\lambda(k\T) \mid k \in \Z\}$ with $\T < \frac{\pi}{\Omega}$.
	Then, $g$ is exactly recovered by OMP with $\varepsilon = 0$ and $N = K + K^\prime$, where
	\begin{equation*}
		K \geq \rho \T^{-1}, \quad
		K^\prime > \frac{1+\OF^{-1}}{1-\OF^{-1}} K.
		\tag*{\qed}
	\end{equation*}
\end{theorem}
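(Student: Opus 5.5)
The proof assembles results already stated in the paper. It has three steps: reduce to a finite sparse model, invoke the exact-recovery guarantee for OMP, and verify the dimension condition.

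\emph{Step 1: finite sparse model.} Since $g_\lambda(t)=g(t)$ for all $|t|\ge\rho$ and $K\ge\rho\T^{-1}$, we have $g_\lambda(k\T)=g(k\T)$ for all $|k|\ge K$. Hence every folding point whose transient influences the samples lies in $[-K\T,K\T]$. Then $g_\lambda = g - s_\lambda$ with $s_\lambda=(2\lambda-h)\sum_{l}\sigma_l\, j_\alpha(\cdot-\kappa_l)$ holds for each of the three operators. This follows from Definitions~\ref{def:Mod_H}, \ref{def:Mod_j} and \ref{def:Mod_reset}, together with the finiteness of the folds guaranteed by Theorem~\ref{theo:Mod_H_separation}, Proposition~\ref{prop:Mod_j_convergence} and the minimal spacing $\sigma$, respectively. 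Applying the forward difference turns $\underline{s}_\lambda$ on $\T\Z$ into a finite sum of weighted Diracs supported in $\{-K,\dots,K\}\T$. The support sits inside the window of indices $0,\dots,2K-1$ after the shift by $K$. The edge index is excluded because $\underline{g}_\lambda(k\T)=\underline{g}(k\T)$ once both $k$ and $k+1$ are outside the folding region.

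\emph{Step 2: spectral reduction.} Because $\underline{g}\in\PW_\Omega$ and $\T<\pi/\Omega$, Poisson summation shows that the DTFT $\widehat{\underline{g}}$ vanishes on $(\Omega,2\pi/\T-\Omega)$. Therefore the DFT samples at $\omega_m$, $m\in\I_{N_\Omega,N}$, satisfy $\bfV\bfc=\bfs$ exactly, where $\bfc$ is the sparse coefficient vector of $\underline{s}_\lambda$. One point needs care: the DTFT of $\underline{g}_\lambda$ is an infinite sum. It is however computable from the data, since $\underline{g}_\lambda$ coincides with $\underline{g}$ outside a finite window and the discrepancy enters only through finitely many Diracs. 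In fact, only $\widehat{\underline{s}}_\lambda$ is needed on the stopband, and it is a finite exponential sum.

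\emph{Step 3: OMP guarantee.} Apply \cite[Theorem 1]{Beckmann2024}: with exact arithmetic and $\varepsilon=0$, OMP recovers the support and the coefficients provided $\max_{\ell\in\Ell_\lambda+K}|\ell|<N-2N_\Omega-1$. From Step 1 the left-hand side is at most $2K-1$. It therefore suffices to have $2K<N-2N_\Omega$. Using $N_\Omega\le \Omega N\T/(2\pi)=N/(2\,\OF)$ and $N=K+K'$, this reduces to $2K< (K+K')(1-\OF^{-1})$, which is exactly $K'>\frac{1+\OF^{-1}}{1-\OF^{-1}}K$.

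Having $\bfc$, we rebuild $\underline{s}_\lambda$ and take anti-differences to get $s_\lambda(k\T)$; the anti-differences start from zero, since $s_\lambda$ vanishes for $k\le -K$. Adding these to $g_\lambda(k\T)$ gives $g(k\T)$, and the Shannon series with $\T<\pi/\Omega$ reproduces $g$.

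The main obstacle is Step 1's claim that the sparse support lies strictly inside the index range required by the cited OMP theorem, including the boundary bookkeeping for $K\ge\rho/\T$ versus the transient. I would check it directly from the identity $g_\lambda(k\T)=g(k\T)$ for $|k|\ge K$. The other two steps are bookkeeping.
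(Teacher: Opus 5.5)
Your proposal is correct and follows the paper's own argument. The grid identity $g_\lambda(k\T)=g(k\T)$ for $|k|\ge K$ makes $\underline{s}_\lambda$ a finite Dirac sum whose support, after the shift by $K$, lies in $\{0,\dots,2K-1\}$, and Poisson summation leaves only $-\widehat{\underline{s}}_\lambda$ on the stopband. \cite[Theorem 1]{Beckmann2024} then applies once $2K<N-2N_\Omega$, and your bound $N_\Omega\le N/(2\,\OF)$ turns this into the stated condition on $K'$. As you suspected, that grid identity alone settles Step 1, so you can drop the appeal to where the continuous folding points lie and to the finiteness results for the three operators.
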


Note that the above statement does not guarantee that OMP only selects correct folding times.
However, for falsely selected positions the corresponding coefficients will be zero.
To guarantee stable recovery of the folding times, additional conditions are needed as the restricted isometry property or mutual coherence conditions.
This will depend on the separation of folds and is beyond the scopes of this paper.

\begin{algorithm}[t]
	\caption{(SAOMP)}
	\label{alg:saomp}
	\begin{algorithmic}[1]
		\Require Signal $\bfs \in \C^M$ and dictionary matrix $\bfV \in \C^{M \times J}$, error tolerance $\varepsilon \geq 0$, initial threshold $\nu \in [0,1]$, pruning threshold $\mu \in [0,1]$, maximal iteration number $i_{\text{max}} \in \N $
		\medskip
		\State $\bfc^{(0)} = 0 \in \C^J$, $\S^{(0)} = \emptyset$, $\delta = \nu$, $i = 1$
		\While{$i \leq i_{\text{max}}$ \textbf{and} $\lVert\bfV^\ast(\bfs - \bfV \bfc^{(i-1)})\rVert_\infty > \varepsilon $}
		\smallskip
			\State $\bfr^{(i)} = \bfV^\ast(\bfs - \bfV \bfc^{(i-1)})$
			\State $\S^{(i)} = \S^{(i-1)} \cup \{j \in \{1,\ldots,J\} \mid \lvert \bfr^{(i)}_j \rvert \geq \delta \, \lVert\bfr^{(i)}\rVert_\infty\}$
			\State $\bfc^{(i)} = \argmin_{\bfc} \{\lVert\bfs - \bfV \bfc\rVert_2 \mid \supp(\bfc) \subseteq \S^{(i)}\}$
			\State $\S^{(i)} = \{j \in \{1,\ldots,J\} \mid \lvert \bfc_j^{(i)} \rvert \geq \mu \, \lVert\bfc^{(i)}\rVert_\infty\}$
			\State \textbf{for} $j \not\in \S^{(i)}$: $\bfc_j^{(i)} = 0$
			\State $\delta = \delta + \frac{1-\nu}{i_{\text{max}}}$, $i = i+1$
		\smallskip
		\EndWhile
		\medskip
		\Ensure $\bfc^{(i_{\text{end}})} \in \C^J$
	\end{algorithmic}
\end{algorithm}

\subsection{Iterated forward differences}

For $\alpha > 0$, the above reconstruction approach splits one non-instantaneous fold into up to $\lceil\frac{\alpha}{\T}\rceil + 1$ instantaneous folds.
This makes the reconstruction via OMP more challenging, especially for large values of $\alpha$ compared to $\T$.
To deal with this case, we now modify our reconstruction approach for certain smooth folding functions.
More precisely, we assume that $j_\alpha \in \Cont^{p-2}(\R)$ for some $p \geq 2$ and $j_\alpha|_{[0,\alpha]} \in \P_{p-1}$, i.e., $j_\alpha$ is a polynomial of degree $p-1$ on $[0,\alpha]$.
Note that such a folding function cannot satisfy assumption~\eqref{eq:fold_con2} so that in the following we have to deal with $\Mod_H$ or $\Mod_{\lambda,\sigma}^{h,j_\alpha}$.
Moreover, we consider the forward difference operator $\Delta_\T \colon \Cont(\R) \to \Cont(\R)$ given by $\Delta_\T f = f(\cdot+\T) - f$ and the divided difference operator $[x_0,\ldots,x_p] \colon \Cont(\R) \to \R$ with $x_j = (j+m)\T$ for $j = 0,\ldots,p$ and some $m \in \Z$, defined via the recursion
\begin{equation*}
	[x_j,\ldots,x_k](f) = \frac{[x_{j+1},\ldots,x_k](f) - [x_j,\ldots,x_{k-1}](f)}{x_k - x_j}
\end{equation*}
for $0 \leq j < k \leq p$, where $[x_j](f) = f(x_j)$ for $j = 0,\ldots,p$.
By induction one can show that
\begin{equation*}
	\forall \, 0 \leq k \leq p ~ \forall \, 0 \leq j \leq p-k \colon [x_j,\ldots,x_{j+k}](f) = \frac{\Delta_\T^k f(x_j)}{k! \, \T^k}
\end{equation*}
and, if $f \in \Cont^p([x_0,x_\ell])$, the mean-value theorem gives
\begin{equation*}
	[x_0,\ldots,x_p](f) = \frac{f^{(p)}(\tau)}{p!}
	\quad \mbox{for some } \tau \in [x_0,x_p].
\end{equation*}
Hence, if $[x_0,x_p] \subseteq [0,\alpha]$ we observe that $\Delta_\T^p j_\alpha(m\T) = 0$.
The same holds if $[x_0,x_p] \subset (-\infty,0]$ or $[x_0,x_p] \subset [\alpha,\infty)$ so that $\Delta_\T^p j_\alpha(m\T) \neq 0$ only if $0 \in (x_0,x_p)$ or $\alpha \in (x_0,x_p)$.
Analogously, for $j_\alpha^{(\kappa)} = j_\alpha(\cdot-\kappa)$ located at $\kappa \in \R$ we have $\Delta_\T^p j_\alpha^{(\kappa)}(m\T) \neq 0$ only if $\kappa \in (x_0,x_p)$ or $\kappa+\alpha \in (x_0,x_p)$, which is equivalent to $m \in (\frac{\kappa}{\T}-p,\frac{\kappa}{\T}) \cup (\frac{\kappa+\alpha}{\T}-p,\frac{\kappa+\alpha}{\T})$.
Consequently, if $j_\alpha \in \Cont^{p-2}(\R)$ is a polynomial of degree $p-1$ on $[0,\alpha]$, applying the iterated forward difference operator $\Delta_\T^p$ splits one non-instantaneous fold into at most $2p$ instantaneous folds, which is advantageous over the previous approach if
\begin{equation*}
\Bigl\lceil\frac{\alpha}{\T}\Bigr\rceil > 2p-1.
\end{equation*}

To explain the full adapted recovery approach, we again assume that we are given discrete samples
\begin{equation*}
	\{g_\lambda(k\T) \mid k \in \Z\}
\end{equation*}
with $\T<\frac{\pi}{\Omega}$ and $g_\lambda(k\T) = g(k\T)$ for all $|k| \geq K \in \N$.
Then, there exist $L_\lambda \in \N$ folding points $\kappa_1,\ldots,\kappa_{L_\lambda} \in [-K,K]\T$ and signs $\sigma_1,\ldots,\sigma_{L_\lambda} \in \{\pm1\}$ such that $g_\lambda = g - s_\lambda$ with $s_\lambda = (2\lambda-h) \sum_{l=1}^{L_\lambda} \sigma_l \, j_\alpha(\cdot - \kappa_l)$.
Now, defining $\underline{g} = \Delta_\T^p g$, we obtain $\underline{g}_\lambda = \underline{g} - \underline{s}_\lambda$ with
\begin{equation*}
	\underline{s}_\lambda(n\T) = \sum\nolimits_{\ell \in \Ell_\lambda} c_\ell \, \delta(n\T - t_\ell)
\end{equation*}
for suitable $\Ell_\lambda \subseteq \{-K,\dots,K\}$ and $t_\ell \in (\T\Z) \cap [-K\T,K\T]$, where $|\Ell_\lambda| \leq 2p L_\lambda$.
As before, DTFT in combination with the bandlimitedness of $g \in \PW_\Omega$ gives
\begin{equation*}
	\widehat{\underline{g}}_\lambda[m] = -\sum\nolimits_{\ell \in \Ell_\lambda} c_\ell \, \e^{-\i \underline{\omega}_0 m (\ell+K)}
	\quad \forall \, m \in \I_{N_\Omega,N},
\end{equation*}
where the left-hand side is computable from given modulo data $g[n] = g((n-K)\T)$ via
\begin{equation*}
	\widehat{\underline{g}}_\lambda[m] = \sum\nolimits_{n\in\Z} \underline{g}_\lambda[n] \, \e^{-\i \underline{\omega}_0 m n}.
\end{equation*}
Consequently, we end up with the same minimization problem 
\begin{equation*}
	\text{minimize } \|\bfc\|_0
	\quad \text{such that}
	\quad \bfV \bfc = \bfs,
	\end{equation*}
which can be solved by applying OMP.
Given the estimates $\{c_\ell,t_\ell\}_{\ell \in \Ell_\lambda}$, we can form $\underline{s}_\lambda(k\T) = \sum\nolimits_{\ell \in \Ell_\lambda} c_\ell \, \delta(k\T - t_\ell)$ for $k \in \Z$ and compute the samples $s_\lambda(k\T)$ by iteratively applying anti-differences $p$-times.
Adding $s_\lambda(k\T)$ to $g_\lambda(k\T)$ finally gives $g(k\T)$ and we can recover $g$ via Shannon interpolation, i.e.,
\begin{equation*}
	g = \sum\nolimits_{k \in \Z} g(k\T) \sinc\Bigl(\frac{\pi}{\T}(\cdot - k\T)\Bigr).
\end{equation*}

\subsection{Reconstruction adapted to folding function}

The ability to model the folding function allows us to adapt the dictionary for the Vandermonde matrix $\bfV$ so that a fold can be represented by \emph{one} column of the matrix.
This is especially useful if the transient $\alpha$ is much larger than the sampling rate~$\T$.
To this end, we assume that the folding points satisfy $\kappa_1,\ldots,\kappa_{L_\lambda} \in (\T\Z) \cap [-K\T,K\T]$.
As this is usually not satisfied, we encounter a small error by approximating folds not exactly located at sampling points with ones located at the nearest sampling points.
Now, for $n \in \Z$ we have
\begin{equation}\label{eq:sLambdaDic}
	s_\lambda(n\T) =  \sum\nolimits_{\ell \in \Ell_\lambda} c_\ell \, j_\alpha(n\T - t_\ell),
\end{equation}
with $c_\ell \in (2\lambda-h)\Z$ and applying forward differences gives
\begin{equation*}
	\underline{s}_\lambda(n\T) = \sum\nolimits_{\ell \in \Ell_\lambda} c_\ell \,( j_\alpha((n+1)\T - t_\ell) - j_\alpha(n\T - t_\ell)),
\end{equation*}
where $t_\ell \in (\T\Z) \cap [-K\T,K\T]$ and $\Ell_\lambda \subseteq \{-K,\dots,K\}$ with $|\Ell_\lambda| = L_\lambda$.
Thereon, DTFT in combination with the bandlimitedness of $g \in \PW_\Omega$ yields
\begin{equation*}
	\widehat{\underline{g}}_\lambda[m] = -\sum_{\ell \in \Ell_\lambda} c_\ell \,\sum_{k = 0}^{\lceil\frac{\alpha}{\T}\rceil - 1} \underline{j}_\alpha (k\T) \, \e^{-\i \underline{\omega}_0 m(k+\ell+K)}
\end{equation*}
for all $m \in \I_{N_\Omega,N}$.
Hence, with $\bfs = (\widehat{\underline{g}}_\lambda[m])_{m \in \I_{N_\Omega,N}} \in \C^M$ we obtain the minimization problem 
\begin{equation*}
	\text{minimize } \|\bfc\|_0
	\quad \text{such that}
	\quad \bfV \bfc = \bfs,
\end{equation*}
however with $\bfV_{m-N_\Omega,n+1} = \sum_{k = 0}^{\lceil\frac{\alpha}{\T}\rceil-1} \underline{j}_\alpha (k\T) \, \e^{-\i \underline{\omega}_0 m(k+n)}$.
Having obtained the solution vector $\bfc \in \C^{2K+1}$, we can reconstruct $s_\lambda$ using \eqref{eq:sLambdaDic} and, afterwards, $g$ via $g = g_\lambda + s_\lambda$.

\section{Numerical Experiments} \label{sec:numerics}

In practice, only finitely many samples are available and we assume that we are given $\{g_\lambda(k\T) \mid k = -K,\ldots,K^\prime\}$.
Then, we set $N = K + K^\prime + 1 - p$ and approximate the left-hand side $\widehat{\underline{g}}_\lambda[m]$ for {$m = 0,\ldots,N-1$} via
\begin{equation*}
	\widehat{\underline{g}}_\lambda[m] \approx \sum\nolimits_{n=0}^{N-1} \underline{g}_\lambda[n] \, \e^{-\i \underline{\omega}_0 m n},
\end{equation*}
which can be efficiently computed with the fast Fourier transform (FFT).
Moreover, as in~\cite{Beckmann2025a}, we speed up the recovery by applying the stagewise arithmetic orthogonal matching pursuit (SAOMP) algorithm proposed in~\cite{Zhang2018}, see Algorithm~\ref{alg:saomp}, which has additional parameters $\nu$ to find multiple folding points in one iteration and $\mu$ to remove incorrectly selected ones.
Note that $\nu = 1$ and $\mu = 0$ yields the original OMP algorithm.
As a general rule, choosing a smaller value for $\nu$ allows for more folding points to be picked in one iteration and thereby can yield a faster recovery.
However, choosing $\nu$ too small might result in selecting false entries of the dictionary and can hinder accurate recovery.
To account for this, the parameter $\mu$ is incorporated for pruning the dictionary entry selection, whose value should be smaller than the ration of the largest and smallest correct non-zero entry in $\bfc$, which depends on the number of folds occurring at approximately the same time and on the flatness of the folding function.
The parameter $\varepsilon$ is independent of $\nu$ and $\mu$ and accounts for noise.
Its value should be smaller than the smallest non-zero entry in $\bfc$.

We now present some numerical simulations for a proof of concept and to illustrate the versatility of our reconstruction approach.
To this end, we randomly select a function $g \in \PW_\Omega$ with $\Omega = \pi$ and approximate $g_\lambda$ numerically.
To guarantee a good discretization of the generalized modulo operators, we choose $d > 0$ sufficiently small to compute the folding points as in~\eqref{eq:discrete_folding}.
For reconstruction, however, we only use the modulo samples $\{g_\lambda(k\T) \mid k = -K,\ldots,K^\prime\}$ with $\T \gg d$.
Moreover, for simplicity, we mainly focus on the delayed generalized modulo operator $\Mod_{\lambda,\sigma}^{h,j_\alpha}$ with $\sigma = \frac{\alpha}{2}$, which allows us to compare all three folding functions $j_\alpha^{(1)}, \, j_\alpha^{(2)}, \, j_\alpha^{(3)}$ from above.

\begin{figure}[t]
	\centering
	\includegraphics[width=\linewidth]{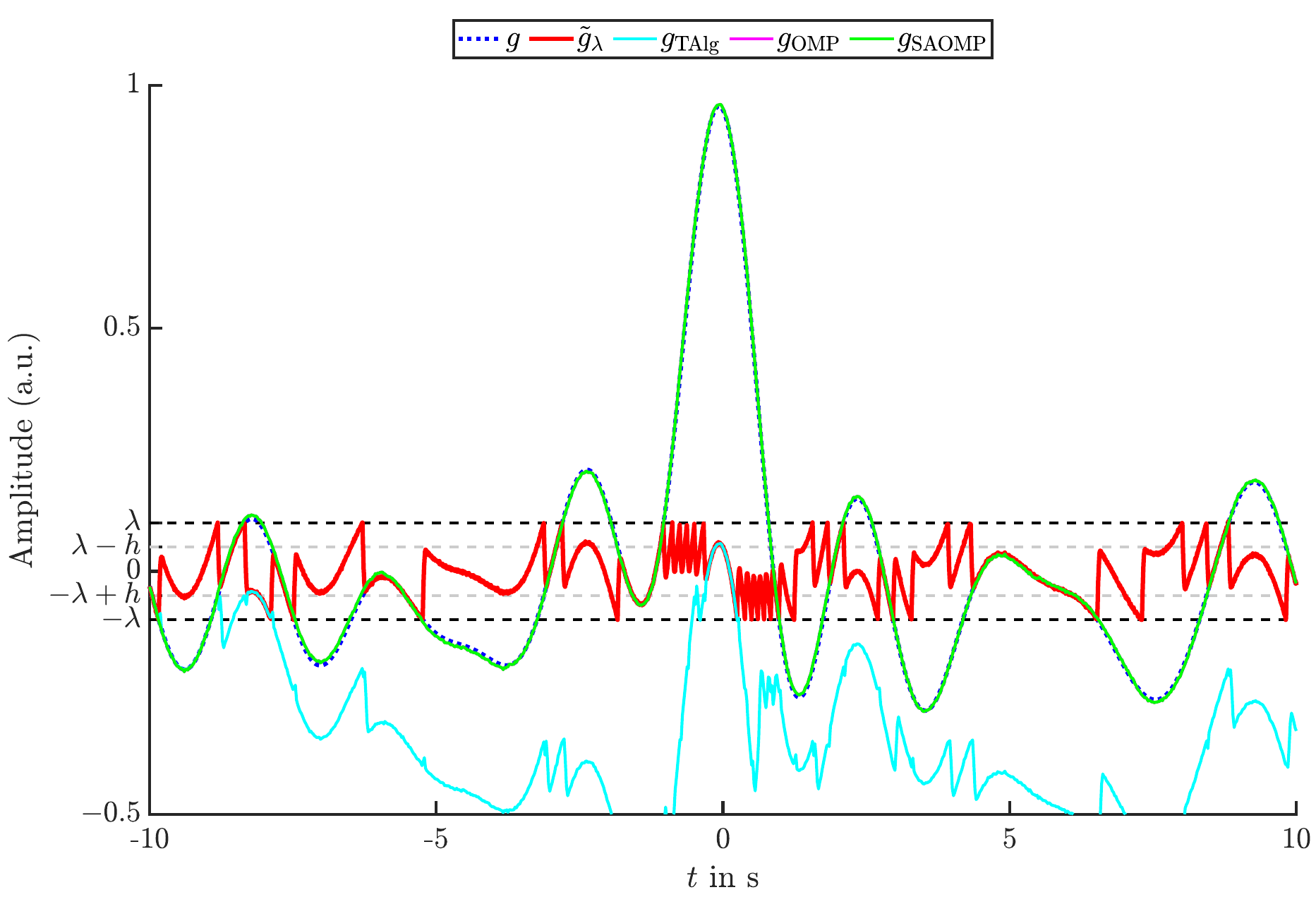}
	\caption{Illustration of TAlg, OMP and SAOMP reconstruction for random $g \in \PW_\Omega$ with $\Omega = \pi \tfrac{\text{rad}}{\text{s}}$ based on noisy modulo samples $\tilde{g}_{\lambda}(n\T)$ with an SNR of $30 \text{dB}$ and $\lambda = 0.1$, $h = 0.05$, $\alpha =  50 \text{ms}$, $\T = 20.8 \text{ms}$, $j_{\alpha} =  j_{\alpha}^{(2)}$.}
	\label{fig:Mod_recon_noisy_j2}
\end{figure}

\begin{figure}[t]
	\centering
	\includegraphics[width=\linewidth]{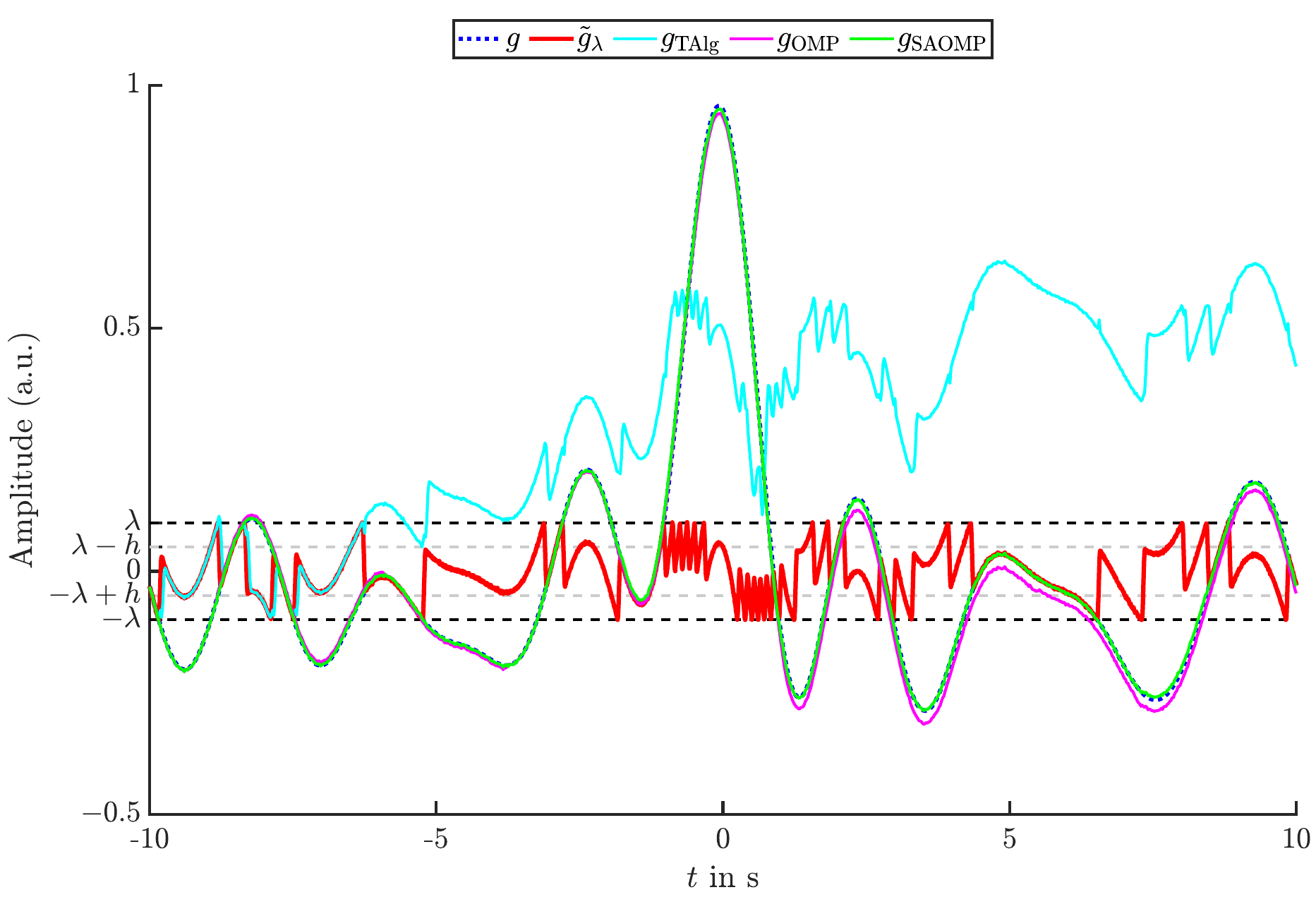}
	\caption{Illustration of TAlg, OMP and SAOMP reconstruction for random $g \in \PW_\Omega$ with $\Omega = \pi \tfrac{\text{rad}}{\text{s}}$ based on noisy modulo samples $ \tilde{g}_{\lambda}(n\T)$ with an SNR of $30 \text{dB}$ and $\lambda = 0.1$, $h = 0.05$, $\alpha =  50 \text{ms}$, $\T = 20.8 \text{ms}$, $j_{\alpha} =  j_{\alpha}^{(3)}$.}
	\label{fig:Mod_recon_noisy_j3}
\end{figure}

In our first experiments we compare our OMP and SAOMP approach with the thresholding algorithm (TAlg) from~\cite{Florescu2022}, which was designed for the generalized modulo encoder $\Mod_H$ with $j_{\alpha}^{(1)}$.
In~\cite{Beckmann2025a} we have already seen that TAlg is in general not able to recover $g$ from $g_\lambda = \Mod_{\lambda}^{h,j_\alpha} g$ with $j_\alpha = j_\alpha^{(1)}$, while SAOMP gives satisfactory results.
The case $g_\lambda = \Mod_{\lambda,\sigma}^{h,j_\alpha} g$ with $j_\alpha = j_\alpha^{(2)}$ is shown in Fig.~\ref{fig:Mod_recon_noisy_j2}, where we chose $\lambda = 0.1$, $h = 0.05$, $\alpha = 0.05$ and $\T = 0.0208$.
Moreover, we added Gaussian noise yielding noisy samples $\tilde{g}_{\lambda}(n\T)$ with a signal-to-noise ratio (SNR) of $30 \text{dB}$.
We observe that OMP and SAOMP yield the same near perfect reconstruction with a mean squared error (MSE) of $2.1 \cdot 10^{-5}$.
However, OMP needs $122$ iterations and $2.6$ seconds, whereas SAOMP only requires $86$ iterations and $1.3$ seconds.
In contrast to this, TAlg is not able to reconstruct $g$ in this scenario.
The results for $j_\alpha = j_\alpha^{(3)}$ are shown in Fig.~\ref{fig:Mod_recon_noisy_j3}, where we use the same parameters and again add Gaussian noise yielding an SNR of $30 \text{dB}$.
As before, TAlg fails, while OMP and SAOMP give satisfactory results.
Now, however, SAOMP performs better than OMP with an MSE of $1.6 \cdot 10^{-5}$, while OMP yields an MSE of $2.8 \cdot 10^{-4}$.

\begin{figure}[t]
	\centering
	\includegraphics[width=\linewidth]{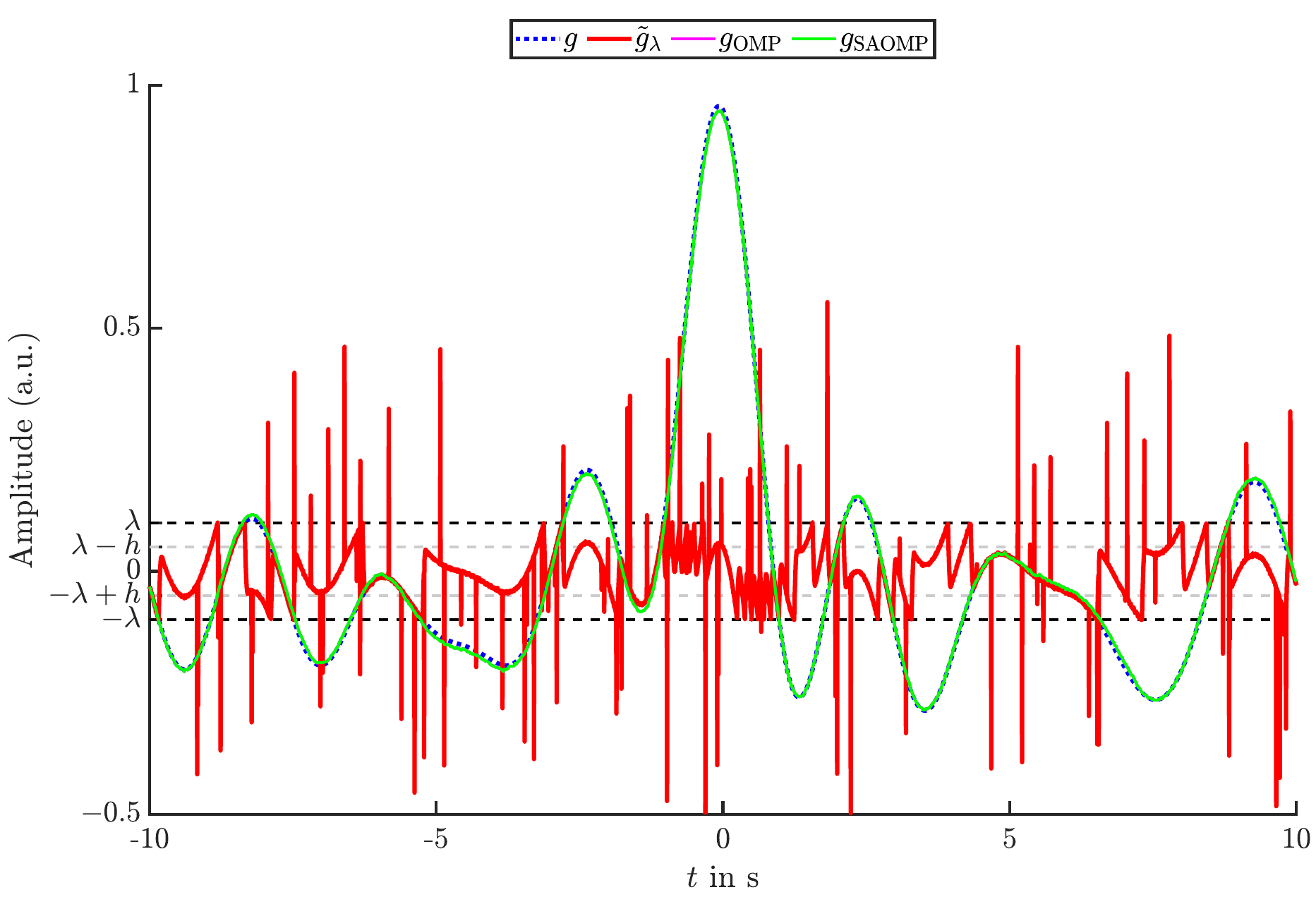}
	\caption{Illustration of (SA)OMP reconstruction for random $g \in \PW_\Omega$ with $\Omega = \pi \tfrac{\text{rad}}{\text{s}}$ based on noisy modulo samples $\tilde{g}_{\lambda}(n\T)$ with shot noise at up to $100$ positions and $\lambda = 0.1$, $h = 0.05$, $\alpha =  50 \text{ms}$, $\T = 20.8 \text{ms}$, $j_{\alpha} =  j_{\alpha}^{(2)}$.}
	\label{fig:Mod_recon_shot_noise_j2}
\end{figure}

In Fig.~\ref{fig:Mod_recon_shot_noise_j2} we repeat the experiment from Fig.~\ref{fig:Mod_recon_noisy_j2} with additional shot noise~\cite{Bhandari2022} at up to $100$ positions and with a maximal magnitude of $0.5$ yielding an SNR of $0.8 \text{dB}$.
Despite this very low SNR, both OMP and SAOMP are able to reconstruct the signal with a MSE of $4.5\cdot10^{-5}$.
This is because each position of shot noise is interpreted by the algorithms as two subsequent folds and is therefore automatically removed.

\begin{figure}[t]
	\centering
	\includegraphics[width=\linewidth]{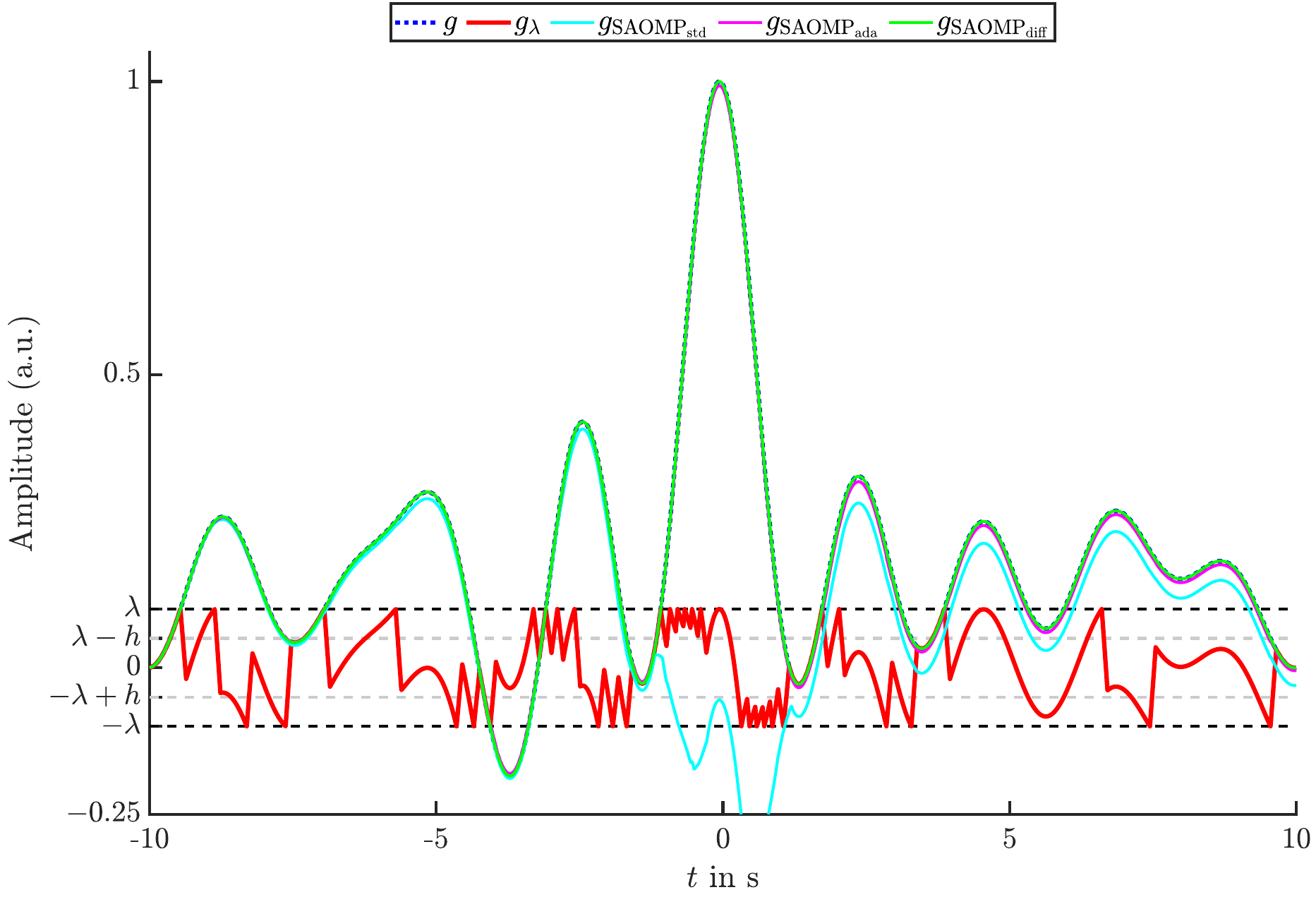}
	\caption{Illustration of SAOMP reconstruction (standard with and without iterated forward differences vs.\ adapted dictionary) for random $g \in \PW_\Omega$ with $\Omega = \pi \tfrac{\text{rad}}{\text{s}}$ based on samples $ g_{\lambda}(n\T)$ and $\lambda = 0.1$, $h = 0.05$, $\alpha =  100 \text{ms}$, $\T = 5.2 \text{ms}$ and $j_{\alpha} = j_{\alpha}^{(1)}$, where the dictionary is adapted to $j_\alpha^{(1)}$.}
	\label{fig:Mod_recon_ada_j1}
\end{figure}

\begin{figure}[t]
	\centering
	\includegraphics[width=\linewidth]{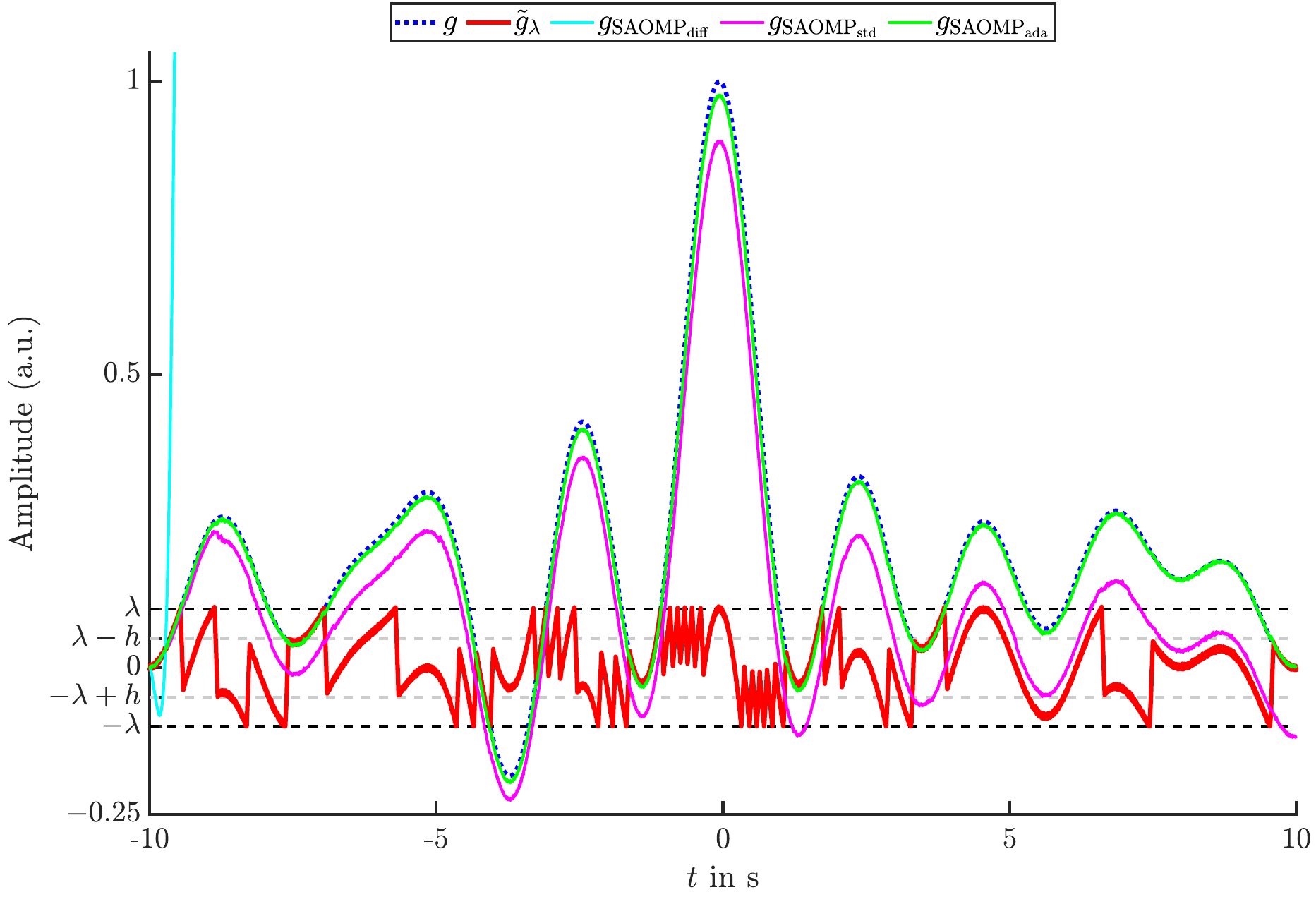}
	\caption{Illustration of SAOMP reconstruction (standard with and without iterated forward differences vs.\ adapted dictionary) for random $g \in \PW_\Omega$, $\Omega = \pi \tfrac{\text{rad}}{\text{s}}$, based on noisy samples $ \tilde{g}_{\lambda}(n\T)$ with an SNR of $30 \text{dB}$ and $\lambda = 0.1$, $h = 0.05$, $\alpha =  50 \text{ms}$, $\T = 5.2 \text{ms}$, $j_{\alpha} =  j_{\alpha}^{(1)}$.}
	\label{fig:Mod_recon_ada_noisy_j1}
\end{figure}

\begin{figure}[t]
	\centering
	\includegraphics[width=\linewidth,]{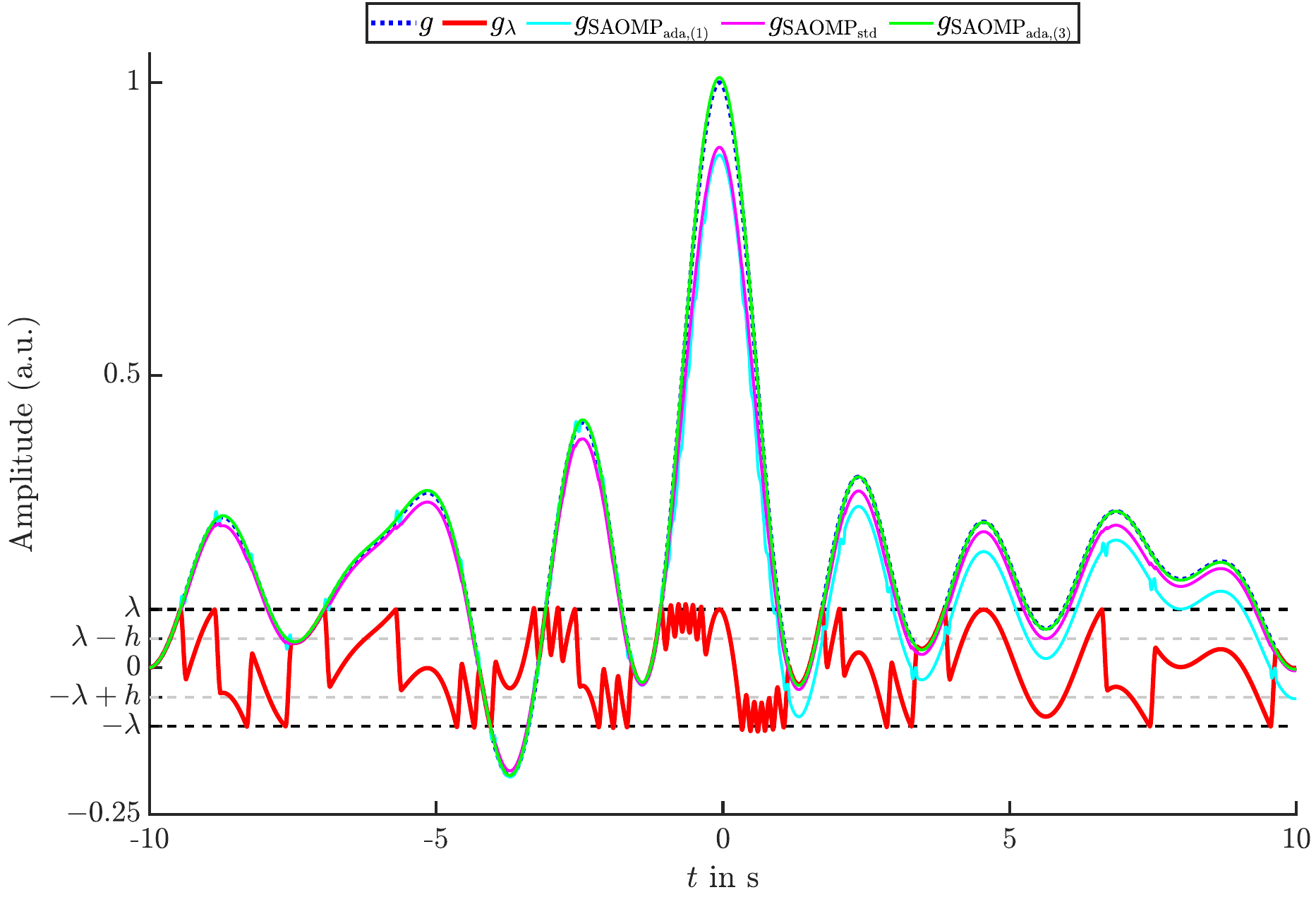}
	\caption{Illustration of SAOMP reconstruction (standard with and without iterated forward differences vs.\ adapted dictionary) for random $g \in \PW_\Omega$ with $\Omega = \pi \tfrac{\text{rad}}{\text{s}}$ based on samples $ g_{\lambda}(n\T)$ and $\lambda = 0.1$, $h = 0.05$, $\alpha =  100 \text{ms}$, $\T = 5.2 \text{ms}$ and $j_{\alpha} = j_{\alpha}^{(3)}$, where the dictionary is falsely adapted to $j_\alpha^{(1)}$ (magenta) and correctly to $j_\alpha^{(3)}$ (cyan).}
	\label{fig:Mod_recon_ada_j3}
\end{figure}

\begin{figure}[t]
	\centering
	\includegraphics[width=\linewidth,]{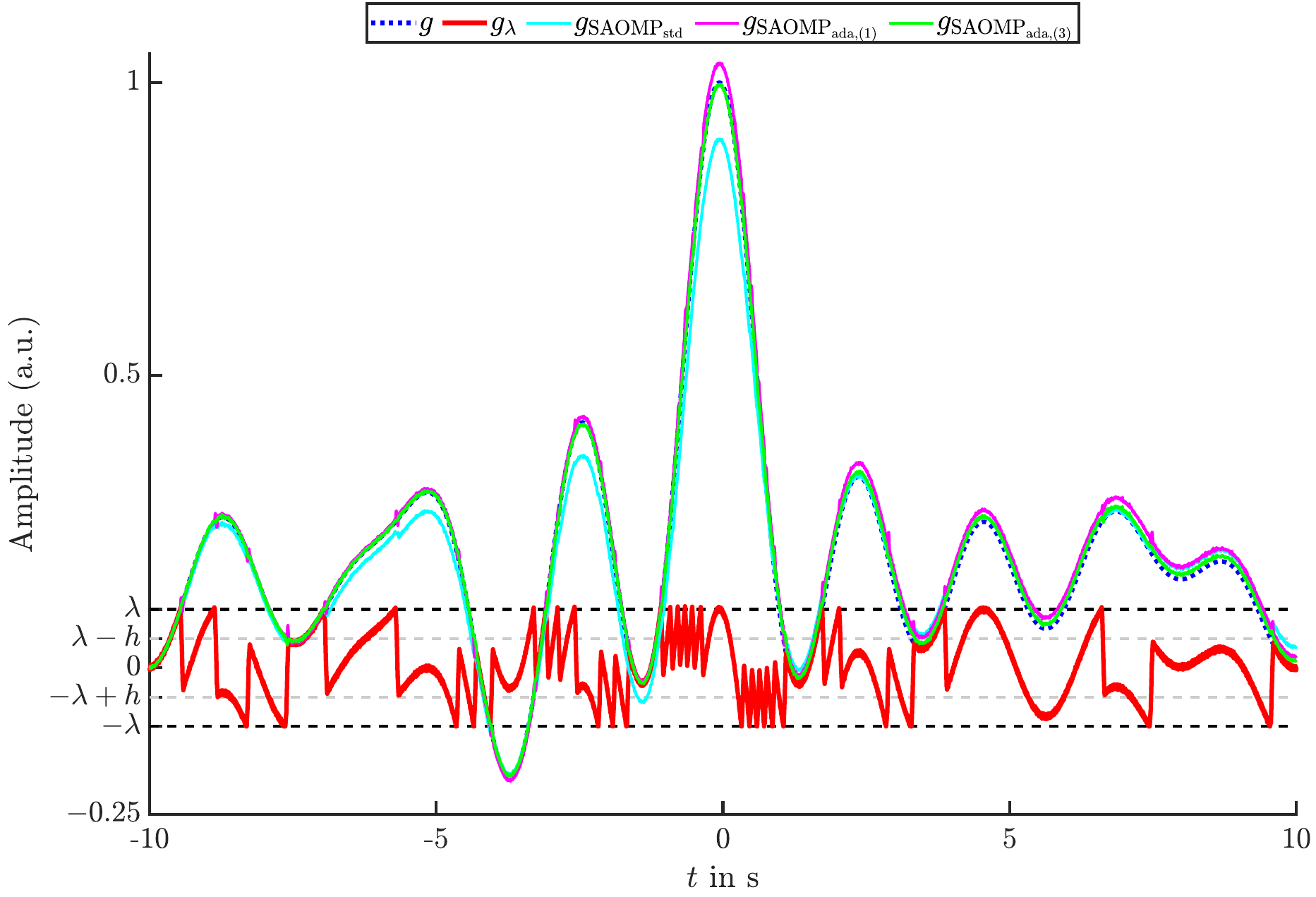}
	\caption{Illustration of SAOMP reconstruction (standard with and without iterated forward differences vs.\ adapted dictionary) for random $g \in \PW_\Omega$, $\Omega = \pi \tfrac{\text{rad}}{\text{s}}$, based on noisy samples $ \tilde{g}_{\lambda}(n\T)$ with an SNR of $30 \text{dB}$ and $\lambda = 0.1$, $h = 0.05$, $\alpha =  50 \text{ms}$, $\T = 5.2 \text{ms}$, $j_{\alpha} =  j_{\alpha}^{(3)}$.}
	\label{fig:Mod_recon_ada_noisy_j3}
\end{figure}

When increasing the quotient $\frac{\alpha}{\T}$, each non-instantaneous fold is split into a growing number of instantaneous ones and the reconstruction via standard SAOMP becomes more challenging.
In this case, our modified approaches with iterated forward differences and adapted dictionaries yield better results.
To illustrate this, in Fig.~\ref{fig:Mod_recon_ada_j1} we recover a random $g \in \PW_\pi$ from $g_\lambda(n\T) = \Mod_{\lambda,\sigma}^{h,j_\alpha} g(n\T)$ with $j_\alpha = j_\alpha^{(1)}$, $\lambda = 0.1$, $h = 0.05$, $\alpha = 0.1$ and $\T = 0.0052$.
While standard SAOMP fails, SAOMP with an adapted dictionary yields an MSE of $2.5 \cdot 10^{-5}$.
The reconstruction quality for SAOMP with iterated forward differences, where $p = 2$, is even better yielding an MSE of $6.1 \cdot 10^{-10}$.
This, however, is only true in the noise-free setting, as shown in FIg.~\ref{fig:Mod_recon_ada_noisy_j1}, where we chose $\alpha = 0.05$ and added Gaussian noise yielding noisy samples $\tilde{g}_{\lambda}(n\T)$ with an SNR of $30 \text{dB}$.
In this case, SAOMP with iterated forward differences fails, while SAOMP with adapted dictionary yields the best result with an MSE of $9.0 \cdot 10^{-5}$.

When considering $g_\lambda = \Mod_{\lambda,\sigma}^{h,j_\alpha} g$ with $j_\alpha = j_\alpha^{(3)}$, the correct number of forward differences is $p = 4$.
While the iterated forward differences do not create additional errors in the noise-free setting, the necessary iterated anti-differences amplify small inaccuracies in the identification of folding points and, consequently, render this approach inappropriate.
Hence, in Fig.~\ref{fig:Mod_recon_ada_j3} we compare standard SAOMP with adapted dictionaries, where in addition to $j_\alpha^{(3)}$ we falsely adapt to $j_\alpha^{(1)}$.
We observe that SAOMP with correctly adapted dictionary yields a satisfying reconstruction with an MSE of $7.3 \cdot 10^{-6}$, while the wrong dictionary leads to an MSE of $2.5 \cdot 10^{-3}$, worse than standard SAOMP with an MSE of $8.9 \cdot 10^{-4}$.
This illustrates the necessity of correctly adapting the dictionary and, thus, the need for an accurate model.
The same remains true when adding noise, as illustrated in Fig.~\ref{fig:Mod_recon_ada_noisy_j3}.

\begin{figure}[t]
	\centering
	\includegraphics[width=\linewidth,]{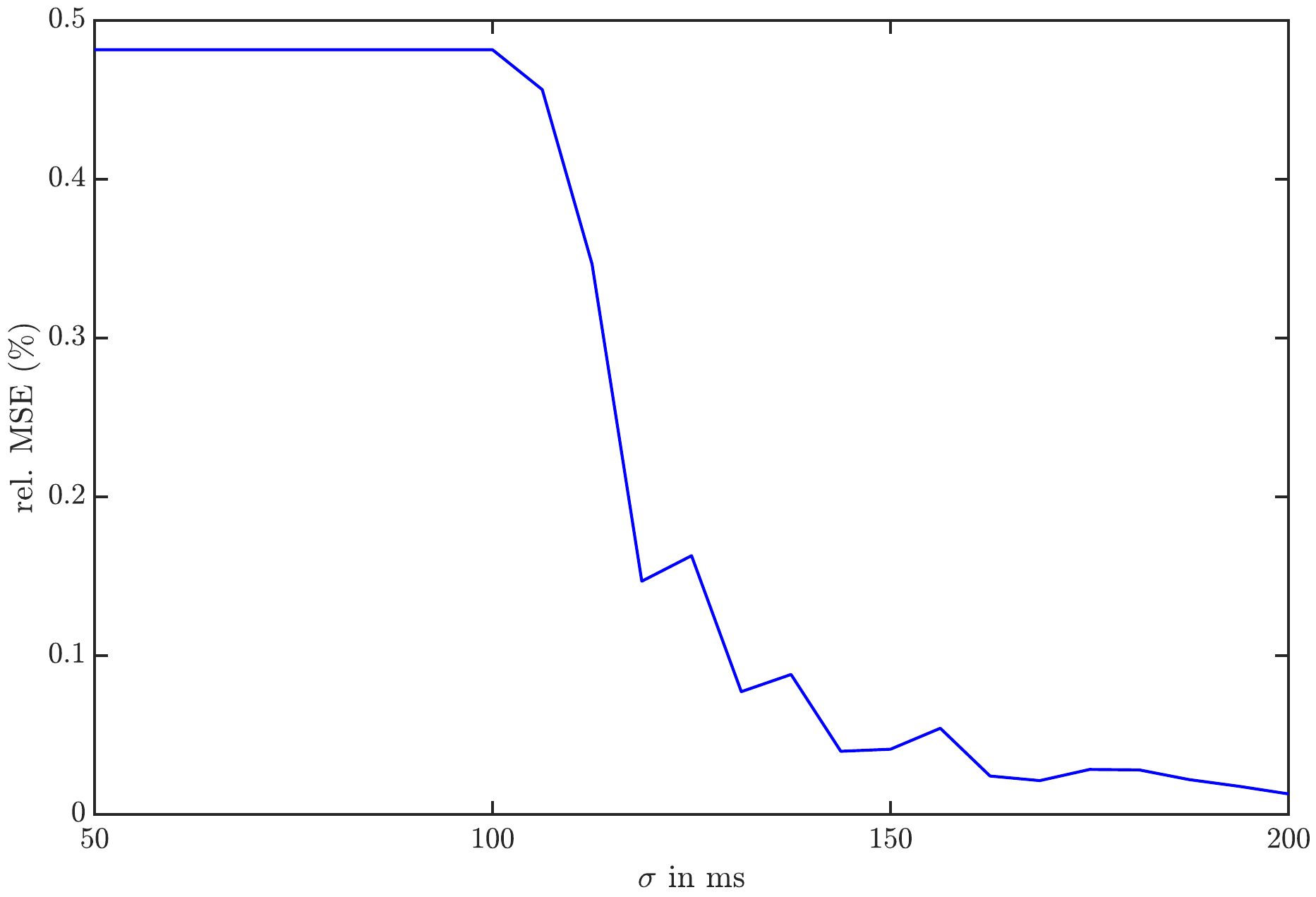}
	\caption{Median relative MSE of SAOMP reconstructions with adapted dictionary for $25$ random $g \in \PW_\Omega$ with $\Omega = \pi \tfrac{\text{rad}}{\text{s}}$ based on samples $g_\lambda(n\T)$ with $g_\lambda = \Mod_{\lambda,\sigma}^{h,j_\alpha} g$, $\lambda = 0.1$, $h = 0.05$, $\alpha =  100 \text{ms}$, $\T = 5.2 \text{ms}$, $j_{\alpha} =  j_{\alpha}^{(1)}$ and varying $\sigma \in [\frac{\alpha}{2},2\alpha]$.
	For $g_\lambda = \Mod_H g$ the median relative MSE is $0.48 \%$ and for $g_\lambda = \Mod_\lambda^{h,j_\alpha} g$ the median relative MSE is $0.37 \%$}
	\label{fig:Mod_recon_sigma}
\end{figure}

In our last numerical experiment, we study the influence of the reset time $\sigma$ on the reconstruction quality of SAOMP with adapted dictionary.
As a larger value for $\sigma$ yields a larger separation of folds, we expect that reconstruction becomes easier with increasing $\sigma$.
This can indeed be observed numerically, as illustrated in Fig.~\ref{fig:Mod_recon_sigma}, where we plot the median relative MSE as a function of $\sigma$.
More precisely, we computed the relative MSE for the SAOMP reconstruction with adapted dictionary from $g_\lambda(n\T) = \Mod_{\lambda,\sigma}^{h,j_\alpha} g(n\T)$ with $j_{\alpha} =  j_{\alpha}^{(1)}$, $\lambda = 0.1$, $h = 0.05$, $\alpha =  0.1$, $\T = 0.0052$ and varying $\sigma \in [\frac{\alpha}{2},2\alpha]$ for $25$ random functions $g \in \PW_\pi$.
We observe that for $\frac{\alpha}{2} \leq \sigma \leq \alpha$ the median relative MSE is constant and of the same level as for $g_\lambda = \Mod_h g$.
Increasing $\sigma$, however, leads to a significant drop yielding better results than for $g_\lambda = \Mod_{\lambda}^{h,j_\alpha} g$.

\section{Conclusion} \label{sec:conclusion}

The unlimited sensing framework overcomes the intrinsic trade-off between dynamic range and digital resolution in conventional Shannon-Nyquist sampling by exploiting that for any bandlimited signal its fractional part encodes the integer part.
To account for hardware imperfections, modulo hysteresis operators were proposed leading to a perturbed decomposition of the signal.
In this paper we introduced three generalized modulo hysteresis operators, which differ from the standard linear folding transition by incorporating a general transition function, which for two of the operators can take any form between zero and $\alpha$.
The operators differ in the way they determine the folding times.
While the first one determines the folding points in relation to an ideal folding transition, the other two make the position of the folds depend on the transient nonideality either with or without an additional reset time.
We rigorously analysed the mathematical properties of the three operators and studied the question under which conditions the perturbed fractional part still encodes the whole signal.
In particular, we proved conditions for identifiability of bandlimited signals from discrete generalized modulo samples.
Moreover, we described a mathematically backed reconstruction algorithm based on orthogonal matching pursuit, which can be adapted to the specific modulo operator and folding function.
This allowed us to also reconstruct signals in the challenging regime where the transition parameter is large compared to the sampling rate.
Our numerical results indicate that the reconstruction quality depends on the utilized model and recovery works best for transient depending folding points with reset time larger than the transient parameter.

Our work raises interesting questions and possible extensions for future research.
Key directions include
\begin{itemize}
\item validating our theoretical results through hardware experiments to study the applicability of our models in practice,
\item theoretically and numerically studying the stability and noise performance of our reconstruction approach,
\item designing recovery algorithms tailored to the specific model that allow for non-bandlimited input functions.
\end{itemize}


\begin{thebibliography}{10}
\providecommand{\url}[1]{#1}
\csname url@samestyle\endcsname
\providecommand{\newblock}{\relax}
\providecommand{\bibinfo}[2]{#2}
\providecommand{\BIBentrySTDinterwordspacing}{\spaceskip=0pt\relax}
\providecommand{\BIBentryALTinterwordstretchfactor}{4}
\providecommand{\BIBentryALTinterwordspacing}{\spaceskip=\fontdimen2\font plus
\BIBentryALTinterwordstretchfactor\fontdimen3\font minus
  \fontdimen4\font\relax}
\providecommand{\BIBforeignlanguage}[2]{{%
\expandafter\ifx\csname l@#1\endcsname\relax
\typeout{** WARNING: IEEEtran.bst: No hyphenation pattern has been}%
\typeout{** loaded for the language `#1'. Using the pattern for}%
\typeout{** the default language instead.}%
\else
\language=\csname l@#1\endcsname
\fi
#2}}
\providecommand{\BIBdecl}{\relax}
\BIBdecl

\bibitem{Shannon1948}
C.~E. Shannon, ``A mathematical theory of communication,'' \emph{The Bell
  System Technical Journal}, vol.~27, no.~3, pp. 379--423, 1948.

\bibitem{Walden1999}
R.~H. Walden, ``Analog-to-digital converter survey and analysis,'' \emph{IEEE
  Journal on Selected Areas in Communications}, vol.~17, no.~4, pp. 539--550,
  1999.

\bibitem{Bhandari2017}
A.~Bhandari, F.~Krahmer, and R.~Raskar, ``On unlimited sampling,'' in
  \emph{IEEE International Conference on Sampling Theory and Applications
  (SampTA)}, 2017, pp. 31--35.

\bibitem{Bhandari2020}
------, ``On unlimited sampling and reconstruction,'' \emph{IEEE Transactions
  on Signal Processing}, vol.~69, pp. 3827--3839, 2020.

\bibitem{Bhandari2021}
A.~Bhandari, F.~Krahmer, and T.~Poskitt, ``Unlimited sampling from theory to
  practice: {F}ourier-{P}rony recovery and prototype {ADC},'' \emph{IEEE
  Transactions on Signal Processing}, vol.~70, pp. 1131--1141, 2021.

\bibitem{Zhu2025a}
Y.~Zhu, R.~Guo, and A.~Bhandari, ``Ironing the modulo folds: Unlimited sensing
  with 100x bandwidth expansion,'' in \emph{IEEE International Conference on
  Sampling Theory and Applications (SampTA)}, 2025, pp. 1--5.

\bibitem{Mulleti2023}
S.~Mulleti, E.~Reznitskiy, S.~Savariego, M.~Namer, N.~Glazer, and Y.~C. Eldar,
  ``A hardware prototype of wideband high-dynamic range analog-to-digital
  converter,'' \emph{IET Circuits, Devices \& Systems}, vol.~17, no.~4, pp.
  181--192, 2023.

\bibitem{Zhu2024}
Y.~Zhu and A.~Bhandari, ``Unleashing dynamic range and resolution in unlimited
  sensing framework via novel hardware,'' in \emph{IEEE SENSORS}, 2024, pp.
  1--4.

\bibitem{Zhu2025}
J.~Zhu, J.~Ma, Z.~Liu, F.~Qu, Z.~Zhu, and Q.~Zhang, ``A modulo sampling
  hardware prototype and reconstruction algorithms evaluation,'' \emph{IEEE
  Transactions on Instrumentation and Measurement}, vol.~74, p. 2005511, 2025.

\bibitem{Guo2025}
R.~Guo, Y.~Zhu, and A.~Bhandari, ``Sub-nyquist {USF} spectral estimation: $k$
  frequencies with $6k+4$ modulo samples,'' \emph{IEEE Transactions on Signal
  Processing}, vol.~72, pp. 5065--5076, 2025.

\bibitem{Li2026}
Z.~Li, W.~Yan, L.~Gan, G.~Li, and H.~Liu, ``An {FPGA}-calibrated modulo {ADC}
  for high-dynamic-range signal acquisition,'' \emph{IEEE Open Journal of
  Circuits and Systems}, vol.~7, pp. 296--307, 2026.

\bibitem{Rudresh2018}
S.~Rudresh, A.~Adiga, B.~A. Shenoy, and C.~S. Seelamantula, ``Wavelet-based
  reconstruction for unlimited sampling,'' in \emph{IEEE International
  Conference on Acoustics, Speech and Signal Processing (ICASSP)}, 2018, pp.
  4584--4588.

\bibitem{Romanov2019}
E.~Romanov and O.~Ordentlich, ``Above the nyquist rate, modulo folding does not
  hurt,'' \emph{IEEE Signal Processing Letters}, vol.~26, no.~8, pp.
  1167--1171, 2019.

\bibitem{Weiss2022}
A.~Weiss, E.~Huang, O.~Ordentlich, and G.~W. Wornell, ``Blind modulo
  analog-to-digital conversion,'' \emph{IEEE Transactions on Signal
  Processing}, vol.~70, pp. 4586--4601, 2022.

\bibitem{Azar2022}
E.~Azar, S.~Mulleti, and Y.~C. Eldar, ``Residual recovery algorithm for modulo
  sampling,'' in \emph{IEEE International Conference on Acoustics, Speech and
  Signal Processing (ICASSP)}, 2022, pp. 5722--5726.

\bibitem{Shah2023}
S.~B. Shah, S.~Mulleti, and Y.~C. Eldar, ``Lasso-based fast residual recovery
  for modulo sampling,'' in \emph{IEEE International Conference on Acoustics,
  Speech and Signal Processing (ICASSP)}, 2023, pp. 1--5.

\bibitem{Guo2023}
R.~Guo and A.~Bhandari, ``{ITER-SIS}: {R}obust unlimited sampling via iterative
  signal sieving,'' in \emph{IEEE International Conference on Acoustics, Speech
  and Signal Processing (ICASSP)}, 2023, pp. 1--5.

\bibitem{Azar2025}
E.~Azar, S.~Mulleti, and Y.~C. Eldar, ``Unlimited sampling beyond modulo,''
  \emph{Applied and Computational Harmonic Analysis}, vol.~74, p. 101715, 2025.

\bibitem{Patricio2025}
F.~P. Patricio, F.~Krahmer, and P.~Catala, ``Stable retrieval for unlimited
  sampling via adaptive local representations,'' in \emph{IEEE Statistical
  Signal Processing Workshop (SSP)}, 2025, pp. 181--185.

\bibitem{Kobayashi2026}
H.~Kobayashi and R.~Hayakawa, ``Reconstruction of piecewise-constant sparse
  signals for modulo sampling,'' 2026, arXiv:2602.16418.

\bibitem{Bhandari2020a}
A.~Bhandari and F.~Krahmer, ``{HDR} imaging from quantization noise,'' in
  \emph{IEEE International Conference on Image Processing (ICIP)}, 2020, pp.
  101--105.

\bibitem{Zhou2020}
C.~Zhou, H.~Zhao, J.~Han, C.~Xu, C.~Xu, T.~Huang, and B.~Shi, ``{UnModNet}:
  {L}earning to unwrap a modulo image for high dynamic range imaging,'' in
  \emph{Advances in Neural Information Processing Systems (NeurIPS)}, 2020, pp.
  1--12.

\bibitem{Monroy2026}
B.~Monroy and J.~Bacca, ``Deep lightweight unrolled network for high dynamic
  range modulo imaging,'' \emph{IEEE Transactions on Computational Imaging},
  vol.~12, pp. 406--415, 2026.

\bibitem{Ordonez2021}
L.~G. Ordo{\~n}ez, P.~Ferrand, M.~Duarte, M.~Guillaud, and G.~Yang, ``On
  full-duplex radios with modulo-{ADC}s,'' \emph{IEEE Open Journal of the
  Communications Society}, vol.~2, pp. 1279--1297, 2021.

\bibitem{Liu2023}
Z.~Liu, A.~Bhandari, and B.~Clerckx, ``{$\lambda$–MIMO}: {Massive MIMO} via
  modulo sampling,'' \emph{IEEE Transactions on Communications}, vol.~71,
  no.~11, pp. 6301--6315, 2023.

\bibitem{Liu2025}
------, ``Full-duplex beyond self-interference: {T}he unlimited sensing way,''
  \emph{IEEE Communications Letters}, vol.~29, no.~1, pp. 165--169, 2025.

\bibitem{Geng2023}
T.~Geng, F.~Ji, Pratibha, and W.~P. Tay, ``Modulo {EEG} signal recovery using
  transformer,'' in \emph{IEEE International Conference on Acoustics, Speech
  and Signal Processing (ICASSP)}, 2023, pp. 1--5.

\bibitem{Beckmann2022}
M.~Beckmann, A.~Bhandari, and F.~Krahmer, ``The modulo {R}adon transform:
  {T}heory, algorithms and applications,'' \emph{SIAM Journal on Imaging
  Sciences}, vol.~15, no.~2, pp. 455--490, 2022.

\bibitem{Beckmann2024}
M.~Beckmann, A.~Bhandari, and M.~Iske, ``Fourier-domain inversion for the
  modulo {R}adon transform,'' \emph{IEEE Transactions on Computational
  Imaging}, vol.~10, pp. 653--665, 2024.
  
\newpage
\bibitem{Beckmann2025}
M.~Beckmann and C.~Dittert, ``On an analytical inversion formula for the modulo
  {R}adon transform,'' in \emph{Scale Space and Variational Methods in Computer
  Vision (SSVM)}, 2025, pp. 56--69.

\bibitem{Feuillen2023}
T.~Feuillen, B.~Shankar, and A.~Bhandari, ``Unlimited sampling radar: Life
  below the quantization noise,'' in \emph{IEEE International Conference on
  Acoustics, Speech and Signal Processing (ICASSP)}, 2023, pp. 1--5.

\bibitem{Feuillen2025}
T.~Feuillen and A.~Bhandari, ``Quadrature radars naturally benefit from
  unlimited sensing,'' in \emph{IEEE Statistical Signal Processing Workshop
  (SSP)}, 2025, pp. 251--255.

\bibitem{Zhang2025}
Q.~Zhang, J.~Zhu, F.~Qu, and D.~W. Soh, ``One-bit-aided modulo sampling for
  {DOA} estimation,'' in \emph{IEEE International Radar Conference (RADAR)},
  2025, pp. 1--6.

\bibitem{Florescu2022}
D.~Florescu, F.~Krahmer, and A.~Bhandari, ``The surprising benefits of
  hysteresis in unlimited sampling: {T}heory, algorithms and experiments,''
  \emph{IEEE Transactions on Signal Processing}, vol.~70, pp. 616--630, 2022.

\bibitem{Florescu2022a}
D.~Florescu and A.~Bhandari, ``Unlimited sampling via generalized
  thresholding,'' in \emph{IEEE International Symposium on Information Theory
  (ISIT)}, 2022, pp. 1606--1611.

\bibitem{Florescu2025}
------, ``Multi-dimensional unlimited sampling and robust reconstruction,''
  \emph{Applied and Computational Harmonic Analysis}, vol.~79, p. 101796, 2025.

\bibitem{Beckmann2025a}
M.~Beckmann and J.~Jeschke, ``Orthogonal matching pursuit based reconstruction
  for modulo hysteresis operators,'' in \emph{IEEE International Conference on
  Sampling Theory and Applications (SampTA)}, 2025, pp. 1--5.

\bibitem{Guo2025a}
R.~Guo and A.~Bhandari, ``Unlocking off-the-grid sparse recovery with unlimited
  sensing: Simultaneous super-resolution in time and amplitude,'' \emph{IEEE
  Journal of Selected Topics in Signal Processing}, vol.~19, no.~8, pp.
  1808--1821, 2025.

\bibitem{Beckmann2022a}
M.~Beckmann and A.~Bhandari, ``{MR. TOMP}: {I}nversion of the modulo {R}adon
  transform {(MRT)} via orthogonal matching pursuit {(OMP)},'' in \emph{IEEE
  International Conference on Image Processing (ICIP)}, 2022, pp. 3748--3752.

\bibitem{Mallat1993}
S.~Mallat and Z.~Zhang, ``Matching pursuits with time-frequency dictionaries,''
  \emph{IEEE Transactions on Signal Processing}, vol.~41, no.~12, pp.
  3397--3415, 1993.

\bibitem{Zhang2018}
Y.~Zhang and G.~Sun, ``Stagewise arithmetic orthogonal matching pursuit,''
  \emph{International Journal of Wireless Information Networks}, vol.~25, pp.
  221--228, 2018.

\bibitem{Bhandari2022}
A.~Bhandari, ``Unlimited sampling with sparse outliers: Experiments with
  impulsive and jump or reset noise,'' in \emph{IEEE International Conference
  on Acoustics, Speech and Signal Processing (ICASSP)}, 2022, pp. 5403--5407.

\end{thebibliography}
\end{document}